 \tikzset{help lines/.style={step=#1cm,very thin, color=gray},
help lines/.default=.5} 
\tikzset{thick grid/.style={step=#1cm,thick, color=gray},
thick grid/.default=1} 
\newcommand{\aaa}{{\bf a}}
\newcommand{\bbb}{{\bf b}}
\newtheorem{thm}{Theorem}[section]
\newtheorem*{thm*}{Theorem}
\newtheorem{lem}[thm]{Lemma}
\newtheorem{cor}[thm]{Corollary}
\newtheorem{prop}[thm]{Proposition}
\newtheorem{conj}[thm]{Conjecture}
\newenvironment{customthm}[1]
  {\innercustomthm}
  {\endinnercustomthm}
\newenvironment{customcor}[1]
	{\innercustomcor}
	{\endinnercustomcor}
\theoremstyle{definition}
\newtheorem{defn}[thm]{Definition}
\theoremstyle{remark}
\newtheorem{rem}[thm]{Remark}
\numberwithin{equation}{section}
\newcommand{\vs}[1]{\vskip .#1 cm} 
\newcommand{\then}{\Rightarrow}
\newcommand{\ifff}{\Leftrightarrow}
\newcommand{\onto}{\twoheadrightarrow}
\DeclareMathOperator{\Hom}{Hom}%
\DeclareMathOperator{\Ext}{Ext}%
\DeclareMathOperator{\End}{End}%
\DeclareMathOperator{\undim}{\underline{dim}}
\newcommand{\field}[1]{\mathbb{#1}}
\newcommand{\ZZ}{\ensuremath{{\field{Z}}}}
\newcommand{\CC}{\ensuremath{{\field{C}}}}
\newcommand{\RR}{\ensuremath{{\field{R}}}}
\newcommand{\NN}{\ensuremath{{\field{N}}}}
\newcommand{\commentout}[1]{}
\newcommand{\cS}{\ensuremath{{\mathcal{S}}}}
\newcommand\vare{\varepsilon}
\title{Stability conditions for affine type A}
\author{P.J. Apruzzese}
\address{MB 0082, Brandeis University, Waltham, MA 02454}\email{papruzze@brandeis.edu}
\author{Kiyoshi Igusa}
\address{Department of Mathematics, Brandeis University, Waltham, MA 02454}\email{igusa@brandeis.edu}
\subjclass[2010]{16G20}
\begin{document}

\maketitle


\begin{abstract}
We construct maximal green sequences of maximal length for any affine quiver of type $A$. We determine which sets of modules (equivalently $c$-vectors) can occur in such sequences and, among these, which are given by a linear stability condition (also called a central charge). There is always at least one such maximal set which is linear. The proofs use representation theory and three kinds of diagrams shown in Figure \ref{Figure001}. Background material is reviewed with details presented in two separate papers \cite{PartI} and \cite{PartII}.
\end{abstract}



%
%

\section*{Introduction} 

This paper addresses the question of linearity of maximal green sequences of maximal length. This question originates in a conjecture by Reineke [R] in which he asks for a linear stability condition on a Dynkin quiver which makes all indecomposable modules stable. Isomorphism classes of such modules are in bijection with the positive roots of the underlying root system. Reineke showed that the sequence of stable positive roots corresponding to the stable modules of a linear stability condition gives a quantum dilogarithm identity and he wanted that identity to have one term for every positive root. Yu Qiu [Q] has shown that, for every Dynkin quiver, there exists an orientation of the quiver and a linear stability condition given by a central charge which makes all indecomposable modules stable. This had already been done in type $A_n$ with straight orientation by Reineke [R]. So, [Q] dealt with quivers of other Dynkin types.

It is very easy to see that there are nonlinear stability conditions (called ``maximal green sequences'') which make all positive roots stable, namely take all indecomposable modules going, depending on sign convention, either from left to right (Reineke's sign convention) or from right to left (our sign convention) in the Auslander-Reiten quiver of the path algebra. More precisely, we order the indecomposable modules in such a way that, for every irreducible map $A\to B$, $B$ comes before $A$. Thus the question is mainly about the linearity of the stability condition.

In this paper we prove Reineke's original conjecture in type $A_n$ with any orientation and we give a complete resolution to the extension of this question to quivers of type $\widetilde A_{n-1}$. Since there are infinitely many positive roots in that case, the corresponding problem is to find maximal green sequences of maximal finite length and to determine which are linear. Our results are the following.

Recall that $\widetilde A_{a,b}$, for positive integers $a,b$, denotes a cyclic quiver with $a$ arrow going clockwise and $b$ arrows going counterclockwise. For example, there are, up to isomorphism, two quivers of type $\widetilde A_{3,2}$ which we denote: (See \eqref{eq: quiver of type tilde A23} for the sign notation.)
\[
\xymatrixrowsep{10pt}\xymatrixcolsep{10pt}
\xymatrix{
&&&\bullet\ar[dl]\ar[rr] &&\bullet&&&&&&\bullet\ar[dl] &&\bullet\ar[rd]\ar[ll]\\
&\tilde A_{3,2}^{++-+-}:&\bullet &&&&\bullet\ar[lu]\ar[dll] && &\tilde A_{3,2}^{+++--}:& \bullet &&&&\bullet\ar[dll]\\
&&&&\bullet\ar[llu]&&&&&&&&\bullet\ar[llu]
	}
\]

\begin{customthm}{M1}\label{thmA: maximum length of MGS}
Maximal green sequences for any quiver of type $\widetilde A_{a,b}$ have maximum length
\[
	\binom{a+b}2+ab.
\]
\end{customthm}

For $b=1$ this is already known \cite{Kase}.

Theorem \ref{thmA: maximum length of MGS} is shown in two steps: In Theorem \ref{thm M2b} we construct maximal green sequences of this length and in Theorem \ref{thm M2a} we show that there are no maximal green sequences of greater length. Although maximal green sequences were originally defined combinatorially \cite{Keller}, we use the language of representation theory which we review in \ref{ss11}. We take a fixed field $K$ and, for any acyclic quiver $Q$, we take the path algebra $\Lambda=KQ$ of $Q$. We use the ``wall-crossing'' definition of a maximal green sequence from \cite{PartI}, \cite{BST} which we review in Section \ref{sec3}. A {\bf maximal green sequence} for a finite dimensional algebra $\Lambda$ is a finite sequence of indecomposable modules $M_1,\cdots,M_m$ for which there exists a ``green path'' $\gamma$ going through the walls $D(M_1),\cdots,D(M_m)$ in that order and no other walls. The set of modules $M_i$ in this sequence are called the {\bf stable modules} of the sequence.

Maximal green sequences for $Q$ are in bijection with those for $KQ$ given by the wall crossing definition in Section \ref{sec3} and the dimension vectors of the stable module $M_i$ are the $c$-vectors of the corresponding combinatorially defined maximal green sequence. (See \cite{PartI}).

For quivers of Dynkin type, the longest green path passes through all of the walls and all indecomposable modules are stable. For quivers of type $\widetilde A_{a,b}$ there are infinitely many indecomposable modules and each maximal green sequence makes only finitely many of them stable. In this paper we determine all possible sets of stable modules of the maximum size given in Theorem \ref{thmA: maximum length of MGS} above. This is summarized by the following two theorems.

\begin{customthm}{M2}
For every quiver of type $\widetilde A_{a,b}$ with $(a,b)\neq (2,2)$ there are exactly $ab$ possible sets of stable modules for the maximal green sequences of length $\binom{a+b}2+ab$.
\end{customthm}

These sets are denoted $\cS_{k\ell}$ where $\vare_k=+,\vare_\ell=-$ and $0< k<\ell<k+n\le 2n$. See Definition \ref{def: Skell}. For $(a,b)\neq(2,2)$ these sets are distinct. (Theorem \ref{thm M2}, Corollary \ref{cor M2}.) Since there are $a$ choices for $k$ and $b$ choices for $\ell$, there are $ab$ such sets.

\begin{customthm}{M3}
For the quivers $\widetilde A_{2,2}^{+-+-}$, resp. $\widetilde A_{2,2}^{++--}$, there are $2$, resp. $3$, possible sets of stable modules for the maximal green sequences of the maximum length which is $10$.
\end{customthm}

For $\widetilde A_{2,2}^{+-+-}$, $\cS_{12}=\cS_{34}$ and $\cS_{14}=\cS_{36}$ and, for $\widetilde A_{2,2}^{++--}$, $\cS_{13}=\cS_{24}$. (Proposition \ref{prop: Skl are not distinct for (2,2)}).

Returning to the linearity question, we prove first that Reineke's conjecture holds for quivers of type $A_n$ with any orientation.

\begin{customthm}{L1}[Corollary \ref{Reineke's conjecture for An}]
For a quiver of type $A_n$ with any orientation, there exists a standard linear stability condition making all indecomposable modules stable.
\end{customthm}

For every linear stability condition $Z$ (also called a {\bf central charge} \ref{central charge}) there is a linear green path $\gamma_Z:\RR\to \RR^n$ given by $\gamma_Z(t)=t\bbb-\aaa$. If there are only finitely many stable modules, these modules form a maximal green sequence and we call the stability condition a {\bf finite linear stability condition}. 

In the affine case $\widetilde A_{a,b}$, we show that the following analogue of Reineke's conjecture hold.

\begin{customthm}{L2}
For any quiver of type $\widetilde A_{a,b}$ there exists a finite linear stability condition for which the number of stable modules is $\binom{a+b}2+ab$.
\end{customthm}

Theorem \ref{thm: nonexistence of Z} describes which of the sets $\cS_{k\ell}$ are {linear}, i.e., realized by linear stability conditions. For example, $\cS_{k\ell}$ is linear when $|k-\ell|\le 2$. Theorem \ref{thm: nonexistence of Z} implies the following.

\begin{customthm}{L3}
For the quiver $\widetilde A_{a,b}^\vare$, if either $a$ or $b$ is $\le2$, every possible set of stable modules of maximal size is linear. Otherwise (when $a,b\ge3$) there is at least one orientation of the quiver (choice of $\vare$) for which one of the sets of stable modules is not linear.
\end{customthm}

The smallest nonlinear example is $\widetilde A_{3,3}^{+++---}$ where 1 of the $ab=9$ sets of stable modules is nonlinear. And, in fact, this example is the cause of all nonlinearity. For every nonlinear example, $+++---$ will be a subsequence of the sign pattern $\vare$ up to cyclic order.

By the Deletion Lemma \ref{deletion lemma}, maximal green sequences for $\widetilde A_{a,b}^\vare$ restrict to maximal green sequences for $\widetilde A_{a',b'}^{\vare'}$ for many subsequence $\vare'$ of $\vare$ (in particular, $a'\le a$, $b'\le b$). By Remark \ref{rem: stable set Sk}, this includes the exceptional case where $b=0$. Although ``$\widetilde A_{n,0}$'' has infinite representation type (being an oriented cycle), only modules of length $\le n$ occur in a maximal green sequence since longer modules are not ``bricks'' (See Proposition \ref{prop: D(M) for bricks M}). In this paper we also exclude modules of length $n$. Equivalently, we mod out $rad^{n-1}$, and we have one of the well-known cluster-tilted algebras of type $D_n$ for $n\ge4$ considered in \cite{PartII}. In Theorem \ref{thm: upper bound for MGS of Dn} and Corollary \ref{cor L4, proved} we show the following.

\begin{customcor}{L4}\label{cor: Dn tilde case}
For $\Lambda_n=KQ_n/rad^{n-1}$ the path algebra of the oriented $n$-cycle $Q_n$ modulo the relation $rad^{n-1}=0$, the longest maximal green sequence has length $\binom{n}2+n-1$, there are $n$ different sets of stable modules of this size and all of them are linear.
\end{customcor}

\begin{figure}[htbp]
\begin{center}
\begin{tikzpicture}
\begin{scope}[xshift=-5.5cm]
\clip (0.5,0) rectangle (4,5.5);
\draw[thick] (0,0) --(4,6) (1.5,0)--(3,4.5) (3,0)--(3,6);
\draw[thick] (0,4)--(4,0) (3,1)--(1,7);
\draw[thick,color=red] (2.12,1.88)--(2.23,3.34);
\draw (1,.8) node{\tiny$D(S_3)$};
\draw (1,3.5) node{\tiny$D(S_1)$};
\draw (3.4,2.7) node{\tiny$D(S_2)$};
\draw (2,.2) node{\tiny$D(P_3)$};
\draw (2.1,5) node{\tiny$D(P_2)$};
\draw[thick,->,color=green!80!black] (0,1.5)--(3.5,1.5);
\draw[color=green!80!black] (3.5,1.2) node{\small$\gamma$};
\end{scope}
\begin{scope}[yshift=2cm]
\draw[thick,color=red] (-.3,0)--(3,0);
\draw[fill,color=gray!20!white] (2,2)--(-.3,0)--(1,.5)--(2,2)--(3,0)--(1,.5);
\draw[thick] (2,2)--(-.3,0)--(1,.5)--(2,2)--(3,0)--(1,.5);\draw[fill] (-.3,0) circle[radius=2pt];
\draw[fill,color=blue] (2,2) circle[radius=2pt];
\draw[fill,color=red] (1,0.5) circle[radius=2pt];
\draw[fill] (3,0) circle[radius=2pt];
\draw (.6,0.15) node{\tiny$S_1$};
\draw (2,0.45) node{\tiny$P_3$};
\draw (1.7,1.1) node{\tiny$S_2$};
\draw (2.7,1) node{\tiny$S_3$};
\draw (.6,1.1) node{\tiny$P_2$};
\draw[color=red] (1.5,-0.2) node{\tiny$I_2$};
\draw (1.3,-1.5) node{$A^{-+}: 1\rightarrow 2\leftarrow 3$};
\end{scope}
\begin{scope}[xshift=5cm,yshift=1cm]
\draw[thick] (0,0)--(4,3.5);
\draw[thick] (0,3)--(4,0);
\draw[thick,color=red] (0,1.7)--(4,2.2);
\draw[thick,color=blue] (0,2.7)--(4,2);
\draw[fill] (.51,2.62) circle[radius=1.5pt] node[above]{\tiny$S_3$};
\draw[fill] (1.5,1.9) circle[radius=1.5pt] node[above]{\tiny$P_3$};
\draw[fill] (2.27,2) circle[radius=1.5pt] node[below]{\tiny$S_1$};
\draw[fill] (2.55,2.25) circle[radius=1.5pt] node[above]{\tiny$P_2$};
\draw[fill] (3.35,2.1) circle[radius=1.5pt] node[below]{\tiny$S_2$};
\draw[thick,color=red] (1.85,1.62) circle[radius=1.5pt] node[below]{\tiny$I_2$};
\end{scope}
\end{tikzpicture}
\caption{Three diagrams for the same linear stability condition on a quiver of type $A_3$ with stable modules $S_3,P_3,S_1,P_2,S_2$. Each diagrams show $I_2$ to be unstable: (1) $\gamma$ does not pass through the wall $D(I_2)$ (in red), (2) the chord $I_2$ is outside the polygon and (3) the vertex $I_2$ has a red line above it.}
\label{Figure001}
\end{center}
\end{figure}
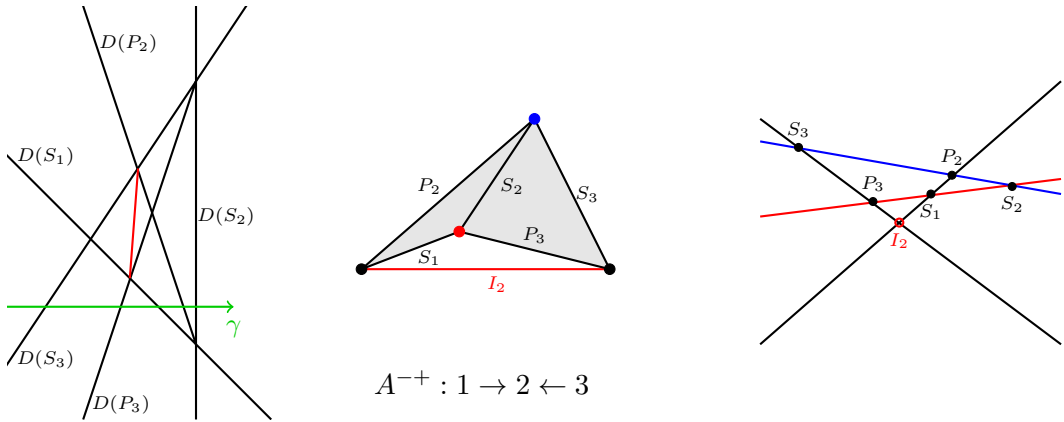

In order to prove these theorem we use the following three types of diagrams where (2) and (3) are always planar. (Figure \ref{Figure001} gives an example.)
\begin{enumerate}
\item Wall crossing diagrams. $M$ is stable when a green path $\gamma:\RR\to\RR^n$ passes through the interior of the wall $D(M)$. The stability condition given by $\gamma$ is linear when $\gamma$ is a straight line.
\item Chords in the ``stability polygon''. Certain chords represent stable modules in a linear stability condition. \item Wire diagrams. Stable modules are indicated by certain crossings of wires in the plane. When the wires are straight lines, this is a linear stability condition.
\end{enumerate}

In Section \ref{sec3} we use wall crossing diagrams to prove basic theorems about maximal green sequences, linear and nonlinear. Wire diagrams, introduced in Sections \ref{sec2}, \ref{sec4}, are used in Section \ref{sec5} to show that $\binom{a+b}2+ab$ is an upper bound for the maximum length of a maximal green sequence on any quiver of type $\tilde A_{a,b}$. Finally, in Section \ref{sec6}, chord diagrams, introduced in Section  \ref{sec1}, are used to realize this upper bound.

At the end of the paper (Sec \ref{sec7}) we give a summary of notation, definitions and constructions and a statement of the precise correspondence between there three kinds of diagrams.

%


%
%

\setcounter{section}0

\section{Chord diagrams for $A_n$}\label{sec1}

In this section we will review the representation theory of quivers, give the precise statement of Reineke's conjecture and a proof of the conjecture in type $A_n$ using chord diagrams.

\subsection{Representations of quivers}\label{ss11}

Suppose that $Q$ is a {\bf quiver}, i.e., a finite oriented graph, with vertex set $Q_0=\{1,2,\cdots,n\}$, arrow set $Q_1$ and no oriented cycles. An important case is when $Q$ is a linear quiver, in which case we say it has {\bf type $A_n$}. For example,
\[
	Q:1\xrightarrow\alpha 2\xrightarrow\beta 3\xleftarrow\gamma 4
\]
is a quiver of type $A_4$.

There are $2^{n-1}$ possible orientations for the arrows in a quiver of type $A_n$. We specify the orientation with a {\bf sign function} which we define to be any mapping
\[
	\vare:[0,n]=\{0,1,2,\cdots,n\}\to \{-,0,+\},
\]
written $\vare(i)=\vare_i$, so that $\vare_i=0$ iff $i=0$ or $n$. In the corresponding linear quiver, denoted $A_n^\vare$, the $i$th arrow points left $i\leftarrow i+1$ when $\vare_i=+$ and right $i\to i+1$ when $\vare_i=-$. Thus, the example above is $A_4^{--+}$ where we drop the values $v_0=v_n=0$ from the notation.

A {\bf representation} $M$ of a quiver $Q$ over a field $K$ is defined to be a sequence of finite dimensional vector spaces $M_i$, $i=1,\cdots,n$ and linear maps $M_a:M_i\to M_j$ for every arrow $a:i\to j$ in $Q$. The {\bf dimension vector} of $M$ is
\[
	\undim M:=(\dim_KM_1,\cdots,\dim_KM_n).
\]
The {\bf dimension} of $M$ is the dot product
\[
	\dim_KM:=\sum \dim_KM_i=(1,1,\cdots,1)\cdot \undim M.
\]

Recall that the {\bf positive roots} of the Dynkin diagram $A_n$ are the integer vectors
\[
	\beta_{ij}:=e_{i+1}+e_{i+2}+\cdots+e_j=(0,\cdots,0,1,1,\cdots,1,0,\cdots,0)
\]
with $1$s in positions $i+1,i+2,\cdots,j$ for any $0\le i<j\le n$. These are the dimension vectors of the indecomposable representations of $A_n^\vare$ for any $\vare$.

A representation of an acyclic quiver $Q$ is equivalent to a finitely generated modules over the {\bf path algebra} $\Lambda=KQ$. See, e.g., \cite{ASS}. A representation is {\bf indecomposable} if it is indecomposable as a $\Lambda$-module. We say $N$ is a {\bf subrepresentation} or {\bf submodule} of $M$ if, considered as $\Lambda$-modules, $N$ is a submodule of the module $M$. Equivalently, $N_i\subseteq M_i$ for all $i\in Q_0$ and $M_a(N_i)\subseteq N_j$ for all $a:i\to j$ in $Q_1$.

The following proposition is an easy exercise.

\begin{prop}
As representations of the quiver $A_n^\vare$, $M_{ij}$ is a subrepresentation of $M_{pq}$ if and only if the following three conditions are satisfied.
\begin{enumerate}
\item $p\le i<j\le q$
\item Either $p=i$ or $\vare_i=-$
\item Either $j=q$ or $\vare_j=+$
\end{enumerate}
\end{prop}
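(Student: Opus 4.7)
The plan is to unfold what $M_{ij}$ and $M_{pq}$ look like explicitly and then test the submodule conditions $N_k \subseteq M_k$ and $M_a(N_i) \subseteq N_j$ vertex-by-vertex and arrow-by-arrow. Recall that $M_{ij}$ has $K$ in positions $i+1,\ldots,j$ and $0$ elsewhere, with the identity on $K$ along every interior arrow. The same description applies to $M_{pq}$.

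First I would dispose of condition (1), which is purely about vertex-wise containment of supports. The requirement $(M_{ij})_k \subseteq (M_{pq})_k$ at every vertex $k$ forces the interval $\{i+1,\ldots,j\}$ to be contained in $\{p+1,\ldots,q\}$, which is precisely $p\le i<j\le q$. Conversely, given (1), the vertex-wise inclusions hold by construction.

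Next I would examine compatibility with each arrow of $A_n^\vare$. An arrow between vertices $k$ and $k+1$ can only cause trouble at indices where $(M_{pq})$ and $(M_{ij})$ disagree, and under condition (1) this happens exactly at the two boundary indices: at vertex $i$ when $p<i$, and at vertex $j+1$ when $j<q$. All other arrows are safe: inside the support of $M_{ij}$ both modules agree so compatibility is automatic, and outside the support of $M_{pq}$ both sides of the inclusion are zero. At the left boundary, if $p<i$ the arrow between vertex $i$ and vertex $i+1$ restricted to $M_{pq}$ is the identity $K\to K$; for $M_{ij}$ to form a submodule, the image of $(M_{ij})_{i+1}=K$ under this arrow must land in $(M_{ij})_i=0$. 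This succeeds precisely when the arrow points from $i$ to $i+1$, i.e.\ when $\vare_i=-$, yielding condition (2). A symmetric analysis at the right boundary (vertex $j+1$, where $(M_{ij})_j=K$ but $(M_{ij})_{j+1}=0$) shows that the arrow must point from $j+1$ to $j$, i.e.\ $\vare_j=+$, giving condition (3).

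Combining these, (1), (2), (3) together are equivalent to the submodule condition. There is no real obstacle here: the only thing to keep straight is the sign convention ($\vare_k=+$ means $k\leftarrow k+1$, $\vare_k=-$ means $k\to k+1$) and the off-by-one indexing between the vertex labels $1,\ldots,n$ and the arrow labels $1,\ldots,n-1$. Once the boundary analysis is done carefully, the three conditions drop out independently.
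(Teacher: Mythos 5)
The paper states this proposition as ``an easy exercise'' and gives no proof, so there is nothing to compare against; your vertex-by-vertex and arrow-by-arrow verification is exactly the intended argument and is correct. In particular, you correctly isolate the only two arrows where the submodule condition can fail (between $i$ and $i+1$ when $p<i$, and between $j$ and $j+1$ when $j<q$) and match the required arrow directions to the sign convention, which yields conditions (2) and (3), while (1) is just containment of supports.
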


\subsection{Linear stability conditions}\label{central charge} We consider the dimension vectors of $\Lambda$-modules to be elements of $K_0\Lambda=\ZZ^n$. A {\bf central charge} on $\Lambda=KQ$ is defined to be an additive mapping
\[
	Z:K_0\Lambda\to\CC
\]
of the form
\[
	Z(x)=\aaa\cdot x+i\bbb\cdot x= r(x)e^{i\theta(x)}
\]
for fixed vectors $\aaa,\bbb\in\RR^n$ so that every coordinate $b_i$ of $\bbb$ is positive. We say that $Z$ is {\bf standard} if $\bbb=(1,1,\cdots,1)$. We also say that $\bbb$ is {\bf standard} in that case.

For a $\Lambda$-module $M$, the {\bf slope} of $M$ is then defined to be
\[
	\sigma_Z(M):=\frac{\aaa\cdot \undim M}{\bbb\cdot \undim M}=\cot\theta(M).
\]
In the standard case, $\bbb\cdot \undim M=\dim_KM$. The slope is undefined for $M=0$.

\begin{defn}\label{def: semistable}
A $\Lambda$-module $M$ is called $Z$-{\bf semistable} if \[
	\sigma_Z(M')\ge \sigma_Z(M)
\]
(equivalently, $\theta(M')\le \theta(M)$) for all nonzero submodules $M'\subseteq M$. $M$ is called $Z$-{\bf stable} if every proper submodule $M'\subset M$ has strictly larger slope: $\sigma_Z(M')> \sigma_Z(M)$.
\end{defn}

Because of this definition we often refer to $Z$ as a {\bf linear stability condition}. The problem is to determine the maximum finite number of stable modules given by a linear stability condition. It is an easy exercise to show that any $Z$-stable module is indecomposable.

Reineke's original conjecture states:

\begin{conj}\cite{R} For $Q$ a Dynkin quiver, there exist a standard linear stability condition making all indecomposable modules stable.
\end{conj}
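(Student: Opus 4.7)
The plan is to translate the conjecture into a finite system of open linear inequalities in $\aaa$, then to construct $\aaa$ using the geometry of the Auslander--Reiten quiver of $Q$. For a Dynkin quiver $Q$, the path algebra $KQ$ has only finitely many indecomposables (Gabriel's theorem), hence finitely many pairs $(M, M')$ with $M$ indecomposable and $M'$ a proper nonzero submodule. With $\bbb = (1,\ldots,1)$ standard, the stability requirement $\sigma_Z(M') > \sigma_Z(M)$ becomes
\[
\aaa \cdot \bigl(\dim_K M \cdot \undim M' - \dim_K M' \cdot \undim M\bigr) > 0,
\]
a strict open half-space condition on $\aaa$. Reineke's conjecture is equivalent to the non-emptyness of the intersection of these finitely many open half-spaces in $\RR^n$.

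To construct such an $\aaa$, I would use the AR quiver of $KQ$. The standard planar embedding assigns to each indecomposable $M$ a column coordinate $\kappa(M)\in\ZZ$ such that irreducible morphisms $M'\to M$ satisfy $\kappa(M')=\kappa(M)-1$. I would pick $\aaa$ so that the induced slope $\sigma_Z$ is, to leading order, a strictly \emph{decreasing} function of $\kappa$, with a small perturbation used to break ties between indecomposables in the same column. Concretely, one chooses $\aaa$ so that $\aaa\cdot\undim M \approx \dim_K M\cdot(\ell_0(M)-C\kappa(M))$ for a large constant $C>0$ and a generic vertex-level function $\ell_0$. The existence of such a genuine linear form $\aaa$ reduces to a finite exercise, since $\undim M$, $\dim_K M$ and $\kappa(M)$ take only finitely many values.

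The core AR-theoretic input is that if $M'\hookrightarrow M$ is an irreducible monomorphism between indecomposables, then $\kappa(M')<\kappa(M)$, so the leading term of $\sigma_Z$ already gives $\sigma_Z(M')>\sigma_Z(M)$ with room for the perturbation. A general proper submodule $M'\subsetneq M$ with $M$ indecomposable need not be indecomposable, but decomposing $M'=\bigoplus M'_j$ into summands makes $\sigma_Z(M')$ a convex combination of the $\sigma_Z(M'_j)$ weighted by $\dim_K M'_j$; hence it suffices to establish $\sigma_Z(M'_j)>\sigma_Z(M)$ for each summand. By induction on $\dim_K M$, this reduces to the case where $M'_j$ is indecomposable and embeds into $M$ via a chain of irreducible monomorphisms in the AR quiver, which is covered by the previous sentence.

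The main obstacle is calibrating the perturbation so that the within-column inequalities hold simultaneously with the large cross-column ones. For types $A_n$ and $D_n$, these can be handled combinatorially using the chord and wire diagrams developed elsewhere in this paper; for $E_6, E_7, E_8$ the combinatorics of submodule inclusions is more intricate, and a uniform argument via the spectral decomposition of the Coxeter element on the root lattice, as in Qiu's original approach, is the cleanest route. This exceptional-type verification, together with the additional complication that indecomposable submodules of a given indecomposable need not always embed via purely monomorphic chains in the AR quiver, is the genuinely hard part of the proof.
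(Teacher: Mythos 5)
The statement you are addressing is presented in the paper as a \emph{conjecture} (Reineke's), not a theorem: the paper explicitly records that Lutz Hille has claimed the conjecture fails in type $E_6$, and it proves only the type $A_n$ case (Corollary \ref{Reineke's conjecture for An}). That proof is short and explicit: with $\bbb=(1,\dots,1)$, place the dual vertices $p_i$ on a circle, positive vertices on the upper semicircle and negative on the lower, so that the stability polygon $C(Z)$ is convex with no three vertices collinear; then every chord $V_{ij}$ lies in $C(Z)$ and every $M_{ij}$ is stable by Theorem \ref{thm: Mij Z-semistable iff Vij in C(Z)}. Your plan aims at all Dynkin types, i.e.\ at a statement that is probably false as written, and your closing appeal to Qiu's work does not repair this: Qiu drops the requirement that $Z$ be standard and produces only one good orientation per Dynkin diagram, whereas the conjecture demands a standard $Z$ for every orientation.

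The construction itself also has a genuine gap. The reduction to finitely many open half-space conditions on $\aaa$ is fine, but the proposed $\aaa$ with $\aaa\cdot\undim M \approx \dim_K M\cdot\bigl(\ell_0(M)-C\kappa(M)\bigr)$ is heavily overdetermined: there are many more indecomposables than the rank $n$ of $K_0$, so one cannot prescribe $\aaa\cdot\undim M$, even approximately with errors smaller than the gaps required, independently for each $M$. Whether the Auslander--Reiten order (the paper's ``easy'' nonlinear maximal green sequence from the introduction) can be realized by a \emph{linear} functional is precisely the content of the problem, not a ``finite exercise''; the Pappus example (Theorem \ref{thm: Pappus Thm example}) shows that closely related orderings of stable modules can fail to be linearizable even in type $A_5$. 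Finally, your inductive step needs every indecomposable submodule of an indecomposable to be reached by a chain of irreducible monomorphisms in the AR quiver, which is false in general and which you yourself flag as unresolved; together with the uncalibrated perturbation, this leaves the heart of the argument unproved.
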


Lutz Hille has claimed, privately to the second author, that this conjecture is not true in type $E_6$. Yu Qiu has shown \cite{Q} that this conjecture holds for at least one orientation of each Dynkin diagram if we drop the restriction that the linear stability condition should be \emph{standard}. We will prove the original conjecture for $A_n^\vare$ for any sign function $\vare$ using chord diagrams.

\subsection{Chord diagrams for type $A_n$}\label{dual vertices}

For a central charge $Z:K_0(KA_n^\vare)\to\CC$ given by $Z(x)=\aaa\cdot x+i\bbb\cdot x$, we will construct a ``stability polygon'' $C(Z)\subseteq \RR^2$ which will visually display which roots $\beta_{ij}$ are stable. This ``polygon'' might be degenerate, i.e., one-dimensional.

The {\bf vertices} of the stability polygon (also called {\bf dual vertices} of the quiver) are given by
\[
	p_i=(x_i,y_i):=(b_1+\cdots+b_i,a_1+\cdots+a_i)
\]
for $i=0,\cdots,n$. In particular $p_0=(0,0)$. For standard $Z$, $x_i=i$ for all $i$. The {\bf sign} of $p_i$ is $\vare_i$ and we sometimes write $p_k^+$ (or $p_\ell^-$) to mean \emph{$p_k$ which has sign $\vare_k=+$} (or: \emph{$p_\ell$ with $\vare_\ell=-$}). We say that the vertex $p_k$ is {\bf positive}, resp {\bf negative}, if $\vare_k=+$, resp $\vare_k=-$. {\bf Nonnegative} means either positive of equal to $p_0$ or $p_n$ which have sign $0$. {\bf Nonpositive} is similarly defined. 

\begin{lem} The slope of the line segment $V_{ij}:=\overline{p_ip_j}$ is equal to the slope of $M_{ij}$:
\[
	\text{slope }V_{ij}=\frac{a_{i+1}+\cdots+a_j}{b_{i+1}+\cdots+b_j}=\frac{\aaa\cdot\beta_{ij}}{\bbb\cdot\beta_{ij}}=\sigma_Z(M_{ij}).
\]
\end{lem}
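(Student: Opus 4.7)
This is a direct unwinding of definitions, so the plan is essentially to verify three equalities in sequence, with no substantive obstacle. I will write each of the three expressions as a direct computation starting from the coordinates of $p_i$ and $p_j$ and the definition of $\beta_{ij}$.

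First I would compute the slope of the segment $V_{ij}=\overline{p_ip_j}$ directly from the coordinate formula $p_i=(b_1+\cdots+b_i,\,a_1+\cdots+a_i)$. Since $i<j$, the change in $x$-coordinate is $x_j-x_i=(b_1+\cdots+b_j)-(b_1+\cdots+b_i)=b_{i+1}+\cdots+b_j$, and this is strictly positive because every $b_k$ is positive (by the definition of a central charge), so the slope is well-defined. The change in $y$-coordinate is $y_j-y_i=a_{i+1}+\cdots+a_j$. Taking the ratio gives the first equality.

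Next I would rewrite the numerator and denominator in terms of dot products with $\beta_{ij}$. Since $\beta_{ij}=e_{i+1}+e_{i+2}+\cdots+e_j$, we have $\aaa\cdot\beta_{ij}=a_{i+1}+\cdots+a_j$ and $\bbb\cdot\beta_{ij}=b_{i+1}+\cdots+b_j$; substituting gives the second equality.

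Finally, I would note that the indecomposable $M_{ij}$ has dimension vector $\undim M_{ij}=\beta_{ij}$ (this is the identification of positive roots with isomorphism classes of indecomposables recalled in \ref{ss11}). The definition of slope in \ref{central charge} then gives $\sigma_Z(M_{ij})=\aaa\cdot\undim M_{ij}/\bbb\cdot\undim M_{ij}=\aaa\cdot\beta_{ij}/\bbb\cdot\beta_{ij}$, which is the third equality. There is no hard step; the only thing worth flagging is the need to cite positivity of the $b_k$ to ensure the slope is defined and finite.
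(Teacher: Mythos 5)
Your proof is correct and is exactly the routine computation the paper has in mind: the paper states this lemma without proof, since the displayed chain of equalities is itself the verification, namely $x_j-x_i=b_{i+1}+\cdots+b_j=\bbb\cdot\beta_{ij}$ and $y_j-y_i=a_{i+1}+\cdots+a_j=\aaa\cdot\beta_{ij}$, combined with $\undim M_{ij}=\beta_{ij}$ and the definition of $\sigma_Z$. Your remark that positivity of the $b_k$ guarantees the slope is finite is a sensible addition but not a substantive difference.
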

The line segments $V_{ij}$ will be called {\bf chords} of the stability polygon $C(Z)$ defined below.

In the sequel we will use the words ``\emph{above}'' and ``\emph{below}'' to refer to relative position in the plane. Thus, a point $(x_0,y_0)$ is {\bf above}, resp. {\bf below}, a subset $S\subseteq \RR^2$ if $S$ contains a point $(x_0,z)$ with $y_0>z$, resp. $y_0<z$. We use ``\emph{higher}'' and ``\emph{lower}'' when referring only to the difference in the $y$-coordinates.

\begin{thm}\label{thm: stability of chords}
The module $M_{ij}$ is $Z$-semistable if and only if the following conditions hold.
\begin{enumerate}
\item For all $i<k<j$ with $\vare_k=+$, the point $p_k^+$ lies on or above the chord $V_{ij}$.
\item For all $i<\ell<j$ with $\vare_\ell=-$, the point $p^-_\ell$ lies on or below the chord $V_{ij}$.
\end{enumerate}
$M_{ij}$ is $Z$-stable if and only if it is $Z$-semistable and $p_i,p_j$ are the only vertices on $V_{ij}$.
\end{thm}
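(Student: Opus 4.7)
The plan is to translate $Z$-(semi)stability of $M_{ij}$ into a geometric statement about the chord $V_{ij}$, using the slope lemma together with the preceding proposition characterizing subrepresentations. By that proposition, a submodule of $M_{ij}$ is any $M_{pq}$ with $i \le p < q \le j$ such that $p = i$ or $\vare_p = -$, and $q = j$ or $\vare_q = +$. By the slope lemma, the semistability inequality $\sigma_Z(M_{pq}) \ge \sigma_Z(M_{ij})$ is nothing but $\mathrm{slope}(V_{pq}) \ge \mathrm{slope}(V_{ij})$, so throughout the argument stability will reduce to comparing slopes of chords.

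For the forward direction of semistability, given $k$ with $i < k < j$ and $\vare_k = +$, the module $M_{ik}$ is a submodule (the conditions $p = i$ and $\vare_k = +$ both hold), so $\mathrm{slope}(V_{ik}) \ge \mathrm{slope}(V_{ij})$. Since $x_i < x_k < x_j$ (as all $b_r > 0$), this inequality is equivalent to $p_k$ lying on or above the chord $V_{ij}$, which is condition (1). Symmetrically, applying the submodule $M_{\ell j}$ when $\vare_\ell = -$ yields condition (2).

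For the reverse direction, assume (1) and (2) and let $M_{pq} \subseteq M_{ij}$ be any submodule. The submodule condition on the left endpoint forces $p_p$ to be either the endpoint $p_i$ or a negative vertex in the interior, so by (2) or by being an endpoint, $p_p$ lies on or below $V_{ij}$; symmetrically $p_q$ lies on or above. The elementary geometric fact that a chord joining a point weakly below a reference line to a point weakly above it has slope at least that of the line (proved by writing both $y$-coordinates against the affine function whose graph is the line through $V_{ij}$) then gives $\mathrm{slope}(V_{pq}) \ge \mathrm{slope}(V_{ij})$, establishing semistability.

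For the stability clause, inspection of the reverse direction shows $\mathrm{slope}(V_{pq}) = \mathrm{slope}(V_{ij})$ exactly when both $p_p$ and $p_q$ lie on the line extending $V_{ij}$; since the relevant $x$-coordinates all sit in $[x_i,x_j]$, this is the same as both lying on the chord $V_{ij}$ itself. A proper submodule has $(p,q) \ne (i,j)$, hence at least one of $p, q$ is strictly between $i$ and $j$, producing an interior vertex on $V_{ij}$; conversely, any interior vertex $p_k \in V_{ij}$ with $\vare_k = +$ (resp.\ $\vare_k = -$) exhibits $M_{ik}$ (resp.\ $M_{kj}$) as a proper submodule of equal slope. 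Hence strict inequality for every proper submodule is equivalent to $p_i, p_j$ being the only vertices on $V_{ij}$. The only real bookkeeping hazard is keeping the sign conventions (positive $\leftrightarrow$ above, negative $\leftrightarrow$ below) and the boundary cases $p = i$ or $q = j$ aligned with the submodule conditions from the proposition; once that dictionary is set up, each implication is immediate.
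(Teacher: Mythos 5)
Your proof is correct and follows essentially the same route as the paper: both translate the slope comparison $\sigma_Z(M_{pq})\ge\sigma_Z(M_{ij})$ into the position of $p_k$, $p_\ell$ relative to the chord $V_{ij}$ via the slope lemma, using $M_{ik}$ and $M_{\ell j}$ for the forward direction and the observation that a chord running from a point (weakly) below $V_{ij}$ to a point (weakly) above it has slope at least that of $V_{ij}$ for the converse. Your write-up is slightly more uniform (handling all submodules $M_{pq}$ at once rather than case-splitting into $M_{ik}$, $M_{\ell j}$, $M_{\ell k}$) and makes explicit the semistable/stable distinction and the equality analysis behind the final clause, which the paper leaves implicit.
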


\begin{proof} We discuss only the stable case. The semistable case is similar.

($\then$) Suppose $M_{ij}$ is $Z$-stable. Since $M_{ik}$ and $M_{\ell j}$ are submodules of $M_{ij}$ for $\vare_k=+,\vare_\ell=-$ the slopes of the chords $V_{ik}$, $V_{\ell j}$ must be greater than the slope of $V_{ij}$. This holds if and only if $p_k$ is above $V_{ij}$ and $p_\ell$ is below $V_{ij}$. Thus (1) and (2) hold.

($\Leftarrow$) Given (1) and (2), we have seen that submodules of $M_{ij}$ of the form $M_{ik},M_{\ell j}$ will have slope greater than the slope of $M_{ij}$. The remaining indecomposable submodules are $M_{\ell k}$. But $V_{\ell k}$ has slope greater than that of $V_{ij}$ since it starts at the point $p_\ell^-$ below $V_{ij}$ and ends at $p_k^+$ above $V_{ij}$. Thus $M_{ij}$ is $Z$-stable. (See Figure \ref{Fig chords}.)
\end{proof}

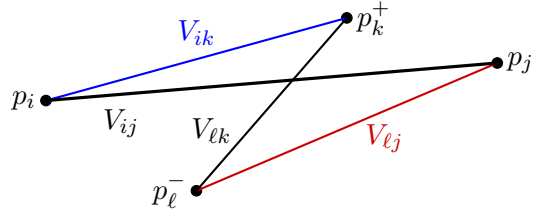
\begin{figure}[htbp]
\begin{center}
\begin{tikzpicture}
\coordinate (I) at (0,1.2);
\coordinate (L) at (2,0);
\coordinate (K) at (4,2.3);
\coordinate (J) at (6,1.7);
\foreach \x in {I,L,K,J}\draw[fill] (\x) circle[radius=2pt];
\draw[thick, color=blue] (I)--(K) (2,2.1) node{$V_{ik}$};
\draw[thick, color=red!80!black] (L)--(J) (4.5,.7) node{$V_{\ell j}$};
\draw[very thick] (I)--(J) (1,.9) node{$V_{ij}$};
\draw[thick] (L)--(K) (2.2,.8) node{$V_{\ell k}$};
\draw (I) node[left]{$p_i$};
\draw (L) node[left]{$p_\ell^-$};
\draw (K) node[right]{$p_k^+$};
\draw (J) node[right]{$p_j$};
\end{tikzpicture}
\caption{The chords $V_{ik}$, $V_{\ell k}$ and $V_{\ell j}$ have slope greater than that of $V_{ij}$ if $p_k^+$ is above and $p_\ell^-$ is below the chord $V_{ij}$.}
\label{Fig chords}
\end{center}
\end{figure}

We will reformulate this theorem in terms of a polygon $C(Z)$ whose vertices are the points $p_i$, $i=0,\cdots,n$. The main property of $C(Z)$ will be that $M_{ij}$ is $Z$-semistable if and only if $V_{ij}\subseteq C(Z)$. We define the {\bf stability polygon} $C(Z)$ to be the intersection $C(Z)=C^+(Z)\cap C^-(Z)$ where $C^+(Z),C^-(Z)$ are defined below. Figure \ref{Fig: Polygon example} gives an example.

Let $P^+=\{p_{k}\,:\, \vare_{k}\ge0\}$, be the set of all nonnegative vertices. Number the elements of $P^+$ as $p_{k_i}$ where $0=k_0<k_1<\cdots<k_m=n$. In Figure \ref{Fig: Polygon example}, these are $k_i=0,1,3,8$. For every pair of consecutive elements $p_{k_{i-1}},p_{k_i}$, Let $C_i^+(Z)$ be union of the convex hull of the points $p_j$ for all $k_{i-1}\le j\le k_i$ with the set of all points below this convex hull. Let $C^+(Z)=\bigcup C^+_i(Z)$ be the union of these sets. In the example in Figure \ref{Fig: Polygon example}, $C^+(Z)$ is the union of three sets: $C^+_1(Z)$ is the chord $V_{01}$ and points below, $C^+_2(Z)$ is the chord $V_{13}$ and points below and $C^+_3(Z)$ is the union of the two chords $V_{34}$ and $V_{48}$ and the points below these two chords, i.e., $C^+(Z)$ is the blue curve and everything below the blue curve.

\begin{lem}\label{lem: property of C+(Z)}
For any $0\le i<j\le n$, the chord $V_{ij}$ lies in $C^+(Z)$ if and only if $p_k^+$ lies on or above $V_{ij}$ for all positive vertices $p_k^+$ between $p_i$ and $p_j$, i.e., so that $i<k<j$.
\end{lem}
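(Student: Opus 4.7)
The plan is to characterize $C^+(Z)$ as the subgraph of a piecewise linear function $f$ on $[0,x_n]$ and then reduce $V_{ij}\subseteq C^+(Z)$ to a condition at finitely many vertices. By construction, on each interval $[x_{k_{s-1}},x_{k_s}]$ between consecutive nonnegative vertices the set $C_s^+(Z)$ is the convex hull of $\{p_{k_{s-1}},\ldots,p_{k_s}\}$ together with everything below, i.e., the subgraph of their upper concave envelope $f_s$. Gluing these envelopes gives $f$, and the crucial feature is that $f(x_{k_s})=y_{k_s}$ at every joint, since $p_{k_s}$ is automatically an extreme point of both adjacent upper convex hulls. In particular each $C_s^+(Z)$ is convex.

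Next I would slice $V_{ij}$ at the $x$-coordinates of all interior nonnegative vertices $p_{k_s}$ with $i<k_s<j$. Each piece of $V_{ij}$ then lies over a single interval $[x_{k_{s-1}},x_{k_s}]$, so $V_{ij}\subseteq C^+(Z)$ becomes equivalent to each piece lying in the corresponding convex set $C_s^+(Z)$; by convexity this reduces to checking its two endpoints. The genuine endpoints $p_i$ and $p_j$ are among the defining points of their enclosing pieces and so lie in $C^+(Z)$ automatically. The remaining endpoints sit over the interior joints $x_{k_s}$ with heights $V_{ij}(x_{k_s})$, and the condition $V_{ij}(x_{k_s})\le f(x_{k_s})=y_{k_s}$ is exactly the statement that $p_{k_s}$ lies on or above the chord $V_{ij}$.

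Finally I would reconcile \emph{nonnegative interior vertex} with the \emph{positive} wording of the lemma: since $0\le i<k_s<j\le n$ forces $0<k_s<n$, and the only vertices with $\vare=0$ are $p_0$ and $p_n$, every interior nonnegative $p_{k_s}$ is strictly positive, matching the statement. The converse follows by contrapositive from the same analysis: if some positive $p_k^+$ with $i<k<j$ lies strictly below $V_{ij}$, then at $x=x_k$ the chord lies strictly above $f(x_k)=y_k$, so $V_{ij}$ escapes $C^+(Z)$.

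I do not anticipate any serious obstacle. The only care needed is in verifying that $f$ is well-defined at the joining points and that each $C_s^+(Z)$ is genuinely convex, which is immediate from the upper-concave-envelope description; after that, the argument is just the elementary fact that a line segment lies in a convex set as soon as its endpoints do.
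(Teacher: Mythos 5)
Your argument is correct and is essentially the paper's own proof in slightly different clothing: both reduce containment of $V_{ij}$ in $C^+(Z)$ to comparing its height with $y_k$ at the intermediate positive vertices, using that each $C_s^+(Z)$ is convex, downward-closed, and has upper boundary passing through the nonnegative vertices $p_{k_{s-1}},p_{k_s}$. The paper organizes the converse by erecting the piecewise linear curve $L$ through the positive vertices and showing $V_{ij}$ lies below it, whereas you slice $V_{ij}$ at those vertices and check endpoints of each piece; these are interchangeable forms of the same convexity argument.
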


\begin{proof}
($\then$) $C^+(Z)$ does not contain any of the points above a positive vertex $p_k^+$. Thus, if $p_k^+$ is below $V_{ij}$ then $V_{ij}$ cannot be contained in $C^+(Z)$. 

($\Leftarrow$) Suppose that each $p_k^+$ for $i<k<j$ lies on or above $V_{ij}$. Let $L$ be the piecewise linear curve going from $p_i$ to $p_j$, which goes through all positive vertices $p_k^+$ between $p_i$ and $p_j$ and which bends only at these positive vertices. We see that $L$, and thus all points below $L$, is contained in $C^+(Z)$. Since each vertex on $L$ lies on or above $V_{ij}$, the entire curve $L$ lies on or above $V_{ij}$. So, $V_{ij}$ is contained in $C^+(Z)$ as claimed.
\end{proof}

We define $C^-(Z)$ analogously to $C^+(Z)$: Let $P^-$ be the set of nonpositive vertices $p_\ell$. For any pair of consecutive vertices $p_{\ell_{i-1}},p_{\ell_i}$ in $P^-$, let $C_i^-(Z)$ be the union  of the convex hull of all vertices $p_j$ for $\ell_{i-1}\le j\le \ell_i$ and all points in $\RR^2$ above this convex hull. Let $C^-(Z)=\bigcup C^-_i(Z)$. Then we have the following Lemma analogous to Lemma \ref{lem: property of C+(Z)} above.

\begin{lem}\label{lem: property of C-(Z)}
For any $0\le i<j\le n$, the chord $V_{ij}$ lies in $C^-(Z)$ if and only if $p_\ell^-$ lies on or below $V_{ij}$ for all negative vertices $p_\ell^-$ so that $i<\ell<j$.\qed
\end{lem}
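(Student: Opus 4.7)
The plan is to mirror the proof of Lemma \ref{lem: property of C+(Z)} under the symmetry that interchanges the roles of ``positive vertex / above $V_{ij}$'' with ``negative vertex / below $V_{ij}$.'' The definition of $C^-(Z)$ is obtained from that of $C^+(Z)$ by reflecting vertically and replacing the set of nonnegative vertices $P^+$ with the set of nonpositive vertices $P^-$, so everything should go through formally.

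For the ($\Rightarrow$) direction, I would observe that by construction $C^-(Z)$ contains no point lying strictly below a nonpositive vertex $p_\ell$ at $x = x_\ell$, since such a vertex lies on the lower boundary of the relevant $C^-_k(Z)$. Hence if some $p_\ell^-$ with $i<\ell<j$ were strictly above $V_{ij}$, then the point of $V_{ij}$ at $x=x_\ell$ would lie strictly below $p_\ell^-$ and therefore outside $C^-(Z)$, contradicting $V_{ij}\subseteq C^-(Z)$. This gives the forward implication immediately.

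For the ($\Leftarrow$) direction, assume each $p_\ell^-$ with $i<\ell<j$ lies on or below $V_{ij}$. The analogue of the curve $L$ in the previous lemma is the piecewise linear curve $L$ from $p_i$ to $p_j$ passing through each such $p_\ell^-$ and bending only at these negative vertices. Then $L$, together with all points above it, is contained in $C^-(Z)$: on each $x$-subinterval cut out by two consecutive vertices of $L$, the corresponding chord sits on or above the lower boundary of the relevant $C^-_k(Z)$, by convexity. Since each bending vertex of $L$ lies on or below $V_{ij}$ and the endpoints of $L$ are the endpoints of $V_{ij}$, the entire curve $L$ lies on or below $V_{ij}$, so $V_{ij}$ lies on or above $L$ and therefore in $C^-(Z)$.

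The only step needing care is the claim that $L\subseteq C^-(Z)$ when one or both of $p_i,p_j$ is not itself nonpositive, so that an endpoint of $L$ falls strictly inside an interval between two consecutive members of $P^-$. As in the proof of Lemma \ref{lem: property of C+(Z)}, this is handled by running the same convex-hull argument restricted to each such $x$-subinterval: any chord joining two points of (or above) the relevant convex hull is itself in (or above) that hull. Beyond this symmetric bookkeeping, no new ideas are required.
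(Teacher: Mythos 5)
Your proposal is correct and is exactly the argument the paper intends: the lemma is stated with a \qed because its proof is the mirror image (swapping above/below and positive/negative) of the proof of Lemma \ref{lem: property of C+(Z)}, which is precisely what you carry out. The extra care you take with endpoints $p_i,p_j$ that are not nonpositive is handled the same way as in the $C^+(Z)$ case, since such an endpoint is still a vertex of the relevant convex hull, so no new issue arises.
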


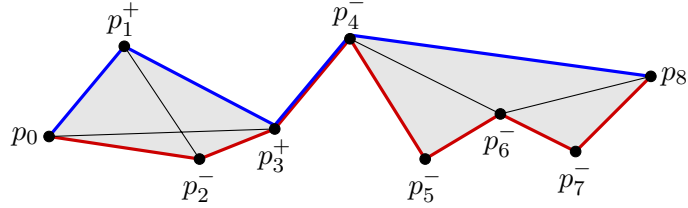
\begin{figure}[htbp]
\begin{center}
\begin{tikzpicture}
\coordinate (p0) at (0,1.7);
\coordinate (p1) at (1,2.9);
\coordinate (p2) at (2,1.4);
\coordinate (p3) at (3,1.8);
\coordinate (p3u) at (3,1.85);
\coordinate (p4) at (4,3);
\coordinate (p4u) at (4,3.05);
\coordinate (p5) at (5,1.4);
\coordinate (p6) at (6,2);
\coordinate (p7) at (7,1.5);
\coordinate (p8) at (8,2.5);
\draw[fill,color=gray!20!white] (p3u)--(p1)--(p0)--(p2)--(p3);
\draw[fill,color=gray!20!white] (p4)--(p5)--(p6)--(p7)--(p8)--(p4u);
\draw (p0) node[left]{$p_0$};
\draw (p1) node[above]{$p_1^+$};
\draw (p2) node[below]{$p_2^-$};
\draw (p3) node[below]{$p_3^+$};
\draw (p4) node[above]{$p_4^-$};
\draw (p5) node[below]{$p_5^-$};
\draw (p6) node[below]{$p_6^-$};
\draw (p7) node[below]{$p_7^-$};
\draw (p8) node[right]{$p_8$};
\draw[very thick, color=red!80!black] (p0)--(p2)--(p3)--(p4)--(p5)--(p6)--(p7)--(p8);
\draw[very thick, color=blue] (p0)--(p1)--(p3u)--(p4u)--(p8);
\foreach\x in {p0,p1,p2,p3,p4,p5,p6,p7,p8} 
\draw[fill] (\x) circle [radius=2pt];
\draw (p0)--(p3) (p1)--(p2) (p4)--(p6)--(p8);
\end{tikzpicture}
\caption{The stability polygon $C(Z)=C^+(Z)\cap C^-(Z)$ consists of the two shaded regions and the chord $V_{34}=\overline{p_3p_4}$. The blue lines and points below the blue lines form $C^+(Z)$. The red lines and points above them form $C^-(Z)$. The 14 chords in $C(Z)$ represent the 14 $Z$-stable modules. Also, $M_{47}$ is $Z$-semistable since $p_6^-$ lies on the chord $V_{47}$.}
\label{Fig: Polygon example}
\end{center}
\end{figure}

\begin{thm}\label{thm: Mij Z-semistable iff Vij in C(Z)}
Given a central charge $Z:KA_n^\vare\to\CC$, the indecomposable $KA_n^\vare$-module $M_{ij}$ is $Z$-semistable if and only if the chord $V_{ij}$ lies in the stability polygon $C(Z)=C^+(Z)\cap C^-(Z)$. $M_{ij}$ is $Z$-stable if and only if it is $Z$-stable and $V_{ij}$ has no internal vertices.
\end{thm}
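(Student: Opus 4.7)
The plan is to observe that this theorem is essentially the direct conjunction of Theorem \ref{thm: stability of chords} with Lemmas \ref{lem: property of C+(Z)} and \ref{lem: property of C-(Z)}, so the proof is mostly a matter of assembling the pieces correctly.

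First I would unwind the definition: $V_{ij}\subseteq C(Z)=C^+(Z)\cap C^-(Z)$ if and only if $V_{ij}\subseteq C^+(Z)$ and $V_{ij}\subseteq C^-(Z)$. By Lemma \ref{lem: property of C+(Z)}, the first containment is equivalent to the condition that $p_k^+$ lies on or above $V_{ij}$ for every positive vertex $p_k^+$ with $i<k<j$, which is exactly condition (1) of Theorem \ref{thm: stability of chords}. Dually, Lemma \ref{lem: property of C-(Z)} translates the second containment into condition (2) of the same theorem. So the two containments together are equivalent to conditions (1) and (2) holding simultaneously, and Theorem \ref{thm: stability of chords} says this is equivalent to $M_{ij}$ being $Z$-semistable. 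This proves the first part.

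For the stability statement, I would apply the stable case of Theorem \ref{thm: stability of chords}, which asserts that $M_{ij}$ is $Z$-stable if and only if $M_{ij}$ is $Z$-semistable and $p_i,p_j$ are the only vertices lying on $V_{ij}$. An \emph{internal vertex} of $V_{ij}$ is precisely a vertex $p_k$ with $i<k<j$ lying on the chord, so this is the statement that $V_{ij}$ has no internal vertices. Combined with the equivalence of $Z$-semistability and $V_{ij}\subseteq C(Z)$ established above, this gives exactly the claim.

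The main (minor) thing to check carefully is that the "on or above / on or below" inequalities in Lemmas \ref{lem: property of C+(Z)} and \ref{lem: property of C-(Z)} correspond to the weak inequality in the semistable version of Theorem \ref{thm: stability of chords}, while upgrading to strict inequality in the stable case corresponds to excluding internal vertices from $V_{ij}$. Since a positive vertex lies on $V_{ij}$ exactly when it fails to be strictly above, and similarly for negative vertices, this matchup is clean and the proof is complete.
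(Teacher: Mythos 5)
Your proposal is correct and takes exactly the paper's route: the paper's own proof is the one-line observation that the theorem "follows immediately from Theorem \ref{thm: stability of chords} and Lemmas \ref{lem: property of C+(Z)}, \ref{lem: property of C-(Z)}," and you have simply spelled out that assembly in detail, including the correct matching of weak inequalities to semistability and the exclusion of internal vertices to stability. (You also implicitly repair the evident typo in the statement, where the second ``$Z$-stable'' should read ``$Z$-semistable.'')
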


\begin{proof}
This follows immediately from Theorem \ref{thm: stability of chords} and Lemmas \ref{lem: property of C+(Z)}, \ref{lem: property of C-(Z)} above.
\end{proof}

\subsection{Reineke's conjecture}

We can now prove Reineke's conjecture for $A_n$ with any orientation.

\begin{cor}[Reineke's conjective for $A_n$]\label{Reineke's conjecture for An}
For a quiver $A_n^\vare$ with any sign function $\vare$, there is a standard linear stability condition making all indecomposable modules stable.
\end{cor}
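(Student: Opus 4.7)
The plan is to apply Theorem \ref{thm: stability of chords} and exhibit one explicit standard central charge under which every indecomposable $M_{ij}$ is stable. Take $\bbb=(1,1,\ldots,1)$, so the dual vertices are $p_k=(k,y_k)$ with $y_k=a_1+\cdots+a_k$, and define
\[
y_k \;=\; s_k\cdot k(n-k),\qquad s_k=\begin{cases}+1&\text{if }\vare_k=+,\\-1&\text{if }\vare_k=-,\\0&\text{if }k=0\text{ or }k=n.\end{cases}
\]
Equivalently, set $a_k=y_k-y_{k-1}$. With this choice $y_0=y_n=0$, every positive vertex $p_k^+$ sits at height $f(k):=k(n-k)>0$, and every negative vertex $p_k^-$ sits at height $-f(k)<0$, so the vertex sign and the sign of its $y$-coordinate agree on all internal vertices.

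The key observation is that $f(k)=k(n-k)$ is strictly concave at every integer triple. A direct expansion gives the identity
\[
f(k)(j-i)-f(i)(j-k)-f(j)(k-i)\;=\;(j-i)(k-i)(j-k)\;>\;0
\]
for all integers $0\le i<k<j\le n$, so $f(k)>[f(i)(j-k)+f(j)(k-i)]/(j-i)$. For any chord $V_{ij}$ let
\[
L_{ij}(k)\;:=\;\frac{(j-k)\,y_i+(k-i)\,y_j}{j-i}
\]
denote the height of $V_{ij}$ at $x=k$. I would then verify conditions (1) and (2) of Theorem \ref{thm: stability of chords} in strict form. Since $y_m\le f(m)$ for all $m$, the concavity identity yields $L_{ij}(k)<f(k)$; hence if $\vare_k=+$ then $y_k=f(k)>L_{ij}(k)$, so $p_k$ is strictly above $V_{ij}$. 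Symmetrically, $y_m\ge -f(m)$ gives $L_{ij}(k)>-f(k)$, so if $\vare_k=-$ then $y_k=-f(k)<L_{ij}(k)$, i.e.\ $p_k$ is strictly below $V_{ij}$. Both inequalities being strict also rules out $p_k$ lying \emph{on} $V_{ij}$, so $V_{ij}$ has no interior vertex and Theorem \ref{thm: stability of chords} shows that $M_{ij}$ is $Z$-stable.

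Since the modules $M_{ij}$ for $0\le i<j\le n$ exhaust the indecomposables of $KA_n^\vare$ and $\bbb=(1,\ldots,1)$ is standard, this proves the corollary. The only step that requires any cleverness is the choice of the concave profile $k\mapsto k(n-k)$; once that is fixed, every check is a one-line application of strict concavity together with the trivial bounds $-f(m)\le y_m\le f(m)$, so I do not anticipate a real obstacle.
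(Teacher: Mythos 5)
Your proof is correct and follows essentially the same route as the paper: both place the positive dual vertices on an upper concave arc and the negative ones on a lower convex arc through $p_0=(0,0)$ and $p_n=(n,0)$, then invoke the chord criterion of Theorem \ref{thm: stability of chords}. The paper inscribes the vertices in the circle of radius $n/2$ centered at $(n/2,0)$ and appeals to convexity of the resulting stability polygon, whereas you use the parabolas $y=\pm x(n-x)$ and verify the strict chord inequalities directly via the identity $f(k)(j-i)-f(i)(j-k)-f(j)(k-i)=(j-i)(k-i)(j-k)$; this is a cosmetic difference only.
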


\begin{proof}
By Theorem \ref{thm: Mij Z-semistable iff Vij in C(Z)} above it suffices to find a standard central charge $Z$ (with $\bbb=(1,1,\cdots,1)$) so that the stability polygon $C(Z)$ is convex and so that no three vertices are collinear. Such a central charge is given by inscribing the stability polygon $C(Z)$ in the circle of radius $n/2$ centered at $(n/2,0)$ and letting the positive vertices $p_k^+$ lie on the upper semi-circle and the negative $p_\ell^-$ lie on the lower semi-circle. And $p_0,p_n$ on the $x$-axis.

More precisely: $p_0=(0,0)$, $p_n=(n,0)$,
\[
	p_s^\delta=(s,\delta\sqrt{n^2/4-(s-n/2)^2})
\]
for all $0<s<n$. See Figure \ref{fig: C(Z) in circle}.
\end{proof}

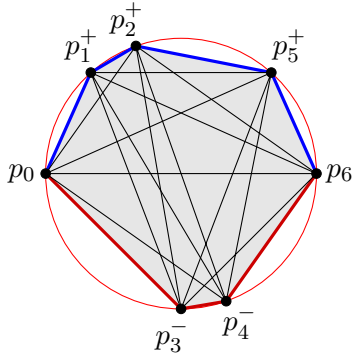
\begin{figure}[htbp]
\begin{center}
\begin{tikzpicture}[scale=.6]
\coordinate (p0) at (0,0);
\coordinate (p1) at (1,2.24);
\coordinate (p1L) at (.8,2.24);
\coordinate (p2) at (2,2.83);
\coordinate (p2L) at (1.8,2.83);
\coordinate (p3) at (3,-3);
\coordinate (p3L) at (2.8,-3);
\coordinate (p4) at (4,-2.83);
\coordinate (p4R) at (4.3,-2.7);
\coordinate (p5) at (5,2.24);
\coordinate (p5R) at (5.4,2.24);
\coordinate (p6) at (6,0);
\draw[color=red] (3,0) circle [radius=3cm];
\draw[fill,color=gray!20!white] (p0)--(p1)--(p2)--(p5)--(p6)--(p4)--(p3)--(p0);
\draw[very thick,color=blue] (p0)--(p1)--(p2)--(p5)--(p6);
\draw[very thick, color=red!80!black] (p6)--(p4)--(p3)--(p0);
\foreach\x in {p0,p1,p2,p3,p4,p5,p6} 
\draw[fill] (\x) circle [radius=3pt];
\draw (p0) node[left]{$p_0$};
\draw (p6) node[right]{$p_6$};
\draw (p1L) node[above]{$p_1^+$};
\draw (p2L) node[above]{$p_2^+$};
\draw (p5R) node[above]{$p_5^+$};
\draw (p3L) node[below]{$p_3^-$};
\draw (p4R) node[below]{$p_4^-$};
\draw (p6)--(p0)--(p4) (p2)--(p0)--(p5);
\draw (p2)--(p4)--(p5) (p5)--(p3)--(p6) (p3)--(p2)--(p6);
\draw (p6)--(p1)--(p4) (p3)--(p1)--(p5);
\end{tikzpicture}
\caption{Proof of Reineke's conjecture: The stability polygon $C(Z)$ is convex with no three vertices collinear. So, all chords $V_{ij}$ are stable.}
\label{fig: C(Z) in circle}
\end{center}
\end{figure}
%

%


%
%

\setcounter{section}1

\section{Linear wire diagrams for $A_n$}\label{sec2}

We briefly discuss the wire diagrams of type $A_n$. Given a central charge $Z(x)=\aaa\cdot x+i\bbb\cdot x$ for the quiver $A_n^\vare$, the {\bf wires} $L_i\subseteq \RR^2$, $i=0,\cdots,n$ are defined to be the graphs of the functions $f_i:\RR\to\RR$ given by:
\[
	f_i(t)= t(m-b_1-b_2-\cdots-b_i)+a_1+\cdots+a_i
\]
where $m$ is any convenient real number. The value of $m$ is not important since $f_j-f_i$ is independent of $m$. For every $0\le i<j\le n$ let $t_{ij}$ be the $x$-coordinate of the intersection point $L_i\cap L_j$, i.e., the solution of the equation $f_i(t_{ij})=f_j(t_{ij})$. This is:
\[
	t_{ij}=\frac{a_{i+1}+\cdots+a_j}{b_{i+1}+\cdots+b_j}=\sigma_Z(M_{ij})=\text{slope of }V_{ij}.
\]
We have the following easy theorem.

\begin{thm}\label{thm: stability for linear wires}
For $Z$ a central charge on $KA_n^\vare$,
$M_{ij}$ is $Z$-stable if and only if the following two conditions hold.
\begin{enumerate}
\item For all $i<k<j$ with $\vare_k=+$, $f_k(t_{ij})>f_i(t_{ij})=f_j(t_{ij})$. Equivalently, $L_k$ lies above the point $L_i\cap L_j$.
\item For all $i<\ell <j$ with $\vare_\ell=-$, $f_\ell(t_{ij})<f_i(t_{ij})=f_j(t_{ij})$, i.e., $L_\ell$ passes under $L_i\cap L_j$.
\end{enumerate}
\end{thm}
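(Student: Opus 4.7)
My plan is to reduce the statement directly to Theorem \ref{thm: stability of chords} by showing that the wire-diagram conditions (1) and (2) are exactly the chord-diagram conditions in disguise, under a point-line duality sending the vertex $p_i$ of the stability polygon to the wire $L_i$.

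First, I would record two basic facts about the wires. The slope of $L_i$ in the $(t,y)$-plane is $m - b_1 - \cdots - b_i$, which is strictly decreasing in $i$ because every $b_j$ is positive, so any two distinct wires meet in exactly one point. Setting $X_i := b_1+\cdots+b_i$ and $Y_i := a_1+\cdots+a_i$ (so that $p_i = (X_i, Y_i)$), the equation $f_i(t) = f_j(t)$ solves to
\[
	t_{ij} = \frac{Y_j - Y_i}{X_j - X_i} = \text{slope of } V_{ij} = \sigma_Z(M_{ij}),
\]
which matches the formula stated just before the theorem.

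The heart of the argument is a one-line computation. For $i < k < j$,
\[
	f_k(t_{ij}) - f_i(t_{ij}) \;=\; (Y_k - Y_i) - t_{ij}(X_k - X_i) \;=\; (X_k - X_i)\bigl(\text{slope of } V_{ik} - \text{slope of } V_{ij}\bigr).
\]
Since $X_k - X_i > 0$, the wire $L_k$ lies strictly above (respectively below, on) the intersection point $L_i \cap L_j$ if and only if the chord $V_{ik}$ has strictly greater (respectively smaller, equal) slope than $V_{ij}$. Because $p_i, p_k, p_j$ have strictly increasing $x$-coordinates, a standard elementary observation says this is equivalent to $p_k$ lying strictly above (respectively below, on) the chord $V_{ij}$.

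Applying this equivalence to each positive vertex $p_k^+$ with $i<k<j$ translates condition (1) of the present theorem into condition (1) of Theorem \ref{thm: stability of chords}, and similarly for the negative vertices and condition (2). The ``no internal vertices on $V_{ij}$'' clause in Theorem \ref{thm: stability of chords} matches the strictness of the inequalities $f_k(t_{ij}) > f_i(t_{ij})$ and $f_\ell(t_{ij}) < f_i(t_{ij})$ here, so $Z$-stability in the sense of Theorem \ref{thm: stability of chords} is equivalent to the wire conditions, completing the proof. I expect no real obstacle: the only care needed is bookkeeping of inequality directions, which is controlled by the fact that the slopes of the wires decrease in $i$ while the $x$-coordinates $X_i$ of the dual vertices increase in $i$.
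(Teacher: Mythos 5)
Your proposal is correct and follows essentially the same route as the paper: both reduce the statement to Theorem \ref{thm: stability of chords} by observing that $f_k(t_{ij})-f_i(t_{ij})$ has the same sign as $\mathrm{slope}(V_{ik})-\mathrm{slope}(V_{ij})$, which in turn encodes whether $p_k$ lies above or below the chord $V_{ij}$. Your explicit factorization $f_k(t_{ij})-f_i(t_{ij})=(X_k-X_i)\bigl(\mathrm{slope}(V_{ik})-\mathrm{slope}(V_{ij})\bigr)$ is just a more computational phrasing of the paper's remark that $f_k-f_i$ is decreasing and vanishes at $t_{ik}$, and your handling of the strict-versus-weak inequalities correctly accounts for the ``no internal vertices'' clause.
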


\begin{proof} This follows from Theorem \ref{thm: stability of chords} since (1) is equivalent to the condition that $p_k$ lies above the chord $V_{ij}$ and (2) is equivalent to the condition that $p_\ell$ lies below $V_{ij}$.

Proof of equivalence for (1): Since $f_k-f_i$ has negative slope and becomes 0 at $t_{ik}$, $f_k(t_{ij})>f_i(t_{ij})$ if and only if the slope of $V_{ik}$ which is $t_{ik}$ is greater than $t_{ij}$, the slope of $V_{ij}$. But this condition is equivalent to $p_k$ being above $V_{ij}$. The proof for (2) is similar.
\end{proof}

Figure \ref{fig: wire diagram for An stability condition} shows the use of colors to determine stability. $M_{ij}$ is stable if $L_i\cap L_j$ is below all positive wires (in blue) of intermediate slope (between those of $L_i,L_j$) and above all negative wires (in red) of intermediate slope. For the wire diagram in Figure \ref{Figure001} in the introduction we see that, in that example, all modules are stable except $M_{03}=I_2$.
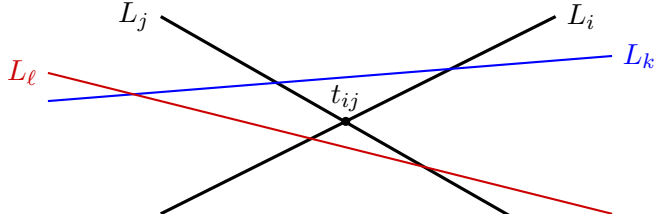
\begin{figure}[htbp]
\begin{center}
\begin{tikzpicture}[scale=.75]
\clip (-6,-1.5) rectangle (6,2.3);
\coordinate (A) at (-3.47,0.61);
\coordinate (B) at (.28,0.14);
\coordinate (C) at (-.95,0.83);
\coordinate (D) at (2.14,1.08);
\begin{scope}
	\draw[very thick] (-3,-1.5)--(4,2) node[right]{$L_i$}
	(-3,2)--(4,-2) (-3,2)node[left]{$L_j$};
\end{scope}
\draw[thick,color=blue] (-5,0.5)--(5,1.3) node[right]{$L_k$};
\draw[thick,color=red!80!black] (5,-1.5)--(-5,1) node[left]{$L_\ell$};
\draw[fill] (B) circle[radius=2pt] (B) node[above]{$t_{ij}$};
\end{tikzpicture}
\caption{$M_{ij}$ is stable when all positive lines $L_k$ for $i<k<j$ (in blue) are above and all negative lines $L_\ell$ for $i<\ell<j$ (in red) are below $L_i\cap L_j$.}
\label{fig: wire diagram for An stability condition}
\end{center}
\end{figure}
%


%
%

\setcounter{section}2

\section{Wall crossing and maximal green sequences}\label{sec3} 

We recall the wall crossing version of stability from Bridgeland \cite{B}, \cite{B2}, Derksen-Weyman \cite{DW}, \cite{IOTW2} which define a maximal green sequence. See \cite{PartI}, \cite{BST} for details of this particular formulation given by a green path passing through a finite sequence of ``walls'' $D(M)$. In this section, $\Lambda=KQ$ is the path algebra of an arbitrary acyclic quiver $Q$.

\subsection{Semistability sets $D(M)$}\label{ss set} The relationship between linear stability conditions $Z$ and walls $D(M)$ defined below rests on the following observation.

\begin{rem}\label{rem: relation between Z and D(M)}
A $\Lambda$-module $M$ is $Z$-stable for $Z(x)=\aaa\cdot x+i\bbb\cdot x$ with slope $\sigma_Z(M)=0$ if and only if $\aaa\cdot \undim M=0$ and $\aaa\cdot \undim M'>0$ for all (nonzero) proper submodules $M'\subset M$.
\end{rem}

\begin{defn} Let $M$ be a finitely generated module over $\Lambda=KQ$.
The {\bf semistability set} $D(M)$ of $M$ is defined to be the set 
\[
D(M):=\{x\in \RR^n\,:\, x\cdot \undim M=0,\ x\cdot \undim M'\le 0\ \forall M'\subseteq M\}.
\]
The {\bf stability set} of $M$, denoted $\text{int}D(M)$, is defined to be the subset of $D(M)$ of points $x$ so that $x\cdot \undim M'<0$ for all nonzero $M'\subsetneq M$. Let $\partial D(M)=D(M)-\text{int}D(M)$. We call $\partial D(M)$ the {\bf boundary} of $D(M)$. The sets $D(M)$ are also called {\bf walls} since they divide $\RR^n$ into ``chambers'' as we explain below.
\end{defn}

Note that $D(M)$ is a convex subset of the {\bf hyperplane}
\[
H(M)=\undim M^\perp=\{x\in\RR^n\,:\, x\cdot \undim M=0\}.
\]
For example, when $M=S_i$ is simple, we have $D(S_i)=H(S_i)=\{x\in\RR^n\,:\, x_i=0\}$.

Remark \ref{rem: relation between Z and D(M)} translates into the following condition on the linear path
\[
	\lambda_Z(t):=\bbb t-\aaa
\]
corresponding to the central charge $Z(x)=\aaa\cdot x+i\bbb\cdot x$.

\begin{prop}\label{prop: relation between Z and D(M)}
The linear path $\lambda_Z$ crosses the wall $D(M)$ at time $t_0$ if and only if $M$ is $Z$-semistable with slope $t_0$. Furthermore $M$ is $Z$-stable if and only if $\lambda_Z(t_0)\in \emph{int}D(M)$.
\end{prop}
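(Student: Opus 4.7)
The plan is to unwind both the definition of the wall $D(M)$ and the definition of $Z$-semistability and observe that they match verbatim after substituting the formula $\lambda_Z(t)=\bbb t-\aaa$.

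First I would handle the equality condition. The point $\lambda_Z(t_0)$ lies on the hyperplane $H(M)$ iff
\[
0=\lambda_Z(t_0)\cdot \undim M = t_0(\bbb\cdot\undim M)-\aaa\cdot\undim M,
\]
and since $\bbb$ has strictly positive coordinates and $M\neq 0$, $\bbb\cdot\undim M>0$, so this holds iff $t_0=\aaa\cdot\undim M/\bbb\cdot\undim M=\sigma_Z(M)$. This already identifies the unique candidate crossing time.

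Next I would analyze the inequality condition. For any nonzero submodule $M'\subseteq M$, again $\bbb\cdot\undim M'>0$, so
\[
\lambda_Z(t_0)\cdot\undim M' = t_0(\bbb\cdot\undim M')-\aaa\cdot\undim M'\le 0
\]
is equivalent (after dividing by $\bbb\cdot\undim M'$) to $t_0\le \sigma_Z(M')$. Combined with $t_0=\sigma_Z(M)$ from the previous step, this says exactly that $\sigma_Z(M)\le \sigma_Z(M')$ for every nonzero submodule $M'\subseteq M$, which is the definition of $Z$-semistability (Definition \ref{def: semistable}). Conversely, if $M$ is $Z$-semistable with slope $t_0$, then reversing this chain shows $\lambda_Z(t_0)\in D(M)$. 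This proves the first assertion.

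For the second assertion, I would simply repeat the same computation with strict inequalities: $\lambda_Z(t_0)\in \text{int}D(M)$ means $\lambda_Z(t_0)\cdot\undim M'<0$ for all nonzero proper $M'\subsetneq M$, which under the same division by $\bbb\cdot\undim M'>0$ is equivalent to $\sigma_Z(M)<\sigma_Z(M')$, i.e., $Z$-stability. There is essentially no obstacle here; the only point worth emphasizing is the use of positivity of every coordinate of $\bbb$, which is what allows us to pass freely between the linear-algebraic inequality defining $D(M)$ and the slope inequality defining (semi)stability.
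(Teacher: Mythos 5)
Your proof is correct and follows essentially the same route as the paper: solve $\lambda_Z(t_0)\cdot\undim M=0$ to get $t_0=\sigma_Z(M)$, then use positivity of $\bbb\cdot\undim M'$ to translate the sign condition $\lambda_Z(t_0)\cdot\undim M'\le 0$ into the slope inequality $\sigma_Z(M)\le\sigma_Z(M')$, with strict inequalities giving the interior/stable case. The paper arranges the same algebra slightly differently (expanding $\lambda_Z(t_1)\cdot\undim M'=0$ at $t_1=\sigma_Z(M')$), but the content is identical.
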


\begin{proof} Since $\bbb\cdot\undim M>0$, there is a unique $t_0\in \RR$ so that $\lambda_Z(t_0)\in H(M)$ or, equivalently, $\lambda_Z(t_0)\cdot \undim M=0$. Solving for $t_0$ we get $t_0=\sigma_Z(M)$. If $M$ is $Z$-semistable then, for any $M'\subseteq M$, we have $t_1=\sigma_Z(M')\ge \sigma_Z(M)=t_0$. Since $\bbb\cdot\undim M'>0$, we get:
\[
	0=\lambda_Z(t_1)\cdot \undim M'=(t_1-t_0)\bbb\cdot \undim M'+\lambda_Z(t_0)\cdot \undim M'\ge \lambda_Z(t_0)\cdot \undim M'.
\]
Therefore $\lambda_Z(t_0)\in D(M)$. The same calculation proves the converse. This proves the first statement. The second statement follow from this and the definition of $\text{int}D(M)$.
\end{proof}

We need the following important observation from \cite{PartI}. Recall that a module $M$ is called a {\bf brick} if every nonzero endomorphism of $M$ is an automorphism.

\begin{prop}\label{prop: D(M) for bricks M}
If the stability set $\emph{int}D(M)$ is nonempty, then $M$ is a brick.
\end{prop}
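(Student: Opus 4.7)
The plan is to pick a point $x$ in the nonempty interior $\text{int}D(M)$ and use the strict inequalities defining that set to show directly that every nonzero $\phi\in\End_\Lambda(M)$ is an automorphism; then $M$ is a brick by definition. I would argue by contradiction, assuming $\phi$ is nonzero but not invertible.

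The key move is to apply $x\cdot(-)$ to the short exact sequence
\[
0\to \ker\phi\to M\to \im\phi\to 0,
\]
which gives $\undim M = \undim\ker\phi + \undim\im\phi$ and hence
\[
0 \;=\; x\cdot\undim M \;=\; x\cdot\undim\ker\phi + x\cdot\undim\im\phi.
\]
Since $M$ is finite dimensional, an endomorphism is an automorphism iff it is injective iff it is surjective. So the assumption that $\phi$ is not an automorphism forces $\ker\phi\neq 0$ and $\im\phi\neq M$, while $\phi\neq 0$ forces $\ker\phi\neq M$ and $\im\phi\neq 0$. Thus $\ker\phi$ and $\im\phi$ are \emph{both} proper nonzero submodules of $M$, so the defining property of $\text{int}D(M)$ yields $x\cdot\undim\ker\phi<0$ and $x\cdot\undim\im\phi<0$. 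Summing contradicts the displayed equality $x\cdot\undim M=0$, so every nonzero endomorphism must be an automorphism.

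This is essentially a one-step deduction once the short exact sequence above is written down, so there is no serious obstacle. The only point that deserves a moment's attention is the use of finite dimensionality to ensure that failure to be an automorphism produces \emph{both} a nonzero proper kernel and a nonzero proper image; without both halves, only one of the two strict inequalities would be available and the contradiction would not close.
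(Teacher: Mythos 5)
Your proof is correct; the paper itself gives no proof of this proposition, deferring it to \cite{PartI}, and your argument via the short exact sequence $0\to\ker\phi\to M\to\im\phi\to 0$ is the standard one used there. The one point you flag — that a nonzero non-automorphism of a finite-dimensional module has \emph{both} a nonzero proper kernel and a nonzero proper image, so that both strict inequalities from $\text{int}D(M)$ are available — is exactly the right thing to be careful about, and you handle it correctly.
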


The simplest nontrivial example is the three dimensional algebra $KA_2^-$ with sign function $\vare=(0,-,0)$. This is the quiver $1\to 2$. There are three indecomposable right $KA_2^-$-modules $S_1, S_2,P_1$. The semistability sets $D(S_1), D(S_2), D(P_1)\subset \RR^2$ are shown in Figure \ref{FigureA2}.
\begin{figure}[htbp]
\begin{center}
\begin{tikzpicture}
\coordinate (A) at (-2.8,0);
\coordinate (B) at (3,0);
\coordinate (B0) at (2.5,0);
\coordinate (C) at (0,-1.8);
\coordinate (D) at (0,2);
\coordinate (D0) at (0,1.7);
\coordinate (E) at (0,0);
\coordinate (F) at (1.7,-1.7);
\coordinate (F0) at (1.7,-1.5);
\draw[very thick] (A)--(B);
\draw[very thick] (C)--(D);
\draw[very thick] (E)--(F);
\draw (B0) node[above]{$D(S_2)$};
\draw (D0) node[right]{$D(S_1)$};
\draw (F0) node[right]{$D(P_1)$};
\draw[thick,color=green!70!black,->] (-.8,-1.2)..controls (1,-1) and (1,0).. (1.2,.8);
\draw[thick,color=green!70!black,->] (-1.2,-.8)..controls (-1,1) and (0,1).. (.8,1.2);
\draw[thick,color=green!70!black,->] (-.6,-.6)--(.8,.8);
\draw[color=green!70!black] (-.6,-1.2)node[below]{$\gamma_1$} (-.8,-.8) node{$\gamma_3$};
\draw[color=green!70!black] (-1.2,-.6)node[left]{$\gamma_2$};
\end{tikzpicture}
\caption{Semistability sets $D(M)$ for $KA_2^-$ where $A_2^-:1\to 2$ with three green paths $\gamma_1,\gamma_2,\gamma_3$. Since $S_2\subset P_1$, 
\[
D(P_1)=\{x\in\RR^2\,|\, x\cdot (1,1)=0\text{ , } x\cdot(0,1)\le0\}.\qquad\ 
\]
}
\label{FigureA2}
\end{center}
\end{figure}
\begin{defn}\label{def: green path}
By a {\bf green path} for $\Lambda$ we mean a smooth ($C^1$) path $\gamma:\RR\to\RR^n$ having the following properties.
\begin{enumerate}
\item $\gamma(t)$ has all coordinates negative for $t<<0$.
\item $\gamma(t)$ has all coordinates positive for $t>>0$.
\item Whenever $\gamma(t_0)\in D(M)$, the directional derivative of $\gamma$ in the direction $\undim M$ is positive, i.e.,
\[
	\undim M\cdot \frac {d\gamma}{dt}(t_0)>0.
\]
\end{enumerate}
We say that $\gamma$ passes through the wall $D(M)$ in the {\bf green direction} if (3) holds.
\end{defn}

Three examples of green paths are drawn in Figure \ref{FigureA2} with $\gamma_3$ being linear. The vector $(1,1)$ is always green. So, we see that each $\gamma_i$ passes through the walls in the green direction.

The path $\gamma(t)=(t,t,\cdots,t)$ ($\gamma_3$ in Figure \ref{FigureA2}) is always green. More generally, any linear function $\lambda(t)=\aaa+\bbb t$ for $\aaa,\bbb\in \RR^n$ is a green path if all coordinates of $b$ are positive. We call such a function a {\bf linear green path}. We say that $\lambda=\aaa+\bbb t$ is a {\bf standard linear path} if $\bbb=(1,1,\cdots,1)$. (The corresponding central charge is $Z(x)=-\aaa\cdot x+i\bbb\cdot x$.)

\begin{defn}
Given a green path $\gamma$ for $\Lambda$, a $\Lambda$-module $M$ and $t_0\in \RR$, the pair $(M,t_0)$ is called {\bf $\gamma$-stable}, resp. {\bf $\gamma$-semistable}, if $\gamma(t_0)\in \text{int}D(M)$, resp $\gamma(t_0)\in D(M)$.
\end{defn}

\subsection{Properties of green paths}\label{sec32}

One of the fundamental properties which holds for any $\Lambda$ is the following.

\begin{lem}\label{lem: uniqueness of tM}
If $\gamma$ is a green path and $(M,t_0)$ is $\gamma$-stable, then $\{t_0\}=\gamma^{-1}H(M)$. 
\end{lem}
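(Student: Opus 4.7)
The plan is to reduce the statement to showing that the smooth scalar function $g(t):=\undim M\cdot\gamma(t)$ has exactly one zero on $\RR$. Since $\gamma(t_0)\in\text{int}\,D(M)\subseteq H(M)$ we have $g(t_0)=0$, and the green-direction condition (3) of Definition~\ref{def: green path} applied at $t_0\in\gamma^{-1}(D(M))$ gives $g'(t_0)>0$. The asymptotic conditions (1),(2) on $\gamma$ combined with the fact that $\undim M$ has nonnegative entries with at least one positive yield $g(t)\to-\infty$ as $t\to-\infty$ and $g(t)\to+\infty$ as $t\to+\infty$, so $g$ has at least one zero and the lemma reduces to ruling out a second one.

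I would argue by contradiction and strong induction on $\dim M$. Suppose $g$ has a second zero; by symmetry assume the smallest $T>t_0$ with $g(T)=0$ exists, so that $g>0$ on $(t_0,T)$ and hence $g'(T)\le 0$. Condition (3) would force $g'(T)>0$ whenever $\gamma(T)\in D(M)$, so in fact $\gamma(T)\in H(M)\setminus D(M)$ and there is a nonzero proper submodule $M'\subsetneq M$ with $\undim M'\cdot\gamma(T)>0$. When $\dim M=1$ the module $M$ is simple and $D(M)=H(M)$, so this last step is vacuous and the green-direction argument already contradicts $g'(T)\le 0$, completing the base case. For $\dim M\ge 2$, I would select $M'$ of minimal dimension among nonzero submodules of $M$ with $\undim M'\cdot\gamma(T)>0$; minimality then gives $\undim N\cdot\gamma(T)\le 0$ for every nonzero $N\subsetneq M'$. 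Setting $h(t):=\undim M'\cdot\gamma(t)$, we have $h(t_0)<0$ from $\gamma(t_0)\in\text{int}\,D(M)$ and $h(T)>0$ by choice of $M'$, so the intermediate value theorem produces a first zero $s_0\in(t_0,T)$ of $h$, at which $h'(s_0)\ge 0$. The aim is then to verify that $\gamma(s_0)\in\text{int}\,D(M')$, making $(M',s_0)$ itself a $\gamma$-stable pair of strictly smaller dimension, and to combine this with a further descent of the same type to produce the desired contradiction.

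The main obstacle I expect is verifying the sub-claim $\gamma(s_0)\in\text{int}\,D(M')$: the minimality of $\dim M'$ yields $\undim N\cdot\gamma(T)\le 0$ for nonzero proper $N\subsetneq M'$ only at the later time $T$, and this bound does not transfer automatically to the earlier time $s_0$. The anticipated remedy is to run the same minimality-plus-first-return construction one level deeper whenever the sub-claim fails, producing a strictly smaller obstructing submodule each time; since the dimension strictly decreases at each iteration, the descent must terminate at a simple module, where $D=H$ and the green-direction condition forces the contradiction unambiguously.
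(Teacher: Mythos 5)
Your setup is fine: reducing to the claim that $g(t)=\undim M\cdot\gamma(t)$ has a unique zero, getting $g'(T)\le 0$ at a hypothetical first return $T>t_0$, and concluding from Definition \ref{def: green path}(3) that $\gamma(T)\in H(M)\setminus D(M)$, i.e.\ that some nonzero $M'\subsetneq M$ has $\undim M'\cdot\gamma(T)>0$, are all correct. The gap is in everything after that. Your descent cannot produce a contradiction: at each stage you locate a \emph{first} zero $s_0$ of $h(t)=\undim M'\cdot\gamma(t)$ coming up from negative values, so $h'(s_0)\ge 0$, and the green condition at a point of $D(M')$ demands $h'(s_0)>0$ --- these are compatible, not contradictory. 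The same is true at the bottom of the descent: for a simple module $S$ the green condition only says the crossing of $H(S)=D(S)$ goes from negative to positive, which is exactly what a first sign change does. So even if you could verify the sub-claim $\gamma(s_0)\in\operatorname{int}D(M')$ (which you correctly flag as unproven --- minimality of $\dim M'$ at time $T$ does not control the submodules of $M'$ at time $s_0$), the inductive hypothesis ``$h$ has a unique zero'' is perfectly consistent with $h(t_0)<0<h(T)$ and yields nothing. The descent terminates without ever contradicting anything.

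The missing ingredient is the interaction with \emph{quotient} modules, which is where the paper's proof lives. From your $M'$ with $\undim M'\cdot\gamma(T)>0$, pass to the proper quotient $M''=M/M'$: since $\gamma(T)\in H(M)$, one has $\undim M''\cdot\gamma(T)=-\undim M'\cdot\gamma(T)<0$, while $\undim M''\cdot\gamma(t_0)=-\undim M'\cdot\gamma(t_0)>0$ because $\gamma(t_0)\in\operatorname{int}D(M)$. Hence $\gamma$ crosses $H(M'')$ at some time in $(t_0,T)$, contradicting Theorem \ref{thm: order thms, general}(b), which puts all crossings of hyperplanes of proper quotients of $M$ \emph{before} $t_0$. (The symmetric case of a last zero $T<t_0$ is handled by Theorem \ref{thm: order thms, general}(a).) This is essentially the paper's route: its proof of the lemma quotes the graphs of $h_x(\mu(x))$ and $h_x(\nu(x))$ built in the proof of that theorem, each of which is a union of walls and is therefore crossed exactly once by a green path, trapping $\gamma$ away from $H(M)\setminus D(M)$. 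Your proposal never invokes the quotient-module half of the order theorem, and without it the argument does not close.
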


This implies that $t_0$ is uniquely determined. We denote it $t_M$. The proof of Lemma \ref{lem: uniqueness of tM} follows the proof of Theorem \ref{thm: order thms, general} below.

\begin{defn}
A green path $\gamma$ will be called {\bf finite} if there are only finitely many modules $M_1,\cdots,M_m$ (up to isomorphism) for which $\gamma$ passes through $D(M_i)$ and if, furthermore, $\gamma$ meets the interior of each $D(M_i)$ at distinct times $t_i$ and $t_1<t_2<\cdots<t_m$. By a {\bf maximal green sequence (MGS)} for $\Lambda$ we mean a finite ordered sequence $M_1,\cdots,M_m$ given by some finite green path $\gamma$. The MGS will be called {\bf linear}, resp. {\bf standard linear}, if it is given by a linear green path, resp. standard linear path.
\end{defn}

For any green path we have: $\Hom_\Lambda(M_i,M_j)=0$ whenever $i<j$. This is Corollary \ref{cor: hom orthogonality} which follows easily from the following theorem that we will need later in this paper. (See \cite{PartI}, \cite{PartII}, \cite{BST} for more details and other equivalent definitions of a maximal green sequence.)

\begin{thm}\label{thm: order thms, general}
Suppose that $\gamma$ is a green path for $\Lambda$ and $(M,t_0)$ is $\gamma$-stable. Then $\gamma$ crosses the hyperplanes of proper submodules $M'\subsetneq M$ after $t_0$ and it crosses the hyperplanes of proper quotient modules $M''$ before $t_0$. More precisely:
\begin{enumerate}
\item[(a)] Whenever $\gamma(t')\in H(M')$ for some $M'\subsetneq M$ then $t_0<t'$.
\item[(b)] Whenever $\gamma(t'')\in H(M'')$ for some proper quotient module $M''$ of $M$ then $t''< t_0$.
 \end{enumerate}
\end{thm}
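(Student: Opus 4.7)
The plan is to prove (a) and (b) in parallel, each by induction on the dimension of the offending submodule (for (a)) or quotient module (for (b)). First I would unpack the hypothesis: $\gamma$-stability of $(M,t_0)$ means $\gamma(t_0)\in\text{int}D(M)$, so $\gamma(t_0)\cdot\undim M=0$ and $\gamma(t_0)\cdot\undim M'<0$ for every nonzero proper submodule $M'\subsetneq M$. Dually, if $M''$ is a proper quotient of $M$, say $M''=M/K$ with $0\neq K\subsetneq M$, then $\gamma(t_0)\cdot\undim M''=-\gamma(t_0)\cdot\undim K>0$. These sign conditions are the only input besides the green-direction condition.

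For (a), set $g(t)=\gamma(t)\cdot\undim M'$ and suppose, for contradiction, that $\gamma(t')\in H(M')$ with $t'\leq t_0$. Since $g(t_0)<0$ and $g(t')=0$, necessarily $t'<t_0$. I would then extract the extremal time
\[
s:=\max\{t\in[t',t_0]\,:\,g(t)=0\},
\]
which exists by compactness; by maximality and continuity $g<0$ on $(s,t_0]$, forcing the one-sided derivative to satisfy $g'(s)\leq 0$. Now split into two cases. If $\gamma(s)\in D(M')$, the green-direction hypothesis forces $g'(s)=\undim M'\cdot\gamma'(s)>0$, a contradiction. Otherwise $\gamma(s)\in H(M')\setminus D(M')$, and by the definition of $D(M')$ there is a submodule $M''\subseteq M'$ with $\gamma(s)\cdot\undim M''>0$; since $g(s)=0$ and $\gamma(s)\cdot 0=0$, this $M''$ must satisfy $0\neq M''\subsetneq M'\subsetneq M$, in particular $\dim M''<\dim M'$. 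Combining $\gamma(s)\cdot\undim M''>0$ with $\gamma(t_0)\cdot\undim M''<0$, the intermediate value theorem on $[s,t_0]$ yields $u\in(s,t_0)$ with $\gamma(u)\in H(M'')$, contradicting the inductive hypothesis (which, since $\dim M''<\dim M'$, gives $u>t_0$). The base case $\dim M'=1$ is subsumed, because a simple module satisfies $H(M')=D(M')$, so one is automatically in the first case.

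For (b), the argument is the mirror image, by induction on $\dim M''$. Set $h(t)=\gamma(t)\cdot\undim M''$; then $h(t_0)>0$, and assuming $\gamma(t'')\in H(M'')$ with $t''\geq t_0$ one has $t''>t_0$. Choose $s:=\min\{t\in[t_0,t'']:h(t)=0\}$, so $h>0$ on $[t_0,s)$ and $h'(s)\leq 0$. The same case split applies: either $\gamma(s)\in D(M'')$, so the green condition gives $h'(s)>0$, a contradiction; or there is a submodule $N\subseteq M''$ with $\gamma(s)\cdot\undim N>0$, and the constraints $h(s)=0$ and $N\neq 0$ force $0\neq N\subsetneq M''$. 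Setting $Q:=M''/N$, one checks that $Q$ is a proper quotient of $M$ with $\dim Q<\dim M''$, and
\[
\gamma(s)\cdot\undim Q=h(s)-\gamma(s)\cdot\undim N<0<\gamma(t_0)\cdot\undim Q,
\]
so the IVT on $[t_0,s]$ produces $u\in(t_0,s)$ with $\gamma(u)\in H(Q)$, contradicting the inductive hypothesis applied to $Q$.

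The main obstacle is the mismatch between the two hypotheses the proof must reconcile: the green-direction condition only constrains $\gamma'$ at points of the \emph{semistability set} $D(N)$, whereas the conclusion concerns arbitrary crossings of the larger hyperplane $H(N)$. The two-case split is designed precisely to exploit this gap: when $\gamma(s)$ fails to lie in $D(M')$, the definitional failure hands us, for free, a proper submodule of $M'$ on whose hyperplane $\gamma$ must cross strictly earlier, which is exactly the structure that drives the induction.
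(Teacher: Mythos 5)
Your proof is correct, but it follows a genuinely different route from the paper's. The paper's argument is global and geometric: for each $x$ in the hyperplane $H(M)$ it considers the line $h_x(t)=x+t\,\undim M$ perpendicular to $H(M)$, defines $\mu(x)$ as the first time this line meets some $H(M')$ with $M'\subsetneq M$, and observes that the graph of $x\mapsto h_x(\mu(x))$ is a union of pieces of semistability sets $D(M')$ separating $\operatorname{int}D(M)$ (below) from all the hyperplanes $H(M')$ (on or above); the green condition then forces $\gamma$ to cross this separating surface exactly once, from below to above, and only after time $t_0$. Your argument replaces this construction with an induction on $\dim M'$ combined with two elementary observations at the extremal crossing time $s$: a one-sided derivative bound $g'(s)\le 0$ coming from the sign of $g=\gamma(\cdot)\cdot\undim M'$ on $(s,t_0]$, and the dichotomy that either $\gamma(s)\in D(M')$ (where the green condition gives $g'(s)>0$, a contradiction) or the failure of membership in $D(M')$ produces a strictly smaller submodule to which the inductive hypothesis and the intermediate value theorem apply. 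What your approach buys is that it is entirely local and avoids the continuity and topological-separation claims about the graph of $\mu$, which the paper treats somewhat informally; what the paper's approach buys is a single picture that handles all submodules $M'$ simultaneously and makes the ``wall'' structure visible, which it then reuses immediately in the proof of Lemma \ref{lem: uniqueness of tM}. Both proofs use exactly the same inputs ($C^1$ smoothness, the green-direction condition on the sets $D(N)$, and the strict inequalities defining $\operatorname{int}D(M)$), and your dual treatment of part (b) via quotients $Q=M''/N$ is the correct mirror of part (a).
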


\begin{proof}
(a) For every $x$ in the hyperplane $H(M)=(\undim M)^\perp$, let $h_x:\RR\to \RR^n$ be the linear function $h_x(t)=x+t\undim M$. The image of $h_x$ is a straight line perpendicular to $H(M)$ passing through $x=h_x(0)$. Let $t=\mu(x)\in\RR$ be the smallest real number so that $h_x(t)\in H(M')$ for some proper submodule $M'$ of $M$. In other words,
\[
	x\cdot \undim M'+t\undim M\cdot\undim M'=0.
\]
Since $\undim M\cdot\undim M'\ge \dim M'>0$, we can divide to get:
\[
	\mu(x)=\min_{M'} \frac{-x\cdot \undim M'}{\undim M\cdot \undim M'}
\]
Since there are only finitely many dimension vectors of the form $\undim M'$, $\mu$ is the minimum of a finite collection of linear functions. So, $\mu$ is continuous.

For every $x\in H(M)$ we have the following.
\begin{enumerate}
\item $h_x(\mu(x))\in H(M')$ for some $M'\subsetneq M$.
\item $h_x(t)\cdot \undim M'\le0$ for all $t\le \mu(x)$ and all $M'\subsetneq M$.
\end{enumerate}
For pairs $x,M'$ which occur in (1) it follows that:
\begin{enumerate}
\item[(3)] $h_x(\mu(x))\in D(M')$
\item[(4)] $h_x(t)\cdot \undim M'>0$ for all $t>\mu(x)$.
\end{enumerate}
Since $x=h_x(0)$ we see that
\begin{enumerate}
\item[(5)] $x$ is in the interior of $D(M)$ if and only if $\mu(x)>0$.
\end{enumerate}
\begin{center}
\begin{tikzpicture}
\coordinate (A) at (1,0);
\coordinate (L1) at (1.6,-1);
\coordinate (A1) at (1.6,-1.3);
\coordinate (B) at (3,0);
\coordinate (B1) at (4.8,1);
\coordinate (B2) at (6,1.1);
\coordinate (C) at (7,0);
\coordinate (C1) at (8,-1.3);
\coordinate (L2) at (8,-1);
\coordinate (D) at (9,0);
\coordinate (L0) at (5,0);
\coordinate (G1) at (5.9,-.5);
\coordinate (G2) at (5.9,.5);
\coordinate (nL1) at (1.8,.8);
\coordinate (nA1) at (1.8,1.1);
\coordinate (nB) at (3,0);
\coordinate (nB1) at (4.4,-1);
\coordinate (nB2) at (5.7,-1.1);
\coordinate (nB3) at (6.4,-.9);
\coordinate (nC) at (7,0);
\coordinate (nC1) at (7.5,1.1);
\coordinate (nC2) at (7.5,.9);
\draw (A)--(D);
\draw[very thick] (B)--(C);
\draw (L0) node[below]{$D(M)$};
\draw[thick] (A1)--(B)--(B1)--(B2)--(C)--(C1);
\draw[thick,color=red!80!black] (nA1)--(nB)--(nB1)--(nB2)--(nB3)--(nC)--(nC1) (nC2) node[right]{$D(M'')$};
\draw[color=red!80!black] (nL1) node[left]{graph of $h_x(\nu(x))$};
\draw (D) node[right]{$H(M)$};
\draw (L1) node[left]{graph of $h_x(\mu(x))$};
\draw (L2) node[right]{$D(M')$};
\draw[very thick,color=green!80!black,->] (G1)--(G2);
\draw[color=green!80!black] (G1) node[right]{$\gamma$};
\end{tikzpicture}
\end{center}
If $\gamma$ is a green path, it must cross the graph of the function $x\mapsto h_x(\mu(x))$ at some point. By (3), this graph is a union of semistability sets $D(M')$. So, the ``green'' condition (\ref{def: green path}(3)) means it passes from below to above. Thus, $\gamma$ can only cross this graph once from below to above. By definition of $\mu$, every hyperplane $H(M')$ for every $M'\subsetneq M$ lies on or above this graph. So, if $\gamma(t')\in H(M')$ for any such $M'$, it must be after $\gamma$ crosses this graph.

Given that $(M,t_0)$ is $\gamma$-stable, $\gamma(t_0)$ must be in the interior of $D(M)$ which, by (5), is below the graph of the function $h_x(\mu(x))$. So, $\gamma$ will hit the graph of the function after time $t_0$. This implies $t'>t_0$. So, (a) holds.

The proof of (b) is similar using $\nu(x)$, the largest real number so that $h_x(\nu(x))\in H(M'')$ for some proper quotient $M''$ of $M$.
\end{proof}

\begin{proof}[Proof of Lemma \ref{lem: uniqueness of tM}] The statement is that, for $(M,t_0)$ $\gamma$-stable, $t_0$ is unique. When $\gamma$ crosses $D(M)$, it must already have crossed the graph of $h_x(\nu(x))$ and has not yet crossed the graph of $h_x(\mu(x))$. So, it can never cross $H(M)-D(M)$ which is below the first graph and above the second. Also, $\gamma$ can cross $D(M)$ only once in the green direction.
\end{proof}

\begin{cor}\label{cor: hom orthogonality}
Let $(M_1,t_1),(M_2,t_2)$ be stable pairs for a green path $\gamma$ with $t_1<t_2$. Then $\Hom_\Lambda(M_1,M_2)=0$.
\end{cor}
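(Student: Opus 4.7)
The plan is to suppose for contradiction that there is a nonzero homomorphism $f\colon M_1\to M_2$, and consider $N:=\mathrm{im}(f)$, which is a nonzero module that is simultaneously a quotient of $M_1$ and a submodule of $M_2$. I would split into cases according to whether each of the inclusions $N\subseteq M_2$ and the projection $M_1\twoheadrightarrow N$ is proper or an isomorphism, and derive a contradiction in each case using Theorem \ref{thm: order thms, general} together with Lemma \ref{lem: uniqueness of tM}.

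The key inputs are: (i) by Theorem \ref{thm: order thms, general}(a) applied to the stable pair $(M_2,t_2)$, if $N$ is a proper submodule of $M_2$, then every $t'$ with $\gamma(t')\in H(N)$ satisfies $t'>t_2$; (ii) by Theorem \ref{thm: order thms, general}(b) applied to $(M_1,t_1)$, if $N$ is a proper quotient of $M_1$, then every $t''$ with $\gamma(t'')\in H(N)$ satisfies $t''<t_1$; and (iii) by the definition of a green path (Definition \ref{def: green path}, conditions (1) and (2)) together with the fact that $\undim N$ has non-negative entries with at least one positive, the continuous function $t\mapsto \gamma(t)\cdot \undim N$ goes from strictly negative to strictly positive, so by the intermediate value theorem there exists at least one $t^\ast$ with $\gamma(t^\ast)\in H(N)$.

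Now the case analysis is short. If $N\subsetneq M_2$ and $N$ is a proper quotient of $M_1$, then any crossing time $t^\ast$ of $H(N)$ satisfies both $t^\ast>t_2$ and $t^\ast<t_1$, contradicting $t_1<t_2$ (and such $t^\ast$ exists by (iii)). If $N=M_2$ but $N$ is a proper quotient of $M_1$, then $\gamma(t_2)\in H(N)$ forces $t_2<t_1$ by (ii), a contradiction. Symmetrically, if $M_1\xrightarrow{\sim} N\subsetneq M_2$, then $\gamma(t_1)\in H(N)$ forces $t_1>t_2$ by (i), again a contradiction. Finally, if $N\cong M_1$ and $N=M_2$, then $M_1\cong M_2$ so $\undim M_1=\undim M_2$, hence $H(M_1)=H(M_2)$; then $(M_1,t_1)$ and $(M_2,t_2)$ are both $\gamma$-stable with $\gamma(t_1),\gamma(t_2)$ in the same $D(M)$, and Lemma \ref{lem: uniqueness of tM} forces $t_1=t_2$, contradicting $t_1<t_2$.

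I expect no serious obstacle: the argument is really a bookkeeping exercise once Theorem \ref{thm: order thms, general} is available. The one point that requires a little care is the general case in which $f$ is neither injective nor surjective, which is handled cleanly by passing to $N=\mathrm{im}(f)$ so that both parts (a) and (b) of Theorem \ref{thm: order thms, general} can be applied to the single hyperplane $H(N)$; this is where the assumption $t_1<t_2$ is used in an essential way to produce the contradiction from the incompatible inequalities $t^\ast<t_1$ and $t^\ast>t_2$.
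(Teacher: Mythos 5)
Your proposal is correct and follows essentially the same route as the paper: both pass to the image $N=f(M_1)$, which is simultaneously a quotient of $M_1$ and a submodule of $M_2$, and then combine Theorem \ref{thm: order thms, general}(a),(b) with Lemma \ref{lem: uniqueness of tM} to force a crossing time of $H(N)$ that is both $\le t_1$ and $\ge t_2$, contradicting $t_1<t_2$. The paper merely compresses your four cases into the two inequalities $t_x\le t_1$ and $t_2\le t_x$; the content is identical.
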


\begin{proof} Suppose there is a nonzero morphism $f:M_1\to M_2$. Let $X=f(M_1)\subseteq M_2$. The path $\gamma$ must pass through the hyperplane $H(X)$ at least once since it starts from its negative side and ends on its positive side. Let $t_x$ be one of these times. Then, by (b) in Theorem \ref{thm: order thms, general}, we have $t_x< t_1$ if $X$ is a proper quotient of $M_1$ and $t_x=t_1$ by Lemma \ref{lem: uniqueness of tM} if $X=M_1$. Similarly, by (a) and Lemma \ref{lem: uniqueness of tM}, $t_2\le t_x$ since $X\subseteq M_2$. This contradicts the assumption that $t_1<t_2$.
\end{proof}

\begin{rem}
It is show in \cite{PartI} for $\Lambda$ hereditary and in \cite{PartII} for $\Lambda$ cluster-tilted of finite type (which includes all examples in this paper) that maximal green sequences are characterized by this hom-orthogonality condition. More precisely, a sequence of bricks $M_1,\cdots,M_m$ is a MGS if and only if $\Hom_\Lambda(M_i,M_j)=0$ for $i<j$ and the sequence $(M_i)$ is maximal with this property.
\end{rem}

Lemma \ref{lem: uniqueness of tM} and Theorem \ref{thm: order thms, general} give necessary conditions for $(M,t_0)$ to be $\gamma$-stable. However, we need necessary and sufficient conditions such as the following.

\begin{thm}\label{thm: linear approximation}
Let $\gamma$ be a green path and $\lambda(t)=\aaa+\bbb t$ any linear green path for $\Lambda$ so that
\[
	\lambda(t_0)=\gamma(t_0)\in H(M)
\]for some $\Lambda$-module $M$ and $t_0\in\RR$. Then $(M,t_0)$ is $\gamma$-stable if and only if, for every nonzero proper submodule $M'\subsetneq M$, there is a $t'> t_0$ so that $\lambda(t')\in H(M')$.
\begin{equation}\label{boxed eq}
	{(\forall M'\subsetneq M)(\exists t'>t_0)\ \lambda(t')\in H(M')}
\end{equation}
\end{thm}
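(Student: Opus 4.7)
The plan is to reduce the equivalence to a one-line observation about the sign of the scalar $\lambda(t_0)\cdot\undim M'$, exploiting only the linearity of $\lambda$ together with the equality $\lambda(t_0)=\gamma(t_0)$. The smoothness or green character of $\gamma$ will not be used beyond what the hypothesis $\gamma(t_0)\in H(M)$ already provides; what matters is that $\lambda$ is \emph{linear} with direction vector $\bbb$ having strictly positive coordinates.

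First I would record the following elementary monotonicity. For any nonzero $\Lambda$-module $M'$, the scalar function
\[
t\mapsto \lambda(t)\cdot\undim M'=\aaa\cdot\undim M'+t\,\bbb\cdot\undim M'
\]
is strictly increasing in $t$, since $\bbb$ has all positive coordinates and $\undim M'$ is a nonzero nonnegative integer vector, so $\bbb\cdot\undim M'>0$. In particular there is a unique $t_{M'}\in\RR$ at which $\lambda(t_{M'})\in H(M')$, and by monotonicity the assertion ``there exists $t'>t_0$ with $\lambda(t')\in H(M')$'' is equivalent to the strict sign condition $\lambda(t_0)\cdot\undim M'<0$.

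Next I would use $\lambda(t_0)=\gamma(t_0)$ to rewrite this as $\gamma(t_0)\cdot\undim M'<0$, and then unfold the definition of $\text{int}D(M)$. The pair $(M,t_0)$ is $\gamma$-stable precisely when $\gamma(t_0)\cdot\undim M=0$ (already given) together with $\gamma(t_0)\cdot\undim M'<0$ for every nonzero proper submodule $M'\subsetneq M$. Taking the conjunction over all such $M'$, $\gamma$-stability of $(M,t_0)$ is equivalent to \eqref{boxed eq}, and both directions of the theorem follow simultaneously.

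There is no serious obstacle here; the argument is essentially bookkeeping once the monotonicity is isolated. The one subtlety worth flagging is strictness: in the degenerate case $\lambda(t_0)\cdot\undim M'=0$ one has $t_{M'}=t_0$, so $\lambda(t')\in H(M')$ fails for every $t'>t_0$. Thus the ``$t'>t_0$'' formulation really encodes the \emph{interior} condition that defines $\gamma$-stability and not merely the boundary condition $\gamma(t_0)\in D(M)$ that would characterize $\gamma$-semistability.
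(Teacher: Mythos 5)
Your proof is correct and is essentially the paper's argument: the paper routes the same computation through the central charge $Z(x)=-\aaa\cdot x+i\bbb\cdot x$ and Proposition \ref{prop: relation between Z and D(M)}, where the identity $0=\lambda(t')\cdot \undim M'=(t'-t_0)\,\bbb\cdot\undim M'+\lambda(t_0)\cdot\undim M'$ is exactly your monotonicity observation that $t'>t_0$ iff $\lambda(t_0)\cdot\undim M'<0$. Your direct unwinding of $\mathrm{int}\,D(M)$, and your remark distinguishing the strict (interior) condition from the semistable boundary case, match the paper's reasoning.
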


\begin{proof} Let $Z$ be the central charge given by $Z(x)=-\aaa\cdot x+i\bbb\cdot x$. Then the condition $\lambda(t_0)\in H(M)$ is equivalent to the equation $t_0=\sigma_Z(M)$ and \eqref{boxed eq} above is equivalent to $t'=\sigma_Z(M')>t_0$. This equation for all nonzero $M'\subsetneq M$ is equivalent to the statement that $\lambda(t_0)=\gamma(t_0)\in \text{int}D(M)$. See Proposition \ref{prop: relation between Z and D(M)} and its proof for details.
\end{proof}

\begin{rem}\label{rem: dual linear approximation}
Equivalently, $(M,t_0)$ is $\gamma$-stable if and only if, for every nonzero proper quotient module $M''$ of $M$, there is a $t''<t_0$ so that $\lambda(t'')\in H(M'')$.
\end{rem}

\subsection{Affine quivers of type $\widetilde A$}\label{ss affine A}

In this paper we are interested in quivers of type $\widetilde A_{a,b}$. These have $n=a+b$ vertices labeled with integers modulo $n$ and $n$ arrows arranged in a circle with $a$ arrows going clockwise and $b$ going counterclockwise. There are $2^n$ possible orientations of such a quiver which we indicate with a sign function $\vare:[1,n]\to \{+,-\}$ or, equivalently, an $n$-periodic sign function $\vare:\ZZ\to \{+,-\}$. The sign $\vare_i$ is positive or negative depending on whether there is an arrow $i\leftarrow i+1$ or $i\to i+1$, respectively.

For example we have:
\begin{equation}\label{eq: quiver of type tilde A23}
\xymatrix{
\widetilde A_{2,3}^{-++--}:&1\ar[r]^{-} & 2 & 3\ar[l]_{+} & 4\ar[l]_{+}\ar[r]^{-} & 5 \ar@/_2pc/[llll]_{-}
	}
\end{equation}
The exponent indicates the sign function. 

All quivers of type $\widetilde A_{a,b}$, $a,b\ge1$, are of infinite type. They have infinitely many indecomposable representations up to isomorphism. However, in a maximal green sequence it is well-known that only ``string modules'' occur. These are the modules $M_{ij}$, $i<j$ which we now describe.

We write the finite cyclic quiver as an infinite $n$-periodic linear quiver:
\[
\xymatrix{
\cdots\ar[r]^{\alpha_{-1}}& 0\ar[r]^{\alpha_0}& 1\ar[r]^{\alpha_1} & 2 & 3\ar[l]_{\alpha_2} & 4\ar[l]_{\alpha_3}\ar[r]^{\alpha_4} & 5 \ar[r]^{\alpha_5} & 6\ar[r]^{\alpha_6} & 7 & 8\ar[l]_{\alpha_7} & \cdots\ar[l]_{\alpha_8}
	}
\]
with vertices and arrows labeled with integers so that the direction of $\alpha_i$ is the same as that of $\alpha_{i+kn}$ for any integer $k$. The direction of these arrows is given by the $n$-periodic version of the sign function 
\[
	\vare=(\cdots,-,-,-,+,+,-,-,-,+,+,\cdots)
\]
which we denote $\vare=(\vare_1,\cdots,\vare_n)$ which is $(-,+,+,-,-)$ in this case. This periodic linear quiver is the universal covering of the quiver of type $\widetilde A_{2,3}^{-++--}$ from \eqref{eq: quiver of type tilde A23}. The representation $M_{ij}$ of $\widetilde A_{a,b}^\vare$ is ``pushed-down'' from the indecomposable representation $\widetilde M_{ij}$ of the infinite quiver with dimension vector $e_{i+1}+\cdots+e_j$, just as in the finite case. The {\bf push-down} $\overline M$ of a representation $M$ of the infinite quiver is given by $\overline M_i=\bigoplus_k M_{i+kn}$ with negative arrows $i\to i+1$ inducing $\sum_k M_{\alpha_{i+kn}}:M_{i+kn}\to M_{i+kn+1}$ and similarly for positive arrows.


%
%

\setcounter{section}3

\section{Wire diagrams for $\widetilde A_{a,b}$}\label{sec4} 

Any green path can be written as 
\[
	\gamma(t)=t\bbb(t)-\aaa(t)
\]
where $\aaa,\bbb:\RR\to \RR^n$ are $C^1$ functions with velocity vectors $\aaa'(t),\bbb'(t)$ equal to zero for $|t|$ large (giving four vectors $\aaa(\infty),\aaa(-\infty),\bbb(\infty),\bbb(-\infty)$) and so that $b_i(t)>0$ for all $t$ and $i$. For example, we could let $\bbb(t)$ be the constant vector $\bbb=(1,1,\cdots,1)$ and let $\aaa(t)=t\bbb-\gamma(t)$. Although the decomposition of $\gamma$ into the two parts $t\bbb(t)$ and $-\aaa(t)$ is not unique, it gives a very useful interpretation of $\gamma$ for quivers of type $A_n$ and $\widetilde A_{a,b}$. Also, in Theorem \ref{thm: linear approximation} we can take the linear green path $\lambda$ to be $\lambda_{t_0}$ given by
\begin{equation}\label{eq: lambda t0}
	\lambda_{t_0}(t):=t\bbb(t_0)-\aaa(t_0)
\end{equation}
since, clearly, $\lambda_{t_0}(t_0)=\gamma(t_0)$.

\subsection{Slope} Given a fixed decomposition $\gamma(t)=t\bbb(t)-\aaa(t)$, we can define the {\bf slope} of any nonzero $\Lambda$-module $M$ at time $t$ by
\[
	\sigma_t(M):=\frac{\aaa(t)\cdot \undim M}{\bbb(t)\cdot \undim M}.
\]
We have the following easy calculation.
\begin{equation}\label{eq: when g(t) is in H(M)}
\sigma_t(M)=t \ifff \gamma(t)\cdot \undim M=0\ifff \gamma(t)\in H(M).
\end{equation}

The following characterization of stable pairs follows from Theorem \ref{thm: linear approximation} using $\lambda=\lambda_{t_0}$.

\begin{thm}\label{thm: characterization of stability using slope} Let $\gamma$ be a green path with slope function $\sigma_t$. Then $(M,t_0)$ is a $\gamma$-stable pair, i.e., $\gamma(t_0)\in $\emph{ int}$D(M)$, if and only if the following conditions are satisfied.
\begin{enumerate}
\item $t_0=\sigma_{t_0}(M)$
\item $\sigma_{t_0}(M')> t_0$, for all nonzero $M'\subsetneq M$.
\end{enumerate}
\end{thm}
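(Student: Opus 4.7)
The plan is to deduce this directly from Theorem \ref{thm: linear approximation} by choosing the linear green path $\lambda=\lambda_{t_0}$ defined in \eqref{eq: lambda t0}. First I would verify that $\lambda_{t_0}(t)=t\bbb(t_0)-\aaa(t_0)$ is actually a linear green path in the sense of Section \ref{sec3}: its direction vector is $\bbb(t_0)$, which has all positive coordinates since $\bbb(t)$ does at every $t$, so conditions (1)--(3) of Definition \ref{def: green path} hold automatically. Also $\lambda_{t_0}(t_0)=t_0\bbb(t_0)-\aaa(t_0)=\gamma(t_0)$ by construction, so the hypothesis $\lambda(t_0)=\gamma(t_0)\in H(M)$ in Theorem \ref{thm: linear approximation} becomes the condition $\gamma(t_0)\in H(M)$, which by \eqref{eq: when g(t) is in H(M)} is the same as condition (1) $t_0=\sigma_{t_0}(M)$.

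Next I would translate the hypothesis \eqref{boxed eq} of Theorem \ref{thm: linear approximation} into condition (2). The key computation is that for any nonzero submodule $M'\subseteq M$,
\[
\lambda_{t_0}(t')\cdot\undim M'=t'\,\bbb(t_0)\cdot\undim M'-\aaa(t_0)\cdot\undim M'.
\]
Since $\bbb(t_0)\cdot\undim M'>0$, the equation $\lambda_{t_0}(t')\in H(M')$ has a unique solution
\[
t'=\frac{\aaa(t_0)\cdot\undim M'}{\bbb(t_0)\cdot\undim M'}=\sigma_{t_0}(M').
\]
So the statement ``there exists $t'>t_0$ with $\lambda_{t_0}(t')\in H(M')$'' is simply the statement $\sigma_{t_0}(M')>t_0$, which is condition (2) of the theorem. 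Putting the two translations together, Theorem \ref{thm: linear approximation} applied to $\lambda_{t_0}$ gives exactly the equivalence claimed.

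There is no real obstacle here beyond checking that the linear approximation $\lambda_{t_0}$ is legitimate and that the slope function $\sigma_{t_0}(M')$ really does record the unique crossing time of $\lambda_{t_0}$ with $H(M')$. The only subtlety worth flagging explicitly is that $\lambda_{t_0}$ agrees with $\gamma$ only at the single time $t_0$; what allows us to conclude anything about $\gamma$-stability from $\lambda_{t_0}$-crossings is precisely Theorem \ref{thm: linear approximation}, whose proof reduces the $\gamma$-stability of $(M,t_0)$ to the condition $\gamma(t_0)\in\text{int}D(M)$, a purely pointwise condition that depends only on the value $\gamma(t_0)=\lambda_{t_0}(t_0)$ and not on the global shape of $\gamma$.
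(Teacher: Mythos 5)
Your proposal is correct and follows essentially the same route as the paper: both apply Theorem \ref{thm: linear approximation} with $\lambda=\lambda_{t_0}$, use \eqref{eq: when g(t) is in H(M)} to identify condition (1) with $\gamma(t_0)\in H(M)$, and observe that $\sigma_{t_0}(M')$ is the unique crossing time of $\lambda_{t_0}$ with $H(M')$, so that \eqref{boxed eq} becomes condition (2). Your extra checks (positivity of $\bbb(t_0)$, the pointwise nature of the stability condition) are sound but not needed beyond what the paper already records.
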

Note that, since the numerator of $\sigma_{t_0}(M)-\sigma_{t_0}(M')$ is equal to the numerator of $\sigma_{t_0}(M/M')-\sigma_{t_0}(M)$, condition (2) is equivalent to the following condition:\vs2

(3) \emph{$\sigma_{t_0}(M'')<t_0$ for all proper quotient modules $M''$ of $M$.
}\vs2

\begin{proof}
By \eqref{eq: when g(t) is in H(M)}, (1) is equivalent to the condition $\gamma(t_0)\in H(M)$. Now apply Theorem \ref{thm: linear approximation} using $\lambda=\lambda_{t_0}$ defined in \eqref{eq: lambda t0}. For any $M'\subsetneq M$, the formula for $\sigma_{t_0}$ shows that $t'=\sigma_{t_0}(M')$ is the unique real number so that $\lambda_{t_0}(t')\in H(M')$. Therefore, (2) is equivalent to the condition $t'>t_0$ in Theorem \ref{thm: linear approximation} which is equivalent to $(M,t_0)$ being $\gamma$-stable.
\end{proof}

Next, we use the well-known fact that, for $\Lambda$ hereditary, the modules $M$ in any maximal green sequence are {\bf exceptional} which means that $\End_\Lambda(M)=K$ and $\Ext_\Lambda^1(M,M)=0$. We also use the well-known fact that $\Lambda$ of type $\widetilde A_{a,b}$ is a ``gentle algebra'' in which all exceptional modules are ``string modules'' of the form $M_{ij}$ for $i<j$ with $j-i$ not divisible by $n$ with dimension vector
\[
	\undim M_{ij}=\beta_{ij}=\sum_{k=i+1}^j e_k
\]
where $e_k$ is the $p$-th unit vector for $p\in[1,n]$ congruent to $k$ modulo $n$. The indices $i,j$ satisfy one more condition: when $\vare_i=\vare_j$, we must have $|i-j|<n$, otherwise $M_{ij}$ is not exceptional. This condition also appears later in Proposition \ref{prop: regular stable modules}.

\subsection{The functions $f_i(t)$} For $\Lambda$ of type $\widetilde A_{a,b}$, the ``wires'' in our ``wire diagram'' are the graphs of functions $f_i:\RR\to\RR$, $i\in\ZZ$, given by
\begin{equation}\label{functions fi}
	f_i(t)=m_i(t)t+c_i(t)
\end{equation}
where $c_i=a_1+a_2+\cdots+a_i$, taking indices modulo $n$, and $m_i=m_0-b_1-b_2-\cdots-b_i$ for some fixed smooth function $m_0:\RR\to\RR$.

\begin{lem}\label{lem: Tij=gamma-1 H(Mij)}
For $i<j$ and $t_0\in \RR$, the following are equivalent.
\begin{enumerate}
\item $f_i(t_0)=f_j(t_0)$. 
\item $\gamma(t_0)\in H(M_{ij})$. 
\end{enumerate} 
\end{lem}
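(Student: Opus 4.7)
The plan is a direct unpacking: both sides of the equivalence encode the same linear identity in $\aaa(t_0)$ and $\bbb(t_0)$, so the lemma reduces to a bookkeeping computation with the cyclic indexing.

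First I would rewrite condition (2) using the fact that $H(M_{ij}) = (\undim M_{ij})^\perp$ is the hyperplane orthogonal to the dimension vector $\beta_{ij} = \sum_{k=i+1}^j e_k$, with the standard basis vectors $e_k$ interpreted modulo $n$. This turns (2) into the single scalar equation $\gamma(t_0) \cdot \beta_{ij} = 0$. Next, using the decomposition $\gamma(t_0) = t_0\,\bbb(t_0) - \aaa(t_0)$, I would split
\[
\gamma(t_0) \cdot \beta_{ij} \;=\; t_0\bigl(\bbb(t_0)\cdot \beta_{ij}\bigr) \;-\; \aaa(t_0)\cdot \beta_{ij}.
\]

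Then I would extend $a_k$ and $b_k$ to all integers $k$ by $n$-periodicity (the same convention underlying the definitions of $c_i$ and $m_i$ in \eqref{functions fi}) and recognize the telescoping identities
\[
\aaa(t_0)\cdot \beta_{ij} = \sum_{k=i+1}^{j} a_k(t_0) = c_j(t_0) - c_i(t_0), \qquad \bbb(t_0)\cdot \beta_{ij} = \sum_{k=i+1}^{j} b_k(t_0) = m_i(t_0) - m_j(t_0).
\]
Substituting back into the split above gives
\[
\gamma(t_0)\cdot \beta_{ij} \;=\; \bigl(t_0\, m_i(t_0) + c_i(t_0)\bigr) - \bigl(t_0\, m_j(t_0) + c_j(t_0)\bigr) \;=\; f_i(t_0) - f_j(t_0),
\]
from which the equivalence of (1) and (2) is immediate.

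The only point demanding care is the cyclic bookkeeping when $j - i \ge n$: in that range $\beta_{ij}$ has coordinates greater than $1$, but the linearly-indexed telescoping in the definitions of $c_i$ and $m_i$ absorbs these multiplicities automatically through the $n$-periodicity convention, so the identity holds without modification. No deeper obstacle is expected; the lemma is essentially a definitional check confirming that the wires $L_i$ were set up precisely so that the crossing $L_i \cap L_j$ records when $\gamma$ meets the hyperplane $H(M_{ij})$.
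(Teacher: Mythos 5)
Your proposal is correct and is essentially the paper's own argument: the paper likewise computes $f_i(t_0)-f_j(t_0)=(b_{i+1}+\cdots+b_j)t_0-(a_{i+1}+\cdots+a_j)$ and identifies its vanishing with $\gamma(t_0)\cdot\undim M_{ij}=0$ (the paper phrases this via $t_0=\sigma_{t_0}(M_{ij})$ and \eqref{eq: when g(t) is in H(M)}, while you avoid the division, which is immaterial since $\bbb\cdot\undim M_{ij}>0$). Your remark about the $n$-periodic indexing absorbing the multiplicities of $\beta_{ij}$ when $j-i\ge n$ is accurate and matches the conventions of \eqref{functions fi}.
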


\begin{proof} Since $f_i(t)-f_j(t)=(b_{i+1}+\cdots+b_j)t-(a_{i+1}+\cdots+a_j)$ the values of $t_0$ so that $f_i(t_0)=f_j(t_0)$ are given by
\[
	t_0=\frac{a_{i+1}+\cdots+a_j}{b_{i+1}+\cdots+b_j}=\frac{\aaa(t)\cdot \undim M_{ij}}{\bbb(t)\cdot \undim M_{ij}}=\sigma_{t_0}(M_{ij})
\]
So, (1) is equivalent to $t_0=\sigma_{t_0}(M_{ij})$ which, by \eqref{eq: when g(t) is in H(M)}, is equivalent to (2).
\end{proof}

Let $T_{ij}=\{t_{ij}\}$ denote the set of all real numbers so that $f_i(t_{ij})=f_j(t_{ij})$. Thus $T_{ij}=T_{ji}=\gamma^{-1}H(M_{ij})$ by Lemma \ref{lem: Tij=gamma-1 H(Mij)} above. Since $f_i(t)<f_j(t)$ for $t<<0$ and $f_i(t)>f_j(t)$ for $t>>0$ (when $i<j$), $T_{ij}$ is always nonempty. The smallest and largest elements of $T_{ij}$ will be denoted $t_{ij}^0$ and $t_{ij}^1$ respectively.

The order theorems, from Section \ref{sec32}, which hold for any finite dimensional algebra imply the following for $\widetilde A_{a,b}$.

\begin{lem}\label{lem: tij stable}
Let $M_{ij}$ be $\gamma$-stable for a green path $\gamma$. Then $t_{ij}\in T_{ij}$ is unique.
\end{lem}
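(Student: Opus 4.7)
The plan is to reduce this immediately to the general uniqueness statement for stable times, Lemma \ref{lem: uniqueness of tM}, using the identification of $T_{ij}$ as a $\gamma$-preimage of a hyperplane provided by Lemma \ref{lem: Tij=gamma-1 H(Mij)}. By hypothesis $M_{ij}$ is $\gamma$-stable, so there exists at least one time $t_0 \in \RR$ for which $(M_{ij}, t_0)$ is a $\gamma$-stable pair, i.e., $\gamma(t_0) \in \text{int}\, D(M_{ij}) \subseteq H(M_{ij})$.

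Next I would apply Lemma \ref{lem: uniqueness of tM}, whose statement is completely general (valid for arbitrary finite dimensional $\Lambda$, with no hypothesis beyond $\gamma$-stability of the pair). It yields
\[
\{t_0\} \;=\; \gamma^{-1}H(M_{ij}).
\]
Combining with Lemma \ref{lem: Tij=gamma-1 H(Mij)}, which gives $T_{ij} = \gamma^{-1}H(M_{ij})$, we conclude $T_{ij} = \{t_0\}$. Hence $T_{ij}$ is a singleton, and we denote its unique element $t_{ij}$.

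There is really no obstacle here; the content of the lemma is just the specialization of the general uniqueness result to the wire-diagram description of $H(M_{ij})$ for quivers of type $\widetilde A_{a,b}$. What Lemma \ref{lem: uniqueness of tM} is doing under the hood, and what makes the conclusion nontrivial, is that the two bounding graphs $x \mapsto h_x(\mu(x))$ and $x \mapsto h_x(\nu(x))$ in the proof of Theorem \ref{thm: order thms, general} sandwich $\gamma$ so tightly that, once $\gamma$ has passed into $\text{int}\, D(M_{ij})$ in the green direction, it cannot return to $H(M_{ij}) \setminus D(M_{ij})$ on either side and cannot cross $D(M_{ij})$ a second time. All of this is already established, so the present lemma is a one-line corollary.
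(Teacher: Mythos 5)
Your proof is correct and is essentially identical to the paper's: the paper also deduces the lemma in one line by combining Lemma \ref{lem: uniqueness of tM} with the identification $T_{ij}=\gamma^{-1}H(M_{ij})$ from Lemma \ref{lem: Tij=gamma-1 H(Mij)}. Your additional commentary on what Lemma \ref{lem: uniqueness of tM} is doing ``under the hood'' is accurate but not needed for the argument.
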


\begin{proof}
By Lemma \ref{lem: uniqueness of tM}, $\gamma^{-1}H(M_{ij})=T_{ij}$ has only one element.
\end{proof}

\begin{thm}\label{thm: stable iff blue above and red below} Suppose that $t_{ij}$ is unique. Then $(M_{ij},t_{ij})$ is $\gamma$-stable if and only if
\begin{enumerate}
\item for all $i<k<j$ with $\varepsilon_k=+$ we have $f_k(t_{ij})>f_i(t_{ij})$ and 
\item for all $i<\ell<j$ with $\varepsilon_\ell=-$ then $f_\ell(t_{ij})<f_i(t_{ij})$.
\end{enumerate}
\end{thm}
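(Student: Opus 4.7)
The plan is to apply Theorem \ref{thm: characterization of stability using slope} directly, using the linear approximation $\lambda_{t_{ij}}$ to translate slopes of submodules into comparisons of wire values at $t_{ij}$. Uniqueness of $t_{ij}$ together with Lemma \ref{lem: Tij=gamma-1 H(Mij)} identifies $t_{ij}$ with the single element of $T_{ij}=\gamma^{-1}H(M_{ij})$, so condition (1) of that theorem, $t_{ij}=\sigma_{t_{ij}}(M_{ij})$, holds automatically. What remains is to show that condition (2), $\sigma_{t_{ij}}(M')>t_{ij}$ for every nonzero proper submodule $M'\subsetneq M_{ij}$, is equivalent to the two inequalities on $f_k$ and $f_\ell$ in the statement.

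Next I would enumerate the proper indecomposable submodules of $M_{ij}$. Lifting $M_{ij}$ to its universal cover $\widetilde M_{ij}$ on the infinite $n$-periodic quiver and applying the type-$A$ classification (the proposition in \ref{ss11}), the indecomposable subrepresentations are the string modules $M_{\ell k}$ with $i\le\ell<k\le j$ subject to ``$\ell=i$ or $\vare_\ell=-$'' and ``$k=j$ or $\vare_k=+$.'' Excluding $(\ell,k)=(i,j)$ yields three families: (A) $M_{ik}$ with $i<k<j$, $\vare_k=+$; (B) $M_{\ell j}$ with $i<\ell<j$, $\vare_\ell=-$; and (C) $M_{\ell k}$ with $i<\ell<k<j$, $\vare_\ell=-$, $\vare_k=+$.

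Then I would convert each slope inequality to a wire comparison via the identity
\[
f_p(t_0)-f_q(t_0)=\bigl(b_{p+1}(t_0)+\cdots+b_q(t_0)\bigr)\bigl(t_0-\sigma_{t_0}(M_{pq})\bigr),
\]
valid for any $p<q$ and immediate from \eqref{functions fi}. Since every $b_i(t_0)>0$, this yields the key equivalence: $\sigma_{t_0}(M_{pq})>t_0$ if and only if $f_p(t_0)<f_q(t_0)$. Applying this with $t_0=t_{ij}$ to families (A), (B), (C) turns the slope condition $\sigma_{t_{ij}}(M')>t_{ij}$ into, respectively, $f_i(t_{ij})<f_k(t_{ij})$, $f_\ell(t_{ij})<f_j(t_{ij})=f_i(t_{ij})$, and $f_\ell(t_{ij})<f_k(t_{ij})$.

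Finally I would observe that the inequalities from family (C) are redundant: assuming (A) and (B), we get $f_\ell(t_{ij})<f_i(t_{ij})<f_k(t_{ij})$ for any admissible pair, so $f_\ell(t_{ij})<f_k(t_{ij})$ follows automatically. Thus the full set of submodule slope conditions collapses to exactly the two inequalities in the theorem. The main delicacy lies in the submodule classification: one must confirm that no additional submodules appear from the cyclic identifications when pushing down from the cover, which is where the implicit restriction to exceptional $M_{ij}$ (i.e., $j-i$ not divisible by $n$, with $j-i<n$ when $\vare_i=\vare_j$) is used.
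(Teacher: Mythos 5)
Your proposal is correct and follows essentially the same route as the paper: both reduce the claim to the linear approximation at $t_{ij}$ (you via Theorem \ref{thm: characterization of stability using slope}, the paper directly via Theorem \ref{thm: linear approximation} with $\lambda=\lambda_{t_{ij}}$), and both use the identical enumeration of proper indecomposable submodules $M_{ik}$, $M_{\ell j}$, $M_{\ell k}$ with the third family's condition following from the first two. The only cosmetic difference is that you convert slope inequalities to wire inequalities through the explicit factorization $f_p(t_0)-f_q(t_0)=\bigl(b_{p+1}(t_0)+\cdots+b_q(t_0)\bigr)\bigl(t_0-\sigma_{t_0}(M_{pq})\bigr)$, whereas the paper reads the same comparisons off the auxiliary linear wires $L^\lambda_i$; your caveat about the submodule classification for the pushed-down exceptional modules is, if anything, slightly more careful than the paper's bare assertion.
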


\begin{rem}\label{rem: red and blue lines}
Paraphrased: $M_{ij}$ is stable if and only if $L_i$, $L_j$ meet at only one point $t_{ij}$ and all positive (blue) curves $L_k$ with $i<k<j$ go over that point and all negative (red) curves $L_\ell$ with $i<\ell<j$ go under that point.
\end{rem}

\begin{proof} We again use Theorem \ref{thm: linear approximation} with $\lambda(t)=t\bbb(t_{ij})-\aaa(t_{ij})$. Let $L_i^\lambda,L_j^\lambda$, etc. denote the linear wires for $\lambda$. Let $t_{ij}^\lambda$ be the $x$-coordinate of $L^\lambda_i\cap L^\lambda_j$. If $L^\lambda_k$ lies above $L^\lambda_i\cap L^\lambda_j$ and $L^\lambda_\ell$ lies below $L^\lambda_i\cap L^\lambda_j$ then the $t$ coordinates of their intersection points are arranged:
\[
	t_{ij}=t^\lambda_{ij}<t^\lambda_{ik},t^\lambda_{\ell j}
\]Also, $t^\lambda_{ij}<t^\lambda_{\ell k}$ when $\ell<k$.
Since $\varepsilon_k=+$ and $\vare_\ell=-$, $M_{ik}\subsetneq M_{ij}$, $M_{\ell j}\subsetneq M_{ij}$ and $M_{\ell k}\subsetneq M_{ij}$ when $\ell<k$. These are all of the proper indecomposable submodules of $M_{ij}$. So, by Theorem \ref{thm: linear approximation}, $M_{ij}$ is $\gamma$-stable. 
\begin{center}
\begin{tikzpicture}[scale=.8]
\clip (-6,-1.5) rectangle (6,2.3);
\coordinate (A) at (3.47,0.61);
\coordinate (B) at (-.28,0.14);
\coordinate (C) at (.95,0.83);
\coordinate (D) at (.34,-.17);
\begin{scope}
	\draw[very thick] (3,-1.5)--(-4,2) node[left]{$L^\lambda_j$}
	(3,2)--(-4,-2) (3,2)node[right]{$L^\lambda_i$};
\end{scope}
\draw[thick,color=blue] (5,0.5)--(-5,1.3) node[left]{$L^\lambda_k$};
\draw[thick,color=red!80!black] (-5,-1.5)--(5,1) node[right]{$L^\lambda_\ell$};
\draw[fill] (A) circle[radius=2pt] (A) node[below]{$t^\lambda_{\ell k}$};
\draw[fill] (B) circle[radius=2pt] (B) node[above]{$t_{ij}$};
\draw[fill] (C) circle[radius=2pt] (C) node[above]{$t^\lambda_{ik}$};
\draw[fill] (D) circle[radius=2pt] (D) node[below]{$t^\lambda_{\ell j}$};
\end{tikzpicture}
\end{center}
Conversely, if $M_{ij}$ is $\gamma$-stable then, by Theorem \ref{thm: linear approximation}, $t_{ij}$ must be less than $t^\lambda_{ik}$ and $t^\lambda_{\ell j}$ for $i<k,\ell<j$, $\vare_k=+$, $\vare_\ell=-$. This implies that the straight line $L^\lambda_k$ goes above $L^\lambda_i\cap L^\lambda_j$ and $L^\lambda_\ell$ goes below $L^\lambda_i\cap L^\lambda_j$. But $\gamma(t_{ij})=\lambda(t_{ij})$. So, $f_\ell(t_{ij})<f_i(t_{ij})<f_k(t_{ij})$ as claimed.
\end{proof}

When $\gamma$ is linear, we can reinterpret Theorem \ref{thm: stable iff blue above and red below} in terms of {\bf chords} $V_{ij}$ as follows.

\begin{cor}\label{cor: stability in terms of chords Vij}
Let $\gamma(t)=\bbb t-\aaa$ be the linear green path with $\aaa,\bbb\in\RR^n$ constant with $b_i>0$ for every $i$, i.e., $\gamma=\lambda_Z$ for the central charge $Z$ given by $Z(x)=\aaa\cdot x+i\bbb\cdot x$. Let $p_i\in \RR^2$ be given by 
\[
	p_i=(b_1+b_2+\cdots+b_i,a_1+a_2+\cdots+a_i)
\]
Let $V_{ij}$ be the line segment from $p_i$ to $p_j$.
Then $M_{ij}$ is $\gamma$-stable if and only if the following two conditions are satisfied:
\begin{enumerate}
\item For every $i<k<j$ with $\varepsilon_k=+$ the point $p_k$ lies above the line segment $V_{ij}$.
\item For every $i<\ell<j$ with $\varepsilon_\ell=-$ the point $p_\ell$ lies below $V_{ij}$.
\end{enumerate}
\end{cor}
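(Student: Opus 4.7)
The plan is to derive this corollary as a direct chord-diagram reformulation of Theorem~\ref{thm: stable iff blue above and red below}, in exact analogy with how Theorem~\ref{thm: stability of chords} follows from Theorem~\ref{thm: stability for linear wires} in the $A_n$ setting. First I would verify that the uniqueness hypothesis of Theorem~\ref{thm: stable iff blue above and red below} is automatic: since $\gamma$ is linear, $\aaa$ and $\bbb$ are constant, so each wire $L_i$ is a genuine straight line of slope $m_i=m_0-b_1-\cdots-b_i$, and $f_i-f_j$ is a strictly monotone linear function because all $b_k>0$. Hence $T_{ij}=\{t_{ij}\}$ with
\[
	t_{ij}=\frac{a_{i+1}+\cdots+a_j}{b_{i+1}+\cdots+b_j},
\]
which, by the definition of $p_i$, is precisely the slope of the chord $V_{ij}$ in the plane.

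The remaining step is the purely geometric translation ``$L_k$ above $L_i\cap L_j$ at $t_{ij}$'' $\iff$ ``$p_k$ above $V_{ij}$.'' For $i<k<j$, I would observe that $f_i(t)-f_k(t)=(b_{i+1}+\cdots+b_k)t-(a_{i+1}+\cdots+a_k)$ is increasing in $t$ and vanishes at $t_{ik}$, so $f_k(t_{ij})>f_i(t_{ij})$ if and only if $t_{ik}>t_{ij}$. Since $p_i,p_k,p_j$ have strictly increasing $x$-coordinates (as every $b_k>0$), the inequality $t_{ik}>t_{ij}$ on slopes measured from the common endpoint $p_i$ is equivalent to $p_k$ lying strictly above the chord $V_{ij}$. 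The negative case is the mirror argument: $f_\ell(t_{ij})<f_i(t_{ij})$ holds precisely when $t_{i\ell}<t_{ij}$, i.e., when the slope from $p_i$ to $p_\ell$ is less than the slope from $p_i$ to $p_j$, which says exactly that $p_\ell$ lies below $V_{ij}$.

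Combining these two equivalences with Theorem~\ref{thm: stable iff blue above and red below} yields the corollary. The ``main obstacle'' is essentially cosmetic: this is a dictionary translation between two coordinate systems for the same linear data (wires vs.\ chords), and the only genuine care needed is to keep track of the signs of the slopes of the differences $f_i-f_k$ and $f_i-f_\ell$. Nothing beyond the earlier wire-diagram theorem and an elementary planar slope comparison is used, and indeed the statement is best viewed as a restatement of that theorem for readers who prefer the chord picture.
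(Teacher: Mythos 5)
Your proposal is correct and follows essentially the same route as the paper: the corollary is obtained by specializing Theorem~\ref{thm: stable iff blue above and red below} to a linear path (where uniqueness of $t_{ij}$ is automatic) and translating the conditions $f_k(t_{ij})>f_i(t_{ij})$, $f_\ell(t_{ij})<f_i(t_{ij})$ into slope comparisons between $V_{ij}$ and the chords through the intermediate vertex, which is exactly how the paper argues. The only cosmetic difference is that the paper compares the slope of $V_{ij}$ with those of $V_{ik}$ and $V_{\ell j}$ while you use $V_{ik}$ and $V_{i\ell}$; these are equivalent statements about the position of $p_k$, $p_\ell$ relative to $V_{ij}$.
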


\begin{proof}
The slope of $V_{ij}$, which is equal to $\sigma_Z(M_{ij})=t_{ij}$, is less than $t_{ik}$ and $t_{\ell j}$, the slopes of $V_{ik}$ and $V_{\ell j}$, if and only if Conditions (1) and (2) hold. \end{proof}

\begin{thm} Let $M_{ij}$ be $\gamma$-stable for a green path $\gamma$. Then $t_{ij}$ is unique and we have the following.

\emph{(1)} Suppose $i<k<j$ and $\vare_k=+$. Then
\[
	t^1_{kj}<t_{ij}<t^0_{ik}.
\]

\emph{(2)} Similarly, for $i<\ell<j$, and $\vare_\ell=-$ we have
\[
	t^1_{i\ell}<t_{ij}<t^0_{\ell j}.
\]
Conversely, when $(1)$, $(2)$ both hold and $t_{ij}$ is unique we have:
\[
f_\ell(t_{ij})<f_i(t_{ij})=f_j(t_{ij})<f_k(t_{ij}).
\]
\end{thm}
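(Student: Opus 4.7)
The plan is to derive parts (1) and (2) as immediate consequences of Theorem \ref{thm: order thms, general} by identifying the correct submodule/quotient pair of $M_{ij}$, and then to deduce the converse inequalities on the $f_i$'s from an elementary sign analysis of $f_i-f_k$ at $-\infty$.

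Uniqueness of $t_{ij}$ is exactly Lemma \ref{lem: tij stable}. For part (1), fix $i<k<j$ with $\vare_k=+$. Using the universal cover description from \S\ref{ss affine A} together with the type-$A_n$ submodule criterion of \S\ref{ss11}, $M_{ik}$ is a proper submodule of $M_{ij}$ with proper quotient $M_{kj}$, sitting in a short exact sequence
\[
0\to M_{ik}\to M_{ij}\to M_{kj}\to 0.
\]
Theorem \ref{thm: order thms, general}(a) forces $T_{ik}=\gamma^{-1}H(M_{ik})\subseteq(t_{ij},\infty)$, so in particular $t^0_{ik}>t_{ij}$, and Theorem \ref{thm: order thms, general}(b) forces $T_{kj}\subseteq(-\infty,t_{ij})$, giving $t^1_{kj}<t_{ij}$. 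Part (2) is symmetric: when $\vare_\ell=-$, the corresponding sequence $0\to M_{\ell j}\to M_{ij}\to M_{i\ell}\to 0$ identifies $M_{\ell j}$ as a submodule and $M_{i\ell}$ as a quotient, so the same two parts of Theorem \ref{thm: order thms, general} yield $t^1_{i\ell}<t_{ij}<t^0_{\ell j}$.

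For the converse, assume (1), (2), and that $t_{ij}$ is unique. The equality $f_i(t_{ij})=f_j(t_{ij})$ is built into the definition of $T_{ij}$. For any $i<k<j$ with $\vare_k=+$, the formula
\[
f_i(t)-f_k(t)=(b_{i+1}(t)+\cdots+b_k(t))\,t\;-\;(a_{i+1}(t)+\cdots+a_k(t))
\]
combined with the fact that the $b_p(t)$ are positive and eventually constant shows $f_i-f_k\to-\infty$ as $t\to-\infty$; since its zero set is $T_{ik}$, continuity forces $f_i-f_k<0$ on the entire interval $(-\infty,t^0_{ik})$. By (1), $t_{ij}<t^0_{ik}$, so $f_i(t_{ij})<f_k(t_{ij})$. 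The same argument applied to $f_\ell-f_j$ on $(-\infty,t^0_{\ell j})$, using (2), produces $f_\ell(t_{ij})<f_j(t_{ij})=f_i(t_{ij})$ when $\vare_\ell=-$, completing the claimed chain of strict inequalities.

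The only delicate point is the identification of the relevant subrepresentations and quotients of $M_{ij}$ in type $\widetilde A_{a,b}$, which reduces to the type-$A_n$ submodule criterion via the pushdown from the universal cover; beyond this bookkeeping the proof is a clean application of Theorem \ref{thm: order thms, general} in one direction and a one-variable sign analysis in the other.
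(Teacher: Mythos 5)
Your proof is correct and follows essentially the same route as the paper: the forward inequalities come from identifying $M_{ik}$ (resp.\ $M_{\ell j}$) as a proper submodule and $M_{kj}$ (resp.\ $M_{i\ell}$) as a proper quotient of $M_{ij}$ and applying Theorem \ref{thm: order thms, general}(a),(b), with uniqueness of $t_{ij}$ from Lemma \ref{lem: tij stable}. For the converse the paper merely points to Figure \ref{Fig: OrderThms012} (``$L_k$ cannot go under $L_i\cap L_j$ without crossing $L_i$ to the left of $t_{ij}$''); your sign analysis of $f_i-f_k$ and $f_\ell-f_j$ near $t=-\infty$ is exactly that observation made analytic.
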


These three statements are illustrated in Figure \ref{Fig: OrderThms012}.
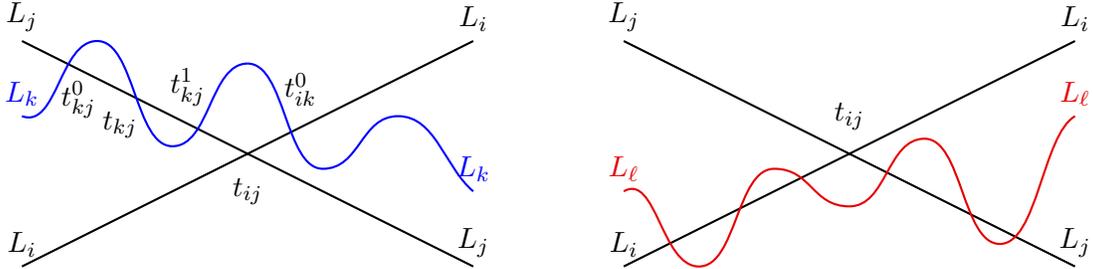
\begin{figure}[htbp]
\begin{center}
\begin{tikzpicture}
\coordinate (i0) at (0,0);
\coordinate (k0) at (0,2);
\coordinate (j0) at (0,3);
\coordinate (k01) at (1,3);
\coordinate (k02) at (2,1.6);
\coordinate (k03) at (3,2.7);
\coordinate (k04) at (4,1.3);
\coordinate (k05) at (5,2);
\coordinate (j1) at (6,0);
\coordinate (k1) at (6,1);
\coordinate (i1) at (6,3);
\draw (i0) node[above]{$L_i$};
\draw (j0) node[above]{$L_j$};
\draw[color=blue] (k0) node[above]{$L_k$};
\draw (i1) node[above]{$L_i$};
\draw (j1) node[above]{$L_j$};
\draw[color=blue] (k1) node[above]{$L_k$};
\draw[thick] (i0)--(i1) (j0)--(j1);
\draw (3,1)node{$t_{ij}$};
\draw (3.7,2)node[above]{$t^0_{ik}$};
\draw (.75,2.6)node[below]{$t^0_{kj}$};
\draw (1.3,2.2)node[below]{$t_{kj}$};
\draw (2.2,2)node[above]{$t^1_{kj}$};
\draw[color=blue,thick] (k0)..controls +(-20:.5cm) and +(180:.5cm).. (k01).. controls +(0:.5cm) and +(180:.5cm).. (k02) .. controls +(0:.5cm) and +(180:.5cm)..(k03);
\draw[color=blue,thick] (k03)..controls +(0:.5cm) and +(180:.5cm).. (k04).. controls +(0:.5cm) and +(180:.5cm).. (k05) .. controls +(0:.5cm) and +(140:.5cm)..(k1);
 %
\coordinate (i2) at (8,0);
\coordinate (k2) at (8,1);
\coordinate (j2) at (8,3);
\coordinate (k11) at (9,0);
\coordinate (k12) at (10,1.3);
\coordinate (k13) at (11,.8);
\coordinate (k14) at (12,1.7);
\coordinate (k15) at (13,.3);
\coordinate (j3) at (14,0);
\coordinate (k3) at (14,2);
\coordinate (i3) at (14,3);
\draw (i2) node[above]{$L_i$};
\draw (j2) node[above]{$L_j$};
\draw[color=red!90!black] (k2) node[above]{$L_\ell$};
\draw (i3) node[above]{$L_i$};
\draw (j3) node[above]{$L_j$};
\draw[color=red!90!black] (k3) node[above]{$L_\ell$};
\draw[thick] (i2)--(i3) (j2)--(j3);
\draw (11,2)node{$t_{ij}$};
\draw[color=red!90!black,thick] (k2)..controls +(30:.5cm) and +(180:.5cm).. (k11).. controls +(0:.5cm) and +(180:.5cm).. (k12) .. controls +(0:.5cm) and +(180:.5cm)..(k13);
\draw[color=red!90!black,thick] (k13)..controls +(0:.5cm) and +(180:.5cm).. (k14).. controls +(0:.5cm) and +(180:.5cm).. (k15) .. controls +(0:.5cm) and +(210:.5cm)..(k3);
\end{tikzpicture}
\caption{The wires $L_i$, $L_j$ cross only once, at $t_{ij}$, since $M_{ij}$ is stable. The statement $t_{ij}<t^0_{ik}$ (the first crossing of $L_i,L_k$) is equivalent to saying that $L_k$ does not cross $L_i$ to the left of $t_{ij}$. Similarly $t^1_{kj}<t_{ij}$ says $L_k$ does not cross $L_j$ to the right of $t_{ij}$. (All three values of $t_{kj}$ are to the left of $t_{ij}$.)}
\label{Fig: OrderThms012}
\end{center}
\end{figure}

\begin{proof}
As in the case of finite $A_n$ quivers, the positivity of $k$ implies that the $k$th arrow points to the left: $M_{ik}\leftarrow \cdots$. Therefore, $M_{ik}\subsetneq M_{ij}$. By Theorem \ref{thm: order thms, general}, $t_{ij}<t'$ for any $t'\in \gamma^{-1}H(M_{ik})$. We are using the notation $t'=t_{ik}$. So, $t_{ij}<t_{ik}$. The other three cases are similar. The converse statement follows by examination of Figure \ref{Fig: OrderThms012}. E.g., $L_k$ cannot go under $L_i\cap L_j$ without crossing $L_i$ to the left of $t_{ij}$.
\end{proof}

\begin{defn}\label{def: witness}
If $i<k<j$ with $\vare_k=+$ and either $t_{ik}^0<t_{ij}$ or $t_{ij}<t_{kj}^1$ we say that $k$ is a {\bf witness} to the instability of $M_{ij}$. Similarly, $\ell$ is a {\bf witness} to the instability of $M_{ij}$ if $i<\ell<j$ with $\vare_\ell=-$ satisfies either $t_{\ell j}^0<t_{ij}$ or $t_{ij}<t_{i\ell}^1$.
\end{defn}

\begin{thm}\label{thm: 4lines}
Let $M_{ij}$ be $\gamma$-stable for a green path $\gamma$. Then $t_{ij}$ is unique and we have the following.

\emph{(3)} Whenever $i<k<\ell<j$ with $\vare_k=+,\vare_\ell=-$ we have:
\[
	t^1_{k\ell}<t_{ij}.
\]

\emph{(4)} Whenever $i<\ell<k<j$ with $\vare_k=+,\vare_\ell=-$ we have:
\[
	t_{ij}<t^0_{\ell k}.
\]
\end{thm}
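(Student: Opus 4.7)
The plan is to derive both inequalities from Theorem \ref{thm: order thms, general} after identifying the new string module as a proper subquotient of $M_{ij}$. Uniqueness of $t_{ij}$ is immediate from Lemma \ref{lem: tij stable}, since $M_{ij}$ is $\gamma$-stable.

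For part (4), with $i<\ell<k<j$, $\vare_\ell=-$, $\vare_k=+$, I would verify that $M_{\ell k}$ is a proper submodule of $M_{ij}$. The support of $M_{\ell k}$ is $\{\ell+1,\ldots,k\}\subseteq\{i+1,\ldots,j\}$, the support of $M_{ij}$. The condition $\vare_\ell=-$ means the arrow between vertices $\ell$ and $\ell+1$ points $\ell\to\ell+1$, so no arrow in $M_{ij}$ carries the 1-dimensional subspace at $\ell+1$ out to vertex $\ell$; symmetrically, $\vare_k=+$ means the arrow between $k$ and $k+1$ points $k\leftarrow k+1$, so nothing maps out of the subspace at vertex $k$. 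Hence the 1-dimensional subspaces at vertices $\{\ell+1,\ldots,k\}$ assemble into an honest submodule isomorphic to $M_{\ell k}$. In the affine case this check is performed on the universal $n$-periodic linear cover of Section \ref{ss affine A}, where the string-submodule criterion is the same as the one recorded for $A_n$ in Section \ref{ss11}. Applying Theorem \ref{thm: order thms, general}(a) at the $\gamma$-stable pair $(M_{ij},t_{ij})$ then gives $t_{ij}<t'$ for every $t'\in T_{\ell k}=\gamma^{-1}H(M_{\ell k})$, and minimizing over $t'$ yields $t_{ij}<t^0_{\ell k}$.

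Part (3) is dual. With $i<k<\ell<j$, $\vare_k=+$, $\vare_\ell=-$, I would exhibit $M_{k\ell}$ as a proper quotient of $M_{ij}$ by showing that the complementary support $\{i+1,\ldots,k\}\cup\{\ell+1,\ldots,j\}$ carries a submodule of $M_{ij}$ whose quotient is $M_{k\ell}$. Closure of this complement under the arrow maps is controlled by the same two sign hypotheses read at the inner boundaries: $\vare_k=+$ guarantees nothing maps from vertex $k$ (in the complement) into vertex $k+1$ (outside it), and $\vare_\ell=-$ guarantees nothing maps from vertex $\ell+1$ (in the complement) into vertex $\ell$ (outside it). Quotienting by this submodule produces $M_{k\ell}$, so Theorem \ref{thm: order thms, general}(b) yields $t''<t_{ij}$ for every $t''\in T_{k\ell}$, and maximizing over $t''$ gives $t^1_{k\ell}<t_{ij}$.

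The only real work is bookkeeping: matching the sign convention to the arrow direction at the inner indices $k,\ell$, and confirming that the sub/quotient identifications on the universal cover descend to $\widetilde A_{a,b}^{\vare}$ (which is automatic since the string module $M_{k\ell}$ and its neighbors are supported on an interval shorter than $n$ when $k,\ell$ lie strictly between $i$ and $j$ with $j-i$ permissible). Once those identifications are in place, (3) and (4) reduce immediately to the already-proved order theorems, with no new analytic input required.
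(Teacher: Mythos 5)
Your proposal is correct and follows essentially the same route as the paper: identify $M_{\ell k}$ as a proper submodule of $M_{ij}$ in case (4) and $M_{k\ell}$ as a proper quotient in case (3) via the arrow pattern $\rightarrow M_{\ell k}\leftarrow$, then apply Theorem \ref{thm: order thms, general}(a),(b) and pass to the extreme elements of $T_{\ell k}$, $T_{k\ell}$. The extra detail you supply on the submodule/quotient verification at the boundary arrows is just an expansion of what the paper states in one line.
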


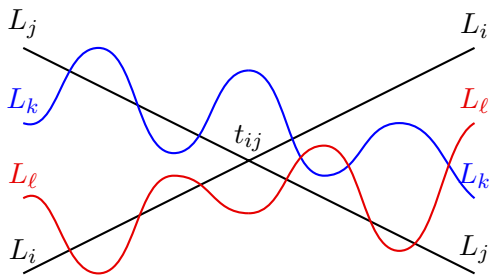
\begin{figure}[htbp]
\begin{center}
\begin{tikzpicture}
\coordinate (i0) at (0,0);
\coordinate (k0) at (0,2);
\coordinate (j0) at (0,3);
\coordinate (k01) at (1,3);
\coordinate (k02) at (2,1.6);
\coordinate (k03) at (3,2.7);
\coordinate (k04) at (4,1.3);
\coordinate (k05) at (5,2);
\coordinate (j1) at (6,0);
\coordinate (k1) at (6,1);
\coordinate (i1) at (6,3);
\draw (i0) node[above]{$L_i$};
\draw (j0) node[above]{$L_j$};
\draw[color=blue] (k0) node[above]{$L_k$};
\draw (i1) node[above]{$L_i$};
\draw (j1) node[above]{$L_j$};
\draw[color=blue] (k1) node[above]{$L_k$};
\draw[thick] (i0)--(i1) (j0)--(j1);
\draw (3,1.8)node{$t_{ij}$};
\draw[color=blue,thick] (k0)..controls +(-20:.5cm) and +(180:.5cm).. (k01).. controls +(0:.5cm) and +(180:.5cm).. (k02) .. controls +(0:.5cm) and +(180:.5cm)..(k03);
\draw[color=blue,thick] (k03)..controls +(0:.5cm) and +(180:.5cm).. (k04).. controls +(0:.5cm) and +(180:.5cm).. (k05) .. controls +(0:.5cm) and +(140:.5cm)..(k1);
 %
 \begin{scope}[xshift=-8cm]
\coordinate (i2) at (8,0);
\coordinate (k2) at (8,1);
\coordinate (j2) at (8,3);
\coordinate (k11) at (9,0);
\coordinate (k12) at (10,1.3);
\coordinate (k13) at (11,.8);
\coordinate (k14) at (12,1.7);
\coordinate (k15) at (13,.3);
\coordinate (j3) at (14,0);
\coordinate (k3) at (14,2);
\coordinate (i3) at (14,3);
\draw[color=red!90!black] (k2) node[above]{$L_\ell$};
\draw[color=red!90!black] (k3) node[above]{$L_\ell$};
\draw[color=red!90!black,thick] (k2)..controls +(30:.5cm) and +(180:.5cm).. (k11).. controls +(0:.5cm) and +(180:.5cm).. (k12) .. controls +(0:.5cm) and +(180:.5cm)..(k13);
\draw[color=red!90!black,thick] (k13)..controls +(0:.5cm) and +(180:.5cm).. (k14).. controls +(0:.5cm) and +(180:.5cm).. (k15) .. controls +(0:.5cm) and +(210:.5cm)..(k3);
\end{scope}
\end{tikzpicture}
\caption{Case (4): if $M_{ij}$ is stable and $i<\ell<k<j$ with $\vare_k=+,\vare_\ell=-$, the graphs of $f_k$ $f_\ell$ do not cross on the left side of $t_{ij}$. (In this example, all three values of $t_{\ell k}$ are $>t_{ij}$.)}
\label{Fig: OrderThms34}
\end{center}
\end{figure} 

\begin{proof}
In Case (4), when $\ell<k$ and $\vare_k=+,\vare_\ell=-$, the arrow pattern is: $\rightarrow M_{\ell k}\leftarrow$ making $M_{\ell k}$ a proper submodule of $M_{ij}$. So, $t'>t_{ij}$ for all $t'=t_{\ell k}\in T_{\ell k}$ by Theorem \ref{thm: order thms, general}. In Case (3), $M_{k\ell}$ is a proper quotient module of $M_{ij}$. So, $t''<t_{ij}$ for all $t''=t_{k\ell}\in T_{k\ell}$.
\end{proof}

\subsection{Example: a nonlinear maximal green sequence}

The wire diagram in Figure \ref{Fig: Pappus} shows an example of a nonlinear maximal green sequence for $A_5^{-+-+}$.

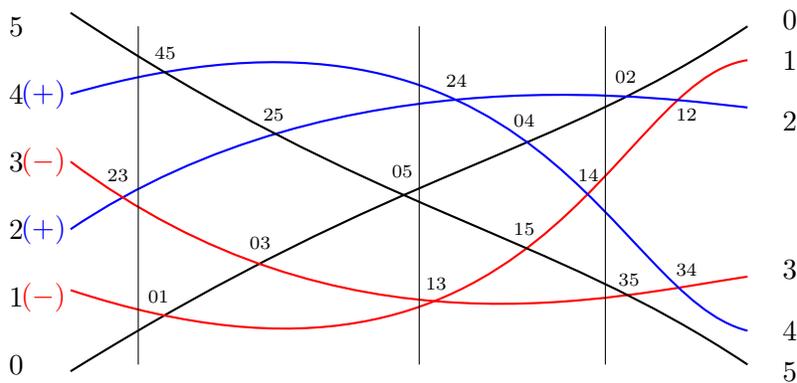
\begin{figure}[htbp]
\begin{center}
\begin{tikzpicture}[scale=.9] \foreach \y in {0,...,5} \draw (-5.8,\y) node{\y};
\begin{scope}[xshift=-5]
\draw (5.8, 5.1) node{0};
\draw (5.8, 4.5) node{1};
\draw (5.8, 3.6) node{2};
\draw (5.8, 1.4) node{3};
\draw (5.8, .5) node{4};
\draw (5.8, -.1) node{5};
\end{scope}
\draw[thick] (-5,-.1)..controls (0,3) and (2,3)..(5,5); 
\draw[thick, color=red](-5.4,1) node{$(-)$} (-5,1.1) .. controls (2,-1.2) and (3,4.2)..  (5,4.5); 
\draw[thick, color=blue](-5.4,2) node{$(+)$} (-5,2) .. controls (-1,4.7) and (4,3.9)..  (5,3.8); 
\draw[thick, color=red](-5.4,3) node{$(-)$} (-5,3) .. controls  (-1,0) and (3,1)..  (5,1.3); 
\draw[thick, color=blue](-5.4,4) node{$(+)$} (-5,4) .. controls (2,6) and (3,1)..  (5,.5); 
\draw[thick] (-5,5.2)..controls (-.5,2.2) and (2,2).. (5,0); 
\draw (-4,0)--(-4,5); 
\draw (0.15,0)--(.15,5); 
\draw (2.9,0)--(2.9,5); 
\draw (-3.7,1) node{\tiny$01$};
\draw (-3.6,4.6) node{\tiny$45$};
\draw (-4.3,2.8) node{\tiny$23$};
\draw (-2.2,1.8) node{\tiny$03$};
\draw (-2,3.7) node{\tiny$25$};
\draw (-.1,2.85) node{\tiny$05$};
\draw (0.7,4.2) node{\tiny$24$};
\draw (0.4,1.2) node{\tiny$13$};
\draw (1.7,2) node{\tiny$15$};
\draw (1.7,3.6) node{\tiny$04$};
\draw (3.25,1.25) node{\tiny$35$};
\draw (3.2,4.25) node{\tiny$02$};
\draw (2.65,2.8) node{\tiny$14$};
\draw (4.1,1.4) node{\tiny$34$};
\draw (4.1,3.7) node{\tiny$12$};
\end{tikzpicture}
\caption{Example of a nonlinear MGS on $A_5^{-+-+}:1\to 2\leftarrow 3\to 4\leftarrow 5$. Theorem \ref{thm: stable iff blue above and red below} shows that all modules $M_{ij}$ are stable. Pappus' Theorem implies that this arrangement of lines is nonlinear.
}
\label{Fig: Pappus}
\end{center}
\end{figure}

\begin{enumerate}
\item This is a wire diagram since each curve is the graph of a continuous function $\RR\to\RR$, all crossing are transverse and the wires $L_0,\cdots,L_5$ are in ascending order on the left and descending order on the right.
\item The diagram gives a maximal green sequence since all stable crossings are green (with lower numbered wire having higher slope).
\item All modules $M_{ij}$ are stable since $L_i,L_j$ cross only once and the intermediate blue, resp. red, wires pass above, resp. below, the crossing point. E.g., $M_{05}$ is stable since $L_2,L_4$ go over the crossing point $L_0\cap L_5$ and $L_1,L_3$ go under that point.
\item The diagram is nonlinear since, if it were linear, the wires $L_0$, $L_5$ could be moved to the left to make the crossings $t_{01},t_{45}$, resp. $t_{02},t_{35}$, line up with $t_{23}$, resp. $t_{14}$. But this would move the crossing $t_{05}$ to the left contradicting Pappus' Theorem.
\end{enumerate}

This proves the following.

\begin{thm}\label{thm: Pappus Thm example}
For the quiver $A_5^{-+-+}$, Figure \ref{Fig: Pappus} gives an example of a maximal green sequence of maximal length, 15, which is not given by any linear stability condition.\qed
\end{thm}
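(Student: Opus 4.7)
The plan is to split Theorem \ref{thm: Pappus Thm example} into two claims: that Figure \ref{Fig: Pappus} realizes a maximal green sequence of length 15 for $A_5^{-+-+}$, and that no linear stability condition realizes this particular MGS.

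For the first claim, I would apply Theorem \ref{thm: stable iff blue above and red below} to each of the $\binom{6}{2}=15$ pairs $(i,j)$ with $0\le i<j\le 5$. Under the sign function $\vare=(-,+,-,+)$, wires $L_2,L_4$ are blue (positive) and $L_1,L_3$ are red (negative), while $L_0,L_5$ are the boundary wires. For every pair $(i,j)$ I would verify by direct inspection of the figure that $L_i$ and $L_j$ meet exactly once and that every intermediate blue wire crosses above their meeting point while every intermediate red wire crosses below. Since the wires start in ascending order on the left and end in descending order on the right, the green direction condition of Definition \ref{def: green path} holds automatically at every crossing. As $A_5$ has exactly $15$ indecomposable modules, this already forces the resulting MGS to be of maximum possible length.

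For the nonlinearity claim, I would argue by contradiction. Assuming some linear stability condition realizes this MGS, the wires $L_0,\ldots,L_5$ become six straight lines in $\RR^2$. Horizontal translations of $L_0$ and $L_5$ alone affect only the crossings involving these two wires, so I may use the remaining affine freedom to push $L_0$ and $L_5$ to the left until the three crossings $t_{01},t_{23},t_{45}$ share one $x$-coordinate and the three crossings $t_{02},t_{14},t_{35}$ share another, matching the configuration drawn in Figure \ref{Fig: Pappus}. Pappus' theorem, applied to the hexagon of crossings whose six vertices lie alternately on these two vertical lines, then forces the crossing $t_{05}=L_0\cap L_5$ onto one of the two aligned vertical lines. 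But the ordering recorded in the figure places $t_{05}$ strictly between them, producing the contradiction.

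The main obstacle is making the Pappus step rigorous. One must identify precisely which hexagon of crossings is inscribed in the two vertical lines, check that the horizontal translations of $L_0$ and $L_5$ genuinely preserve the combinatorial type of the MGS while imposing the required triple alignments, and extract from Pappus' theorem the specific collinearity that forces $t_{05}$ to lie on one of the two vertical lines. The first half of the proof is a routine (if tedious) case check using Theorem \ref{thm: stable iff blue above and red below}; the projective rigidity encoded in Pappus is the genuine content of the nonlinearity conclusion.
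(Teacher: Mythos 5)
Your first half matches the paper's argument: the paper likewise verifies that all fifteen $M_{ij}$ are stable by checking, via Theorem \ref{thm: stable iff blue above and red below}, that each pair of wires crosses exactly once with every intermediate blue wire above and every intermediate red wire below the crossing, and notes that all crossings are green because the wires are in ascending order on the left and descending order on the right. Since $A_5$ has exactly $15$ indecomposables, the length is maximal. This part is fine.

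The nonlinearity step contains a genuine error. After the alignment, the hexagon inscribed alternately in the two vertical lines is $t_{01},t_{02},t_{23},t_{35},t_{45},t_{14}$, with sides running along $L_0,L_2,L_3,L_5,L_4,L_1$. Pappus' hexagon theorem then says that the three intersections of \emph{opposite sides}, namely $L_0\cap L_5$, $L_2\cap L_4$ and $L_1\cap L_3$, are collinear on a \emph{third} line (the Pappus line); it does not place $t_{05}$ on either of the two vertical lines. So your proposed contradiction (``the figure places $t_{05}$ strictly between them'') does not follow from Pappus. The paper's contradiction is of a different shape: the repositioning of $L_0$ and $L_5$ pushes the crossing $t_{05}$ further to the \emph{left}, while Pappus pins $L_0\cap L_5$ to the line through the unmoved points $L_1\cap L_3$ and $L_2\cap L_4$, and it is the incompatibility of these two facts with the recorded order $t_{05}<t_{13}<t_{24}$ that must be exploited. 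Separately, your alignment step is over-determined as stated: $t_{01}=t_{23}$ and $t_{02}=t_{14}$ are two independent conditions on $L_0$, whereas a horizontal translation of $L_0$ is a single parameter (likewise for $L_5$). One must instead replace $L_0$ by the line through the two target points $(t_{23},f_1(t_{23}))$ and $(t_{14},f_2(t_{14}))$, and similarly for $L_5$, and then argue that this move sends $t_{05}$ leftward. You flag both points as ``obstacles,'' but the resolutions you sketch (remaining affine freedom; $t_{05}$ forced onto a vertical line) are not the correct ones, so the nonlinearity half is not yet a proof.
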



%
%

\setcounter{section}4

\section{Critical line and upper bound}\label{sec5}\label{critical line}

In this section we introduce the critical line and use the order of the points on the critical line to determine which wire crossings can be stable. We then use this to find the upper bound on the length of a maximal green sequence by assuming that all of these crossings are indeed stable. In Section \ref{sec6} we will show that this upper bound is actually attained.

Throughout this section we assume that $\gamma$ is a generic green path for a quiver of type $\widetilde A_{a,b}^\vare$ with associated functions $f_i$ and wires $L_i$ which are the graphs of these functions. Note that, since $\gamma$ is generic, no three wires meet at any point. So, $M_{ij}$ is $(\gamma)$-stable if and only if it is $(\gamma)$-semistable. We will also say that the intersection $t_{ij}$ is stable in that case.

\subsection{The critical line}\label{ss critical}
	Due to the periodicity of the wire diagram ($f_{i+n}(t)-f_i(t)=f_{j+n}(t)-f_j(t)$ for all $i,j$), we can observe that $t^0_{0,n} = t^0_{1, n+1} = t^0_{2, n+2} = \dots$, or more generally that $t^0_{i, i+n} = t^0_{j, j+n}$ for all $i, j$. This leads to the following definition.
	
	\begin{defn} Let $c^0=t^0_{0,n}$. The {\bf critical line} is the line $t = c^0$, which is the left-most vertical line upon which all the intersections $L_i\cap L_{i+n}$ fall. We say $i \triangleleft j$ when $f_i (c^0) < f_j (c^0)$ and $i\trianglelefteq j$ when $f_i (c^0) \le f_j (c^0)$.
	\end{defn}
	
	The critical line is so called because we can use it to determine several conditions for stability of intersections, to be described shortly. Note that the signs of the lines in the wire diagram determine a sequence on the critical line, such as the one in Figure \ref{fig: critical line}.
	
	\begin{figure}[htbp]
		\begin{center}
			\begin{tabular}{c c}
				$j$ & $\cdot$ \\
				$\ell$ & $-$ \\
				$k$ & $+$ \\
				$i$ & $+$
			\end{tabular}
		\end{center}	
		\caption{Four points on the critical line with $\vare_i=\vare_k=+$ and $\vare_\ell=-$. The figure indicates $i\triangleleft k\triangleleft \ell \triangleleft j$ but does not preclude the presence of other points on the critical line above, below or between these points.}\label{fig: critical line}
	\end{figure}

\subsection{Lemmas}
	For each $i, j$, we will determine all $p$ so that $M_{i, j+pn}$ can be stable. More precisely we show that $M_{i,j+pn}$ is {\bf unstable} (not semistable) for all but two values of $p$. To show that $M_{ij}$ is unstable, we find $t_{ij}\in T_{ij}$ so that $\gamma(t_{ij})\notin D(M_{ij})$. In that case we say that the intersection of the wires $L_i$ and $L_{j}$ at $t_{ij}$ is {unstable}. This will depend on the sign of $i, j$ and/or the position of other curves $L_k, L_{\ell}$ on the critical line.

\begin{lem}\label{lem for 2,3 point lemmas}
Suppose $i < k < j$ and $t_0\in\RR$ so that $f_i(t_0)\neq f_j(t_0)$. Suppose either
\begin{enumerate}
\item $\varepsilon_k = +$ and $f_k(t_0) \le \min( f_i(t_0), f_j(t_0))$ or
\item $\varepsilon_k = -$ and $f_k(t_0) \ge \max( f_i(t_0), f_j(t_0))$.
\end{enumerate}
Then $M_{ij}$ is unstable. 
\end{lem}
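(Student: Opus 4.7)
The plan is to derive a contradiction using the order theorem stated just above the lemma. Suppose, toward a contradiction, that $M_{ij}$ is $\gamma$-semistable; by the genericity hypothesis recalled at the start of the section this is the same as being $\gamma$-stable, so some $t_{\ast}\in T_{ij}$ makes $(M_{ij},t_{\ast})$ a stable pair, and by Lemma~\ref{lem: tij stable} this $t_{\ast}$ is the unique element of $T_{ij}$. Since $f_i(t_0)\neq f_j(t_0)$, we have $t_0\notin T_{ij}$, in particular $t_0\neq t_{\ast}$; I will split on whether $t_0<t_{\ast}$ or $t_0>t_{\ast}$ and in each case contradict one of the two inequalities in the hypothesis on $f_k(t_0)$.

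In case (1), with $\vare_k=+$, the order theorem applied to the stable pair gives $t^1_{kj}<t_{\ast}<t^0_{ik}$. If $t_0<t_{\ast}$, then $t_0<t^0_{ik}$, so $t_0$ lies strictly to the left of every zero of $f_i-f_k$; combined with the asymptotic order $f_i<f_k$ at $-\infty$ (which follows from $i<k$ and $b_p>0$ for all $p$), this forces $f_k(t_0)>f_i(t_0)$, contradicting the assumed $f_k(t_0)\le f_i(t_0)$. If instead $t_0>t_{\ast}$, then $t_0>t^1_{kj}$, so $t_0$ lies strictly to the right of every zero of $f_k-f_j$; combined with $f_k>f_j$ at $+\infty$ (which follows from $k<j$), this forces $f_k(t_0)>f_j(t_0)$, contradicting the assumed $f_k(t_0)\le f_j(t_0)$.

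Case (2), with $\vare_k=-$, is exactly dual. The order theorem now supplies $t^1_{ik}<t_{\ast}<t^0_{kj}$, and the two sub-cases $t_0<t_{\ast}$ and $t_0>t_{\ast}$ yield $f_k(t_0)<f_j(t_0)$ and $f_k(t_0)<f_i(t_0)$ respectively, each contradicting one half of the hypothesis $f_k(t_0)\ge\max(f_i(t_0),f_j(t_0))$. Once the order theorem is available the argument is mechanical; the only thing that needs to be checked carefully is that the asymptotic sign of $f_k-f_i$ at $-\infty$ (resp.\ of $f_k-f_j$ at $+\infty$) is preserved all the way up to the first zero (resp.\ down from the last zero), which is immediate from the definition of $t^0$ and $t^1$ as the smallest and largest elements of $T$, so there is no real obstacle beyond keeping the asymptotic orderings straight.
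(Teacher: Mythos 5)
Your proof is correct and takes essentially the same route as the paper's: the paper argues directly that $k$ is a \emph{witness} to instability (Definition \ref{def: witness}) by showing $t^0_{ik}\le t_0<t_{ij}$ or $t^1_{kj}\ge t_0>t_{ij}$ via the same asymptotic ordering of the wires, which is just the contrapositive of your argument, and your case split on $t_0<t_*$ versus $t_0>t_*$ coincides (given uniqueness of $t_{ij}$) with the paper's split on $f_i(t_0)<f_j(t_0)$ versus $f_j(t_0)<f_i(t_0)$.
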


\begin{proof} In both cases there are two subcases: (a) $i\triangleleft j$ and (b) $j\triangleleft i$. We may assume $t_{ij}$ is unique since, otherwise, $M_{ij}$ is not stable. In Subcase (a), $t_{ij}>t_0$ and, in Subcase (b), $t_{ij}<t_0$. In all four subcases, $k$ is a witness to the instability of $M_{ij}$ (Definition \ref{def: witness}). For example, in Case (1a), $f_i(t)=f_k(t)$ for some $t\le t_0$. So, $t^0_{ik}\le t_0<t_{ij}$ making $k$ a witness. In Case (1b), $f_j(t),f_k(t)$ switch order at some $t\ge t_0$. So  $t^1_{kj}\ge t_0>t_{ij}$ making $k$ a witness. Cases (2a), (2b) are similar. Thus $M_{ij}$ is not stable.
\end{proof}
	
	\begin{lem}[Three-Point Lemma]\label{3pt lem}
		Suppose there are three points on the critical line in one of the following two arrangements:
		\begin{center}
			\begin{tabular}{c c ccc c c}
				$j$ & $\cdot$ &&&& $k$ & $-$\\
				$i$ & $\cdot$ &&\text{or}&& $j$ & $\cdot$\\
				$k$ & $+$ &&&& $i$ & $\cdot$
			\end{tabular}
		\end{center}
I.e., $k\triangleleft i\triangleleft j$ with $\vare_k=+$ in the first case and $i\triangleleft j\triangleleft k$ with $\vare_k=-$ in the second case. 
Then there is at most one stable intersection of the form $t_{i,j'}$ where $k<i<k+n$ and $j= j+pn$ for some $p$, namely the one given by $k<i,j'<k+n$.
	\end{lem}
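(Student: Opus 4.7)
The plan is to apply Lemma \ref{lem for 2,3 point lemmas} at $t_0 = c^0$, using a periodic translate $k' = k + qn$ of $k$ as the destabilizing witness. The key periodicity input is that $f_{i+n}(c^0) = f_i(c^0)$ for every $i$: by definition of $c^0$, the wires $L_i$ and $L_{i+n}$ meet there, and the periodicity $f_{i+n}-f_i = f_{j+n}-f_j$ then propagates this equality. Consequently, each of the three hypothesized points $k, i, j$ on the critical line really stands for an entire $n$-periodic orbit of equal values, and the strict inequality $f_k(c^0) < f_j(c^0)$ in Case 1 (or $>$ in Case 2) forces $j \not\equiv k \pmod{n}$.

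For Case 1 ($k \triangleleft i \triangleleft j$ with $\vare_k = +$), I will let $j_0$ be the unique representative of the congruence class $\{j + pn\}$ in $(k, k+n]$; the previous paragraph gives $j_0 \neq k+n$, so $k < j_0 < k+n$. I then partition the set of $j' = j+pn$ with $i < j'$ into two groups: (a) those with $j' > k+n$, and (b) the (at most one) $j' < k+n$, which can only be $j_0$ and additionally requires $i < j_0$. For group (a), the witness $k' = k+n$ lies strictly between $i$ and $j'$, has sign $\vare_{k+n} = \vare_k = +$, and satisfies
\[
  f_{k+n}(c^0) = f_k(c^0) \le \min(f_i(c^0),\, f_{j'}(c^0))
\]
by the assumed arrangement on the critical line; Lemma \ref{lem for 2,3 point lemmas}(1) then rules out the semistability of $M_{i,j'}$. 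The only surviving candidate is therefore $j' = j_0 \in (i, k+n)$, exactly the one described in the lemma.

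Case 2 ($i \triangleleft j \triangleleft k$ with $\vare_k = -$) is structurally identical: with $f_k(c^0) > f_j(c^0) > f_i(c^0)$, the same witness $k' = k+n$ satisfies $f_{k+n}(c^0) \ge \max(f_i(c^0),\, f_{j'}(c^0))$ for every $j' > k+n$, and part (2) of Lemma \ref{lem for 2,3 point lemmas} applies verbatim. Again the unique surviving candidate is $j_0 \in (i, k+n)$.

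The only delicate step is the boundary value $j' = k+n$, where the chosen witness $k' = k+n$ would coincide with $j'$ and hence fail; this is precisely the case excluded by the observation that $j' = k+n$ would force $j \equiv k \pmod{n}$ and collapse the strict separation of $f_k(c^0)$ and $f_j(c^0)$ on the critical line. Beyond this boundary check the argument is essentially automatic: periodicity is doing all the real work by manufacturing an unambiguous witness $k+n$ just past $i$ on the integer axis.
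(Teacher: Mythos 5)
Your approach is the paper's approach --- apply Lemma \ref{lem for 2,3 point lemmas} at $t_0=c^0$ with a periodic translate of $k$ as witness --- but your case division only covers half of the translates of $j$, so as written the proof has a gap. You partition ``the set of $j'=j+pn$ with $i<j'$'' and use the witness $k+n$ for every $j'>k+n$. But the intersections $t_{i,j'}$ in the statement also include those with $j'<i$ (recall $T_{ij}=T_{ji}$, and the relevant module is then $M_{j',i}$); these are exactly the translates $j'=j_0-pn$ with $p\ge 1$, all of which satisfy $j'<k<i$. Your chosen witness $k+n$ does not lie between $j'$ and $i$ for these, so Lemma \ref{lem for 2,3 point lemmas} cannot be invoked with it, and nothing in your argument rules out stability of $M_{j',i}$. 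These cases matter: in the application (Claim (b) in the proof of Theorem \ref{thm M1a}) one must know that each residue pair contributes at most one stable module, which requires excluding translates in \emph{both} directions. The fix is exactly the paper's other half: for $j'<k<i$ take the witness $k$ itself, which lies strictly between $j'$ and $i$, has $\vare_k=+$, and satisfies $f_k(c^0)\le\min\bigl(f_{j'}(c^0),f_i(c^0)\bigr)$ since $k\triangleleft i$ and $k\triangleleft j$ with $f_{j'}(c^0)=f_j(c^0)$; then Lemma \ref{lem for 2,3 point lemmas}(1) kills $M_{j',i}$. The same adjustment is needed in your Case 2.

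A second, smaller issue: by restricting to $i<j'$ you implicitly assume the surviving candidate $j_0$ satisfies $i<j_0$, but the lemma only asserts $k<j_0<k+n$; when $j_0<i$ the allowed intersection is $t_{j_0,i}$ with $j_0$ on the other side of $i$, and your group (b) is then empty while the genuine candidate never appears in your enumeration. Your boundary discussion ($j'\ne k+n$ because $j\not\equiv k \pmod n$) and the use of periodicity of $f$ along the critical line are correct and match the paper.
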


\begin{proof}
Case 1: If $j'<k$ then $j'<k<i$ making $M_{j'i}$ unstable by Lemma \ref{lem for 2,3 point lemmas} (1). Similarly, if $k+n<j'$, then $i<k+n<j'$ making $M_{ij'}$ unstable. So, there is only one possible value of $j'=j+pn$ for which $M_{ij'}$ could be stable, namely the one with $k<j'<k+n$.

Case 2 is similar using Lemma \ref{lem for 2,3 point lemmas} (2).
\end{proof}

	\begin{lem}[Two-Point Lemma]\label{2pt lem}
		Suppose there are two points on the critical line in one of the following two arrangements:
		\begin{center}
			\begin{tabular}{c c ccc c c}
				$j$ & $\cdot$ &&&& $j$ & $-$\\
				$i$ & $+$ &&\text{or}&& $i$ & $\cdot$\\
			\end{tabular}
		\end{center}
I.e., $i\triangleleft j$ and either $\vare_i=+$ or $\vare_j=-$. Then there are at most two stable intersections of the form $t_{i, j'}$ with $j'=j+pn$ given by taking the two values of $j'$ so that $i-n<j'<i+n$.
	\end{lem}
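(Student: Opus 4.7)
The plan is to show that if $j' = j + pn$ lies outside the open interval $(i - n,\, i + n)$, then the intersection $t_{i,j'}$ is unstable. Since the residue class $j + n\mathbb{Z}$ meets this interval of length $2n$ in at most two points, this yields the ``at most two'' bound. The boundary cases $j' = i \pm n$ cannot occur: they would force $j \equiv i \pmod n$, hence $f_j(c^0) = f_i(c^0)$, contradicting $i \triangleleft j$. In every case I apply Lemma \ref{lem for 2,3 point lemmas} at $t_0 = c^0$, exploiting the critical-line periodicity $f_{m+n}(c^0) = f_m(c^0)$ and $\varepsilon_{m+n} = \varepsilon_m$ to produce a shifted copy of $i$ or $j$ as the witness $k$.

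In Case 1 ($\varepsilon_i = +$), suppose first that $j' > i + n$ and take $k := i + n$. Then $i < k < j'$ as integers, $\varepsilon_k = +$, and at $c^0$ we have
\[
	f_k(c^0) = f_i(c^0) < f_j(c^0) = f_{j'}(c^0),
\]
so $f_k(c^0) = \min(f_i(c^0),\, f_{j'}(c^0))$ while $f_i(c^0) \ne f_{j'}(c^0)$. Lemma \ref{lem for 2,3 point lemmas}(1) then shows $M_{i,j'}$ is unstable. The mirror case $j' < i - n$ is handled symmetrically with $k := i - n$ and the module $M_{j',i}$.

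In Case 2 ($\varepsilon_j = -$), I first normalize so that $j \in (i,\, i + n)$; the alternative representative $j \in (i - n,\, i)$ is handled symmetrically by reflecting through $i$. If $j' = j + pn$ with $p \ge 1$, then $j' \ge j + n > i + n$, and $k := j$ satisfies $i < k < j'$, $\varepsilon_k = -$, and
\[
	f_k(c^0) = f_j(c^0) = f_{j'}(c^0) = \max(f_i(c^0),\, f_{j'}(c^0)),
\]
so Lemma \ref{lem for 2,3 point lemmas}(2) gives instability of $M_{i,j'}$. If $p \le -2$, then $j' \le j - 2n < i - n$, and $k := j - n$ lies strictly between $j'$ and $i$: the normalization $j < i + n$ gives $k < i$, and $j' \le j - 2n < j - n = k$ gives $j' < k$. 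The same computation at $c^0$ again shows $M_{j',i}$ is unstable.

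The only real obstacle is bookkeeping: the witness $k$ must lie strictly between $i$ and $j'$ as integers, not merely on the critical line. This is automatic in Case 1 since $k = i \pm n$ is fixed once $j'$ lands on the correct side of $i$, but in Case 2 it requires the preliminary normalization of $j$ to $(i,\, i + n)$, and this is precisely what dictates the width-$2n$ stable range $(i - n,\, i + n)$ appearing in the conclusion.
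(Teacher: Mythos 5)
Your proof is correct and follows essentially the same route as the paper: apply Lemma \ref{lem for 2,3 point lemmas} at $t_0=c^0$ with a periodic translate of $i$ (when $\vare_i=+$) or of $j$ (when $\vare_j=-$) serving as the witness $k$, which rules out every $j'$ outside $(i-n,i+n)$. The paper writes out only the first case and declares the second ``similar''; your Case 2, with witnesses $k=j$ and $k=j-n$, is exactly that similar argument made explicit.
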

	
\begin{proof} Take the first case.  If $j'<i-n$ then apply Lemma \ref{lem for 2,3 point lemmas} with $k=i-n$ to see that $M_{ij'}$ is unstable. If $j'>i+n$ we let $k=i+n$. Then, again, $M_{ij'}$ is unstable by Lemma \ref{lem for 2,3 point lemmas}. So, the only possible stable $M_{ij'}$ are the two with $i-n<j'<i+n$. 

The second case is similar.
\end{proof}

	\begin{lem}[Four-Point Lemma]\label{4pt lem}
		Suppose there are four points on the critical line in the following arrangement:
		\begin{center}
			\begin{tabular}{c c}
				$j$ & $\cdot$ \\
				$\ell$ & $-$ \\
				$k$ & $+$ \\
				$i$ & $\cdot$
			\end{tabular}
		\end{center}
I.e, $i\triangleleft k\triangleleft \ell\triangleleft j$,
$\vare_k=+$ and $\vare_\ell=-$. Then there are at most two stable intersections of the form $t_{i, j+pn}$. Choosing $i,j,k,\ell$ so that $k<j,\ell<k+n$ and $\ell<i<\ell+n$, the possible stable intersections are $t_{ij}$ and $t_{i, j+n}$.
	\end{lem}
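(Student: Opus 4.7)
The plan is to show that, for every $p \notin \{0, 1\}$, the crossing $t_{i, j+pn}$ cannot correspond to a $\gamma$-stable module. The idea is to apply Theorem~\ref{thm: 4lines} with carefully chosen $n$-shifts of $k$ and $\ell$ as auxiliary witnesses, and then reach a contradiction by locating the relevant intersection times on opposite sides of the critical line $c^0$.

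Before proceeding case by case, I would establish three inequalities pinning down positions relative to $c^0$. They all follow from the critical-line ordering $i \triangleleft k \triangleleft \ell \triangleleft j$, the identity $f_{A+n}(c^0) = f_A(c^0)$, and the general fact that for $A < B$ as integers one has $f_A < f_B$ at $t = -\infty$ and $f_A > f_B$ at $t = +\infty$ (a consequence of $\gamma$ being a green path, since $f_A - f_B = \gamma\cdot\beta_{AB}$ inherits the signs of the coordinates of $\gamma(t)$). \emph{(i)} $t^1_{k,\ell} > c^0$: $f_k < f_\ell$ holds at $t = -\infty$ and at $c^0$ but reverses at $+\infty$, so the last swap is to the right of $c^0$. \emph{(ii)} $t^0_{\ell, k+n} < c^0$: $f_\ell < f_{k+n}$ at $t = -\infty$, while $f_\ell(c^0) > f_k(c^0) = f_{k+n}(c^0)$, so the first swap is to the left of $c^0$. \emph{(iii)} If the unique crossing $t_{i, j+pn}$ exists, it lies to the right of $c^0$ when $i < j+pn$ and to the left when $j+pn < i$, by the same comparison of heights at $c^0$ with behavior at $-\infty$.

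For $p \ge 2$ we have $i < j+pn$, so the module is $M_{i, j+pn}$. Case~(4) of Theorem~\ref{thm: 4lines} applies with $\ell^* = \ell + n$ and $k^* = k + 2n$: the chain $i < \ell+n < k+2n < j+pn$ holds from the standing hypotheses $i < \ell + n$, $\ell < k + n$, $k < j$ together with $p \ge 2$, and the signs are $\vare_{\ell^*} = -$, $\vare_{k^*} = +$ by $n$-periodicity of $\vare$. Stability would therefore force
\[
  t_{i,\, j+pn} \;<\; t^0_{\ell+n,\, k+2n} \;=\; t^0_{\ell,\, k+n},
\]
where the equality uses $n$-periodicity of wire differences. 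Combined with the calibration, this gives $c^0 < t_{i, j+pn} < t^0_{\ell, k+n} < c^0$, a contradiction. For $p \le -1$ the hypothesis $j < k+n$ yields $j+pn \le j-n < k < i$, so the module is $M_{j+pn, i}$, and Case~(3) of Theorem~\ref{thm: 4lines} then applies with $(k^*, \ell^*) = (k, \ell)$ along the chain $j + pn < k < \ell < i$, yielding $t^1_{k, \ell} < t_{j+pn, i}$; combined with the calibration, this forces $c^0 < t^1_{k,\ell} < t_{j+pn, i} < c^0$, again a contradiction. Hence only $p = 0$ and $p = 1$ can correspond to stable crossings, namely $t_{ij}$ and $t_{i, j+n}$.

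The main obstacle is the calibration step, which mixes pointwise information (wire heights at the single time $c^0$) with global asymptotic information (the order of wires at $t = \pm\infty$), and must work uniformly whether $i$ lies in $(\ell, k+n)$ or in $[k+n, \ell+n)$. Once the three inequalities (i)--(iii) are in hand, Theorem~\ref{thm: 4lines} together with the periodicity $t_{A+n, B+n} = t_{A,B}$ delivers each contradiction in essentially one line.
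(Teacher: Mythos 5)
Your proposal is correct and follows essentially the same route as the paper: both exclude all but two values of $p$ by applying Theorem \ref{thm: 4lines} to the shifted witnesses $(k,\ell)$ and $(\ell+n,k+2n)$ and deriving a contradiction from the position of the relevant crossings relative to the critical line $t=c^0$. Your write-up merely makes explicit the calibration inequalities ($t^1_{k\ell}>c^0$, $t^0_{\ell,k+n}<c^0$, and the side of $c^0$ on which a unique crossing $t_{i,j+pn}$ must lie) that the paper asserts without elaboration.
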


\begin{proof}
This follows from Theorem \ref{thm: 4lines} using the given ordering:
\[
	k<\ell<i<\ell+n<k+2n.
\]
For any $j'=j+pn<k$ we have $t_{i,j'}<c^0<t_{k\ell}^1$. So, $M_{i,j'}$ is unstable by Theorem \ref{thm: 4lines}. Similarly, if $j'>k+2n$ we have $t^0_{\ell+n,k+2n}<c^0<t_{i,j'}$ making $M_{i,j'}$ unstable. Thus the only possible values of $j'=j+pn$ making stable $M_{i,j'}$ are the two with $k<j'<k+2n$.
\end{proof}

\begin{lem}[Finiteness Lemma]\label{finiteness lemma}
Suppose that, on the critical line $t=c^0$, all positive points are above all negative points. I.e., $k\triangleright \ell$ whenever $\vare_k=+$ and $\vare_\ell=-$. Then there are an infinite number of stable modules.
\end{lem}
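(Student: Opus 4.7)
The plan is to exhibit an explicit infinite family of $\gamma$-stable modules. Using the hypothesis that positive wires lie strictly above negative wires on the critical line, I would pick $i_0 \in [1, n]$ with $\vare_{i_0} = +$ that minimizes $f_{i_0}(c^0)$ among positive indices, and pick $j_0 \in (i_0, i_0+n)$ with $\vare_{j_0} = -$ that maximizes $f_{j_0}(c^0)$ among negative indices in that interval (such $j_0$ exists since $b \geq 1$). The candidate infinite family is $\{M_{i_0,\, j_0+pn} : p \geq p_0\}$, each exceptional since $\vare_{i_0} \neq \vare_{j_0+pn}$.

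The first step is to locate the crossing $t^* := t_{i_0,\, j_0+pn}$. Using $n$-periodicity of the coefficients $a_k, b_k$, the equation $f_{i_0}(t) = f_{j_0+pn}(t)$ rewrites as
\[
p\,\bigl(A(t) - t\,B(t)\bigr) \;=\; f_{i_0}(t) - f_{j_0}(t),
\]
where $A(t) = a_1(t) + \cdots + a_n(t)$ and $B(t) = b_1(t) + \cdots + b_n(t)$. At $t = c^0$ the left side vanishes by the definition of the critical line, while the right side is strictly positive by hypothesis. Monotonicity of both sides near $c^0$ (for generic $\gamma$) then forces a unique solution $t^* < c^0$, with $t^* \to c^0$ from below as $p \to \infty$. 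Set $\eta := A(t^*) - t^*\,B(t^*) > 0$, so that $p\eta = f_{i_0}(t^*) - f_{j_0}(t^*)$.

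The second step is to verify the stability conditions of Theorem \ref{thm: stable iff blue above and red below} at $t^*$. Any intermediate wire $L_m$ with $i_0 < m < j_0+pn$ can be written $m = i_0 + qn + s$ for $0 \leq s < n$ and $0 \leq q \leq p$, and by periodicity
\[
f_m(t^*) - f_{i_0}(t^*) \;=\; F_s(t^*) + q\,\eta, \qquad F_s(t) := f_{i_0+s}(t) - f_{i_0}(t).
\]
When $\vare_m = +$, minimality of $i_0$ gives $F_s(c^0) \geq 0$ (strict unless $s = 0$), and $q\eta \geq 0$, so the sum is strictly positive as required. When $\vare_m = -$, the borderline subcase $[m] = [j_0]$ (equivalently $s = j_0 - i_0$) forces $q \leq p-1$ and collapses to the exact identity $f_m(t^*) - f_{i_0}(t^*) = (q-p)\eta$, which is strictly negative. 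In the remaining subcases, maximality of $j_0$ gives a strict inequality $F_s(c^0) < f_{j_0}(c^0) - f_{i_0}(c^0)$, which combined with $q\eta \leq p\eta \to f_{i_0}(c^0) - f_{j_0}(c^0)$ yields a uniform strict bound $F_s(t^*) + q\eta < 0$ for $p$ large.

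The main obstacle is the borderline case $[m] = [j_0]$, where a crude first-order perturbation from $c^0$ degenerates: one has $F_s(c^0) = f_{j_0}(c^0) - f_{i_0}(c^0) = -\lim p\eta$, with equality rather than strict inequality. This degeneracy is resolved by the exact identity $p\eta = -F_{s_{j_0}}(t^*)$ extracted from the crossing equation, producing the telescoping cancellation $(q-p)\eta < 0$. Since only finitely many residue classes mod $n$ appear, the other (non-borderline) subcases contribute uniform strict inequalities dominating the perturbation errors, so all conditions of Theorem \ref{thm: stable iff blue above and red below} hold for $p \geq p_0$. Hence $M_{i_0,\, j_0+pn}$ is $\gamma$-stable for all such $p$, producing infinitely many stable modules.
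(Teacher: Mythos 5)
Your proposal is correct, but it takes a genuinely different route from the paper. The paper's proof is a short extremal argument: among all pairs $(i,j)$ with $-pn\le i<j\le pn$, $\vare_i=+$, $\vare_j=-$ (which must cross before $c^0$ by the hypothesis), it picks the crossing $t_{ij}<c^0$ that is \emph{latest}, breaking ties by minimizing $j-i$, and shows in two lines that any intermediate wire violating Theorem \ref{thm: stable iff blue above and red below} would produce a later (or equal but shorter) such crossing, a contradiction; enlarging the window from $p$ to $p+1$ then forces a strictly later stable crossing, giving infinitely many. You instead fix the extremal pair $(i_0,j_0)$ on the critical line and verify directly that the explicit family $M_{i_0,\,j_0+pn}$ is stable for $p$ large, via the periodic decomposition $f_m-f_{i_0}=F_s+q\,(A-tB)$ and the telescoping identity $p\eta=-F_{j_0-i_0}(t^*)$ that handles the borderline residue class $[m]=[j_0]$. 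Your computation is sound: localization of the crossings near $c^0$, the strictness of $F_s(c^0)>0$ (resp.\ $F_s(c^0)<F_{j_0-i_0}(c^0)$) supplied by genericity, and the uniform bounds over the finitely many residue classes all check out. In effect you have produced the nonlinear analogue of the paper's own proof of Corollary \ref{cor: cannot have all pluses above all minuses}, which in the linear case exhibits exactly the family $M_{k,\ell+sn}$ for the extremal pair of dual vertices. What you gain is an explicit infinite family of stable modules; what you pay is the asymptotic bookkeeping, which the paper's greedy choice avoids entirely. One small point of care: your uniqueness claim for $t^*$ only rules out extra crossings \emph{below and near} $c^0$, not possible crossings to the right of $c^0$; this is harmless because the ``if'' direction of Theorem \ref{thm: stable iff blue above and red below} (via Theorem \ref{thm: linear approximation}) only requires the wire conditions at one point of $T_{i_0,j_0+pn}$, and uniqueness then follows a posteriori from Lemma \ref{lem: tij stable}, but you should say so rather than lean on ``monotonicity near $c^0$.''
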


\begin{proof}
For any positive integer $p$ we consider the (finite) set of all pairs $(i,j)$ with $-pn\le i<j\le pn$ so that $\vare_i=+$, $\vare_j=-$. Since $f_i(c^0)>f_j(c^0)$ the curves $L_i,L_j$ must cross at some point $t_{ij}<c^0$. Choose $(i,j)$ so that $t_{ij}<c^0$ is maximal and, if there is a tie, choose whichever has smaller value of $j-i$. Then we claim that this intersection is stable. To see this consider any $i<k<j$ with $\vare_k=+$. If $f_k(t_{ij})\le f_j(t_{ij})$ then $L_k,L_j$ cross at some point $t_{ij}\le t_{kj}<c^0$. Since $j-k<j-i$ this contradicts the choice of $(i,j)$. Thus $f_k(t_{ij})>f_j(t_{ij})$. Similarly, $f_\ell(t_{ij})<f_i(t_{ij})=f_j(t_{ij})$ for all $i<\ell<j$ with $\vare_\ell=-$. So, $M_{ij}$ is stable by Theorem \ref{thm: stable iff blue above and red below}.

Now increase $p$ by 1. Then $-(p+1)n\le i-n<j+n\le (p+1)n$. Since $t_{ij}<c^0$ we have: 
\[
	f_{i-n}(t_{ij})<f_i(t_{ij})=f_j(t_{ij})<f_{j+n}(t_{ij}).
\]
So, the wires $L_{i-n}$ and $L_{j+n}$ cross at some point $t_{ij}<t_{i-n,j+n}<c^0$. So, there exists another stable $M_{i',j'}$ with $t_{ij}<t_{i'j'}<c^0$. Proceeding in this way, we get an infinite sequence of nonisomorphic stable modules.
\end{proof}

\begin{prop}\label{prop: regular stable modules}
Let $M_{ij}$ be a stable module in a maximal green sequence for the quiver $\widetilde A_{a,b}^\vare$. Suppose $\vare_i=\vare_j$. Then $j-i<n$.
\end{prop}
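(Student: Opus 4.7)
The plan is to argue by contradiction: assume $M_{ij}$ is stable in an MGS for $\widetilde A_{a,b}^\vare$ with $\vare_i=\vare_j$ and $j-i\ge n$, splitting into the subcases $j-i>n$ and $j-i=n$. For the first subcase, with $\vare_i=\vare_j=+$, I would exploit the $n$-periodicity of the sign function $\vare$. The subrepresentation criterion stated in Section \ref{sec1} extends verbatim to string modules of $\widetilde A_{a,b}^\vare$, since cutting is a local operation depending only on the sign at the cut vertex. Hence both $M_{i,j-n}$ and $M_{i,i+n}$ are proper subrepresentations of $M_{ij}$: the cut at $j-n$ is legal because $\vare_{j-n}=\vare_j=+$, and the cut at $i+n$ is legal because $\vare_{i+n}=\vare_i=+$. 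The second of these exhibits $M_{i+n,j}$ as a proper quotient of $M_{ij}$. Since $e_{[k]}$ depends only on $k\bmod n$, the modules $M_{i,j-n}$ and $M_{i+n,j}$ share the dimension vector $\sum_{k=i+1}^{j-n}e_{[k]}$ and therefore cut out the same hyperplane $H\subset\RR^n$. Theorem \ref{thm: order thms, general} then forces every $t\in\gamma^{-1}(H)$ to satisfy both $t>t_{ij}$ (from the submodule) and $t<t_{ij}$ (from the quotient), while $\gamma^{-1}(H)$ is nonempty because $\gamma$ passes from all-negative to all-positive coordinates: contradiction. The dual case $\vare_i=\vare_j=-$ is symmetric, with $M_{i+n,j}$ the proper submodule (using $\vare_{i+n}=-$) and $M_{i,j-n}$ the proper quotient (using $\vare_{j-n}=-$), and the same coincidence of dimension vectors.

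For the subcase $j-i=n$ this trick degenerates because $M_{i,j-n}=0$, so I would instead invoke the Finiteness Lemma \ref{finiteness lemma}. Stability forces $t_{ij}$ to be unique by Lemma \ref{lem: tij stable}, and the universal identity $f_i(t)-f_{i+n}(t)=(b_1+\cdots+b_n)t-(a_1+\cdots+a_n)$ (which is independent of $i$) shows that $T_{i,i+n}$ is the same set for every $i$; together these yield $t_{ij}=c^0$. Applying Theorem \ref{thm: stable iff blue above and red below} at $c^0$ requires $f_m(c^0)>f_i(c^0)$ for every positive $m\in(i,i+n)$ and $f_m(c^0)<f_i(c^0)$ for every negative such $m$. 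Since the residues $\{[i+1],\ldots,[i+n-1]\}$ together with $[i]$ exhaust all of $\ZZ/n\ZZ$, every positive position on the critical line lies at height $\ge f_i(c^0)$ and every negative position at height $\le f_i(c^0)$, with equality occurring only at the residue $[i]$, which carries sign $\vare_i$. Either way this yields a strict separation: all positive points lie above all negative points on the critical line, so the Finiteness Lemma produces infinitely many stable modules, contradicting the finiteness of the MGS.

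The main obstacle is verifying the periodic analogue of the subrepresentation criterion from Section \ref{sec1}; fortunately the cutting argument is local and inspects only the sign at the cut vertex, so it carries over without incident. Once this is in place, the rest of the proof is a direct combination of Theorem \ref{thm: order thms, general} with the Finiteness Lemma and the $n$-periodicity of $\vare$ and of the unit vectors $e_{[k]}$.
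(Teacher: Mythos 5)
Your proof is correct, but it takes a genuinely different route from the paper's for the main case. The paper splits according to whether $i\equiv j\pmod n$: for congruent indices it argues exactly as you do in your $j-i=n$ subcase (uniqueness of $t_{ij}$ forces $t_{ij}=c^0$, Theorem \ref{thm: stable iff blue above and red below} then puts every positive point above every negative point on the critical line, contradicting the Finiteness Lemma), while for non-congruent indices it simply quotes the 2-point Lemma \ref{2pt lem}, which rests on the critical-line/witness machinery of Lemma \ref{lem for 2,3 point lemmas}. Your treatment of $j-i>n$ instead avoids the critical line altogether: you exhibit a proper nonzero submodule and a proper nonzero quotient of $M_{ij}$ (namely $M_{i,j-n}$ and $M_{i+n,j}$, or their mirror images in the negative case) whose dimension vectors agree by $n$-periodicity, so that Theorem \ref{thm: order thms, general} forces any point of the nonempty set $\gamma^{-1}(H)$ to lie both after and before $t_{ij}$. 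This is the same device the paper uses to prove hom-orthogonality (Corollary \ref{cor: hom orthogonality}), and it is arguably more conceptual and self-contained, at the cost of having to verify that the type-$A$ subrepresentation criterion lifts to string modules of $\widetilde A_{a,b}^\vare$ via the universal cover; that verification is legitimate (the paper itself uses the same submodule combinatorics in the proof of Theorem \ref{thm: stable iff blue above and red below}), and the modules you invoke need not be exceptional for Theorem \ref{thm: order thms, general} to apply, so there is no gap. The paper's route is shorter given that the point lemmas are already established and are needed elsewhere; yours buys independence from them for this proposition.
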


\begin{proof}
We exclude first the case when $i<j$ are congruent modulo $n$. In that case, $f_i(c^0)=f_j(c^0)$. By the Finiteness Lemma, there must be either a negative point  $\ell\triangleleft i$ or a positive point $k\triangleright i$. By adding multiples of $n$ we can assume $i<k,\ell<j$. Then $M_{ij}$ is unstable by Theorem \ref{thm: stable iff blue above and red below}. In the remaining cases when $i<j$ are not congruent module $n$, by the 2-point Lemma \ref{2pt lem} we must have $j-i<n$.
\end{proof}

\subsection{The upper bound}
	From the above lemmas, we can determine an upper bound for the number of stable intersections when finite: By the Finiteness Lemma there must be at least one positive point below a negative point. We will show that the following pattern of points on the critical line is the one which gives the largest finite number of stable intersections.
	\begin{equation}\label{max pattern of points on critical line}
\begin{array}{rc}
 	\text{$a-1$ positive points}&\begin{cases} +\\
	\vdots\\
	+
    \end{cases}
    \\
 \ell & -\\
 k & +\\
 	\text{$b-1$ negative points}&\begin{cases} -\\
	\vdots\\
	-
    \end{cases}
\end{array}
\end{equation}

\begin{thm}\label{thm M1a}
Maximal green sequences for a quiver of type $\widetilde A_{a,b}$ have length at most
\[
	\binom{a+b}{2} + ab.
\]
In every maximal green sequence of this length, the pattern of points on the critical line must be as given in \eqref{max pattern of points on critical line} above.
\end{thm}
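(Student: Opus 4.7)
The plan is to bound the length of any maximal green sequence by summing, over each of the $\binom{n}{2}$ pairs of distinct residues $\{\bar i,\bar j\}\subset\ZZ/n$, the maximum possible number of stable modules with endpoint residues $\{\bar i,\bar j\}$. By the 2-, 3-, and 4-point Lemmas (Lemmas \ref{2pt lem}, \ref{3pt lem}, \ref{4pt lem}) this maximum is either $1$ or $2$. Writing $F$ for the number of \emph{fat} pairs (those contributing $2$), the MGS length is at most
\[
\binom{n}{2}+F,
\]
so the theorem reduces to $F\le ab$, with equality forcing the critical-line pattern \eqref{max pattern of points on critical line}.

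Let $i^*$ be the position from the top of the critical line of the first ``$-$'' and $j^*$ that of the last ``$+$''; both exist because $a,b\ge1$. The Finiteness Lemma \ref{finiteness lemma} rules out the arrangement in which every ``$+$'' lies above every ``$-$'', so any finite MGS forces $i^*<j^*$. A case analysis on the signs $(s_i,s_j)$ at two positions $i<j$ shows that the 3-point Lemma applies (tightening the bound from $2$ to $1$) precisely when $j<j^*$ or $i>i^*$: a positive witness then lies at some position $>j$, or a negative witness at some position $<i$. Hence bound $2$ requires $i\le i^*$ and $j\ge j^*$. In the same-sign and inversion configurations the 2-point Lemma delivers this bound $2$ directly. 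In the delicate \emph{normal} configuration $s_i=+,\,s_j=-$, which forces the strict inequalities $i<i^*$ and $j>j^*$, the 2-point Lemma fails but the 4-point Lemma applies: both $i^*$ (carrying ``$-$'') and $j^*$ (carrying ``$+$'') lie in $(i,j)$ with $i^*<j^*$, producing exactly the required $``-+"$ witness. Thus the fat pairs are precisely those $(i,j)$ with $1\le i\le i^*$ and $j^*\le j\le n$ (and $i<j$ is automatic from $i^*<j^*$), so $F=i^*\,(n-j^*+1)$.

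To finish, positions $1,\ldots,i^*-1$ are all ``$+$'' giving $i^*-1\le a$, and positions $j^*+1,\ldots,n$ are all ``$-$'' giving $n-j^*\le b$. Each extreme $i^*=a+1$ or $j^*=a$ forces all $a$ positives (respectively all $b$ negatives) into one block, which contradicts $i^*<j^*$. Therefore $i^*\le a$ and $j^*\ge a+1$, yielding $F\le a\cdot b$ and the bound $\binom{a+b}{2}+ab$. Equality $F=ab$ requires $i^*=a$ and $j^*=a+1$, which unpacks to the pattern $a-1$ plusses, one ``$-$'' (the $\ell$), one ``$+$'' (the $k$), then $b-1$ minuses --- exactly \eqref{max pattern of points on critical line}. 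The main obstacle I expect is the case analysis above, especially the verification that the 4-point Lemma is automatically available in the normal fat case; once the characterization $F=i^*(n-j^*+1)$ is established, the remaining optimization is elementary.
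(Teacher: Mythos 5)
Your proposal is correct and follows essentially the same route as the paper's proof: the decomposition over the $\binom{n}{2}$ residue pairs, the use of the Finiteness Lemma to get a positive point below a negative one, the 3-point Lemma to cut non-extremal pairs down to one module, and the 2-/4-point Lemmas to cap the remaining pairs at two, with the count of ``fat'' pairs being $i^*(n-j^*+1)=a'b'\le ab$ and equality forcing the pattern \eqref{max pattern of points on critical line}. The only difference is notational --- you index by position from the top of the critical line and organize the case analysis by the endpoint signs, where the paper uses the sets $A'$, $B'$ and its Claims (a), (b) --- so this is a faithful reconstruction rather than a new argument.
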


\begin{proof} Considering the points on the critical line, let $f_k(c^0)$ be minimal among all points with $\vare_k=+$ and let $f_\ell(c^0)$ be maximal for $\vare_\ell=-$. By the Finiteness Lemma \ref{finiteness lemma}, $k\triangleleft \ell$. By adding a multiple of $n$ to $\ell$ if necessary, we may assume $k<\ell<k+n$.

Let $A$ denote the set of all $j$ with $k<j<k+n$ so that $k\triangleleft j$. Let $A'$ be the subset of $A$ consisting of those $j$ with $\ell \trianglelefteq j$. Then $A'$ contains $\ell$ and the points above $\ell$ which are all positive. Since $k$ is positive and below $\ell$, $a'=|A'|\le a$.

Similarly, let $B$ denote the set of all $i$ with $\ell-n<i<\ell$ so that $i\triangleleft \ell$ and let $B'$ be the subset of $B$ consisting of those $i$ with $i\trianglelefteq k$. Then $b'=|B'|\le b$. On the critical line $t=c^0$, the set of points corresponding to $A$, resp $A'$, is complementary to the set corresponding to $B'$, resp $B$. So, $|A|=n-b'$ and $|B|=n-a'$. 

Consider all pairs $i,j$ (up to translation by $n$) so that $i\triangleleft j$. Since $f_p(c^0)$ takes $n$ values (given by $p=1,2,\cdots,n$) there are $\binom n2$ such pairs $i,j$.

\underline{Claim (a)}: If $i\in B'$ and $j\in A'$ there are at most 2 values of $p$ so that $M_{i,j+pn}$ is stable, namely $p=0,-1$ (i.e., only $M_{ij}$ and $M_{i,j-n}$ might be stable.) 

\underline{Claim (b)}: If $i\notin B'+n\ZZ$ or $j\notin A'+n\ZZ$, there is at most one value of $p$ so that $M_{i,j+pn}$ is stable.

In other words, the pair $(i,j)$ gives at most one stable module unless $i\in B'$ and $j\in A'$ in which case there might be two stable modules. So, the Claim implies that there are at most $\binom n2+a'b'$ stable modules. This is maximal when $a'=a$ and $b'=b$ which happens exactly when the pattern of points on the critical line is given by \eqref{max pattern of points on critical line}. Thus, these claims imply the Theorem.

 Proof of Claim (b): Suppose $i\notin B'+n\ZZ$. Then $i,j\in A$ (up to translation by a multiple of $n$). By the 3-point Lemma (\ref{3pt lem}), only $M_{ij}$ might be stable for $i,j\in A$. Similarly, if $j\notin A'+n\ZZ$ then we may assume $i,j\in B$ in which case, again, only $M_{ij}$ might be stable.
 
 Proof of Claim (a): Given $i\in B', j\in A'$ we apply the 4-point Lemma \ref{4pt lem} to $\ell<i+n<\ell+n$ and $k<j<k+n$ to give the statement of Claim (a) in the case when $i\neq k$ and $j\neq \ell$. In the special cases when either $i=k$ or $j=\ell$, the statement of Claim (a) is give by the 2-point Lemma \ref{2pt lem}.

 This proves Claims (a) and (b). The Theorem follows.
\end{proof}

Claims (a) and (b) in the above proof also imply the following converse of the Finiteness Lemma. Given a green path $\gamma$ with corresponding functions $f_i$ we say that $(k,\ell)$ is an {\bf essential pair} for $\gamma$ if $\vare_k=+$, $\vare_\ell=-$ and $k\triangleleft\ell$ ($f_k(c^0)<f_\ell(c^0)$).

\begin{thm}[Finiteness Theorem]\label{converse of Finiteness Lemma} There are only finitely many $\gamma$ stable modules if and only if there exists an essential pair for $\gamma$
\end{thm}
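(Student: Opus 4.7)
The plan is to prove both directions of the biconditional using tools already established in the excerpt, treating the theorem as essentially a repackaging of the Finiteness Lemma together with the counting argument inside the proof of Theorem \ref{thm M1a}.

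For the direction ``finitely many stable modules $\Rightarrow$ an essential pair exists,'' I would simply invoke the contrapositive of the Finiteness Lemma (Lemma \ref{finiteness lemma}). If no essential pair exists for $\gamma$, then for every positive point $k$ and every negative point $\ell$ on the critical line we have $k \triangleright \ell$, which is precisely the hypothesis of Lemma \ref{finiteness lemma}; that lemma then produces infinitely many non-isomorphic stable modules, contradicting finiteness.

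For the reverse direction, given an essential pair $(k_0, \ell_0)$ I would choose $k$ minimizing $f_k(c^0)$ among indices with $\vare_k = +$ and $\ell$ maximizing $f_\ell(c^0)$ among indices with $\vare_\ell = -$; then
\[
 f_k(c^0) \le f_{k_0}(c^0) < f_{\ell_0}(c^0) \le f_\ell(c^0),
\]
so $k \triangleleft \ell$, and after translating $\ell$ by a multiple of $n$ we may assume $k < \ell < k+n$. I would then define the index sets $A, A', B, B'$ exactly as in the proof of Theorem \ref{thm M1a} and invoke Claims (a) and (b) of that proof verbatim: for each pair $(i,j)$ with $i \triangleleft j$, there are at most two values of $p$ giving a stable $M_{i,j+pn}$, and at most one value unless $i \in B'$ and $j \in A'$. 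This bounds the total number of stable modules by $\binom{n}{2} + a'b' \le \binom{n}{2} + ab$, which is finite.

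The main obstacle, and the only thing that really needs checking, is that Claims (a) and (b) in the proof of Theorem \ref{thm M1a} were presented in the context of a maximal green sequence, but their proofs invoke only the 2-, 3-, and 4-Point Lemmas, each of which applies to an arbitrary generic green path $\gamma$ without any finiteness assumption on the stable set. I would point this out explicitly and note that genericity may be assumed without loss of generality, since a sufficiently small perturbation of $\gamma$ neither creates nor destroys stable crossings and does not affect the ordering on the critical line. With that remark the reverse direction becomes a direct corollary of the already-proven claims and requires no further calculation.
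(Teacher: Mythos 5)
Your proposal is correct and follows essentially the same route as the paper: the forward direction is the contrapositive of the Finiteness Lemma \ref{finiteness lemma}, and the converse reuses Claims (a) and (b) from the proof of Theorem \ref{thm M1a} to bound the number of stable modules by $\binom{n}{2}+a'b'$. Your added observation that those claims rest only on the 2-, 3-, and 4-Point Lemmas (and hence need no prior finiteness assumption) is a worthwhile clarification, but it does not change the argument.
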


\begin{proof} This condition is necessary by the Finiteness Lemma \ref{finiteness lemma}. The converse follows from the proof of Theorem \ref{thm M1a} above since Claims (a) and (b) in the proof imply that there are at most $\binom n2+a'b'$ stable modules where $a'=|A'|,b'=|B'|$ are the sizes of the two finite sets in the proof of Theorem \ref{thm M1a} which exist when there is an essential pair.
\end{proof}

\subsection{Sets of stable modules}

Examining the details of the proof of Theorem \ref{thm M1a}, we obtain the list of stable modules of the maximum length:

\begin{defn}\label{def: Skell}
For any $k<\ell<k+n$ with $\vare_k=+,\vare_\ell=-$, let $A_{k\ell}$, $B_{k\ell}$ denote the following sets of integers.
\[
	A_{k\ell}:=\{\ell\}\cup \{j \,:\, k<j<k+n,\,	\vare_j=+ \}
\]
\[
	B_{k\ell}:=\{k\}\cup\{i\,:\,\ell-n<i<\ell,\, \vare_i=- \}
\]
Note that $|A_{k\ell}|=a$, $|B_{k\ell}|=b$.
Let $\cS_{k\ell}$ denote the set of modules in the following list.
\begin{enumerate}
\item $M_{ij}$ where $i,j\in A_{k\ell}$ with $i<j$
\item $M_{ij}$ where $i,j\in B_{k\ell}$ with $i<j$
\item $M_{ij}$ (or $M_{ji}$ if $i>j$) where $j\in A_{k\ell}$, $i\in B_{k\ell}$
\item $M_{i,j-n}$ (or $M_{j-n,i}$ if $i>j-n$) where $j\in A_{k\ell}$, $i\in B_{k\ell}$ 
\end{enumerate}
\end{defn}

\begin{thm}\label{thm: Skell are the longest sets}\label{thm M2a}
For any maximal green sequence of length $\binom{a+b}{2} + ab$, the set of stable modules is one of the sets $\cS_{k\ell}$ defined above.
\end{thm}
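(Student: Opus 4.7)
The plan is to read off the conclusion directly from the proof of Theorem \ref{thm M1a}, which already identifies \emph{which} modules can be stable, not just how many. By Theorem \ref{thm M1a}, any maximal green sequence of length $\binom{a+b}{2}+ab$ must realize the critical line pattern in \eqref{max pattern of points on critical line}. This pattern determines a distinguished positive point $k$ (the lowest positive point on the critical line) and a distinguished negative point $\ell$ (the highest negative point); after translating $\ell$ by a multiple of $n$, we may arrange $k<\ell<k+n$.

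In the maximal pattern the sets $A,A'$ and $B,B'$ appearing in the proof of Theorem \ref{thm M1a} coincide, and one checks directly that $A=A'=A_{k\ell}$ and $B=B'=B_{k\ell}$: the indices $j$ with $k\triangleleft j$ and $k<j<k+n$ are exactly $\ell$ together with the $a-1$ positive points above $\ell$, and similarly for $B$. Claims (a) and (b) in the proof of Theorem \ref{thm M1a} then bound the stable modules of the sequence as follows. For each pair $(i,j)$ of indices in $A_{k\ell}$ with $i<j$, only $M_{ij}$ can be stable; likewise for pairs in $B_{k\ell}$. For a cross pair $i\in B_{k\ell}$, $j\in A_{k\ell}$, only the two translates $M_{i,j}$ and $M_{i,j-n}$ (with indices swapped if needed so that the first is smaller) can be stable. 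Every stable module in the sequence therefore lies in $\cS_{k\ell}$.

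To finish I will count: the first and second families contribute at most $\binom a2$ and $\binom b2$ stable modules, while the cross pairs contribute at most $2ab$, for a total upper bound of $\binom a2+\binom b2+2ab=\binom{a+b}{2}+ab$. Since the sequence has exactly this length, every ``at most'' above is an equality: each slot must be saturated. In particular the full list of candidate modules (1)--(4) in Definition \ref{def: Skell} actually occurs, so the set of stable modules equals $\cS_{k\ell}$.

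The main technical point to verify carefully is the bookkeeping that the four families in Definition \ref{def: Skell} are pairwise disjoint and together account for all $\binom{a+b}{2}+ab$ slots, so that saturation of the bound forces equality with $\cS_{k\ell}$ rather than mere inclusion. This reduces to checking that the translations $j\mapsto j-n$ used in family (4) do not identify any module from family (4) with one from families (1)--(3), which follows from $k<\ell<k+n$ together with the elementary observation that an indecomposable $M_{pq}$ is determined by $(p,q)$ with $p<q$. Apart from this checking, the argument is a direct corollary of Theorem \ref{thm M1a}.
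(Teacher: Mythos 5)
Your proposal is correct and follows essentially the same route as the paper: both deduce from Theorem \ref{thm M1a} that the critical line pattern must be \eqref{max pattern of points on critical line}, identify $A'=A_{k\ell}$ and $B'=B_{k\ell}$ in that case, and read off from Claims (a) and (b) that the stable modules lie in $\cS_{k\ell}$, with equality forced by the cardinality count. Your explicit saturation argument at the end merely spells out a step the paper leaves implicit.
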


\begin{proof}
The sets $A',B'$ in the proof of Theorem \ref{thm M1a} are equal to $A_{k\ell}, B_{k\ell}$ in the case when the pattern of points on the critical line is as given in \eqref{max pattern of points on critical line}. The possible stable modules of Claim (b) in the proof are listed in (1) and (2) in Definition \ref{def: Skell}. The possible stable modules of Claim (a) in the proof are listed in (3) and (4) in Definition \ref{def: Skell}.
\end{proof}

\begin{lem}\label{lem: Skl are distinct}
If $(a,b)\neq (2,2)$, the sets $\cS_{k\ell}$ are distinct.
\end{lem}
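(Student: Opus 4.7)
The plan is to show that $(k,\ell)$ is intrinsically recoverable from $\cS_{k\ell}$ whenever $(a,b) \neq (2,2)$. The structural observation driving the argument is that, reduced modulo $n$, the sets $A_{k\ell}$ and $B_{k\ell}$ form a bipartition of $\ZZ/n\ZZ$: $A_{k\ell}$ consists of all positive vertices except $k$ together with $\ell$, while $B_{k\ell}$ consists of all negative vertices except $\ell$ together with $k$. Given this labeled bipartition, one recovers $k$ as the unique positive vertex of $B_{k\ell}$ and $\ell$ as the unique negative vertex of $A_{k\ell}$, so the task reduces to reading the labeled bipartition off of $\cS_{k\ell}$.

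To recover the underlying unordered bipartition, I would use the following dichotomy. For each unordered pair of distinct residues $\{v,w\} \subset \ZZ/n\ZZ$, there are exactly two indecomposable brick modules over $\widetilde A_{a,b}^\vare$ whose dimension vectors are the two complementary arcs, of lengths $r$ and $n-r$, between $v$ and $w$ in the cyclic order. Checking Definition \ref{def: Skell} case by case: if both residues lie in $A_{k\ell}$ modulo $n$, then only the arc given by the window representatives of $v,w$ in $(k,k+n)$ appears, as a type (1) module; symmetrically for $B_{k\ell}$ via type (2); and if $v$ and $w$ lie in opposite parts, then the pair $(i,j)$ of integer representatives with $i \in B_{k\ell}$, $j \in A_{k\ell}$ yields both arcs, one each from types (3) and (4). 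Ruling out the ``other arc'' of a same-pair from types (3) or (4) reduces to the observation that types (3) and (4) are indexed by pairs whose residues lie in opposite parts of the bipartition, whereas the other arc of a same-pair would need both endpoints in the same part. Hence $\{v,w\}$ is a cross-pair if and only if $\cS_{k\ell}$ contains both of its arcs, and this criterion reconstructs the unordered bipartition of $\ZZ/n\ZZ$.

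To label the two parts as $A$ and $B$, I would distinguish them as follows. When $a \neq b$, the sizes $|A_{k\ell}| = a$ and $|B_{k\ell}| = b$ separate the two sets. When $a = b$, I compare sign profiles instead: $A_{k\ell}$ contains $a - 1$ positive and $1$ negative vertex, while $B_{k\ell}$ contains $1$ positive and $b - 1 = a - 1$ negative vertices, and these profiles differ exactly when $a - 1 \neq 1$, i.e., $a \neq 2$. Combined, this handles every case except $(a,b) = (2,2)$, which is exactly the exclusion in the hypothesis; thus the labeling is forced and $(k,\ell)$ is uniquely recovered from $\cS_{k\ell}$. The main obstacle I anticipate is the bookkeeping for the two-arc dichotomy --- specifically, checking that no dimension vector is realized by two modules of different types in the definition --- but this reduces to the disjointness of $A_{k\ell}$ and $B_{k\ell}$ modulo $n$ and should be routine.
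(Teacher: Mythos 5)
Your overall strategy (recover the residue bipartition $\bar A_{k\ell}\sqcup\bar B_{k\ell}$ of $\ZZ/n\ZZ$ from $\cS_{k\ell}$ by detecting which residue pairs are ``cross-pairs'', then read off $k$ as the unique positive element of $B_{k\ell}$ and $\ell$ as the unique negative element of $A_{k\ell}$) is sound and is essentially the paper's argument. But the detection criterion you build it on is false. It is not true that the modules of types (3) and (4) attached to a cross-pair $(i,j)$, $i\in B_{k\ell}$, $j\in A_{k\ell}$, are the two complementary arcs between $\bar i$ and $\bar j$. That holds only when $i<j$. When $i>j$ (which happens exactly when some positive vertex $j$ and some negative vertex $i$ satisfy $k<j<i<\ell$), type (3) gives $M_{ji}$ of length $i-j=r$ while type (4) gives $M_{j-n,i}$ of length $n+r$, with $\undim M_{j-n,i}=\eta+\undim M_{ji}$ --- not the complementary arc $\eta-\undim M_{ji}$, which in fact does \emph{not} lie in $\cS_{k\ell}$. (This also falsifies your opening premise that the only bricks with endpoint residues $\{v,w\}$ are the two arcs; the paper's own table for $\widetilde A_{2,2}^{++--}$, $(k,\ell)=(1,4)$, lists $M_{27}$, a brick of length $5>n$.) Concretely, take $\widetilde A_{3,3}^{+++---}$ with $(k,\ell)=(1,6)$, so $B_{16}=\{1,4,5\}$ and $A_{16}=\{2,3,6\}$: the cross-pair $(i,j)=(4,2)$ contributes $M_{24}$ and $M_{-4,4}$ (length $8$), while the second arc $M_{4,8}$ is absent. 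Your criterion therefore classifies $\{\bar 2,\bar 4\}$ (and likewise $\{\bar 2,\bar 5\},\{\bar 3,\bar 4\},\{\bar 3,\bar 5\}$) as same-pairs, and the five pairs it does flag, $\{1,2\},\{1,3\},\{1,6\},\{4,6\},\{5,6\}$, are not the cross-pair set of any bipartition of $\ZZ/6\ZZ$. So the reconstruction fails exactly where it is needed.

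The repair is small but essential: replace ``$\cS_{k\ell}$ contains both arcs of $\{v,w\}$'' by ``$\cS_{k\ell}$ contains two non-isomorphic modules whose endpoint residues are $\{v,w\}$''. Same-pairs contribute exactly one such module (type (1) or (2)), cross-pairs exactly two (types (3) and (4), which have different lengths), and the disjointness of $\bar A_{k\ell}$ and $\bar B_{k\ell}$ rules out cross-contamination between the types. This is precisely the paper's ``doubly paired'' relation. With that substitution your labeling step --- distinguishing the two parts by cardinality when $a\neq b$ and by sign profile ($a-1$ pluses versus one plus) when $a=b\neq 2$ --- does go through and is a clean alternative to the paper's case split between $a=1$ and $a\ge 3$; but as written the proof does not establish the lemma.
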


\begin{proof}
We will show that, if $a=1$ or $a\ge 3$, the sets $\cS_{k\ell}$ are distinct, i.e., the set $\cS=\cS_{k\ell}$ determines $k$ and $\ell$ (modulo $n$). The case $b\neq2$ is analogous.

Case 1: $a=1$. Then $k$ is unique modulo $n$. Given $k$, the set $\cS=\cS_{k\ell}$ determines $\ell$ since $\ell$ is the only integer between $k$ and $k+n$ for which $M_{k\ell}$ and $M_{\ell-n,k}$ are both in the set $\cS$. So, the $b$ sets $\cS_{k\ell}$ are distinct.

Case 2: $a\ge3$. Then we claim that the set $\cS=\cS_{k\ell}$ uniquely determines $\ell$ modulo $n$. The reason is that $\ell$ is the unique integer modulo $n$ with $\vare_\ell=-$ which is ``doubly paired'' with only one integer $k$ modulo $n$ with sign $\vare_k=+$ where, by ``doubly paired'', we mean there are two values of $p$ for which $M_{\ell,k+pn}$ are in $\cS$. (The negative points not equal to $\ell$ are paired with the $a-1\ge2$ positive points not equal to $k$.) Furthermore, $\ell$ determines $k$ modulo $n$ since $k$ and $\ell$ are doubly paired.
\end{proof}

In the next section we will show (Theorem \ref{thm M2b}) that each of the sets $\cS_{k\ell}$ occur as the stable set of modules of some maximal green sequence for $\widetilde A^\vare_{a,b}$ for any sign function $\vare$. Together with the above lemma, this implies the following 

\begin{thm}[Theorem M2]\label{thm M2}
For $(a,b)\neq(2,2)$ there are exactly $ab$ distinct sets of stable modules for any maximal green sequence for $\widetilde A^\vare_{a,b}$ with the maximal length $\binom {a+b}2+ab$. 
\end{thm}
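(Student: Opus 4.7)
The plan is to assemble Theorem M2 from three ingredients already on the table: the ``upper bound'' Theorem \ref{thm M2a}, the distinctness Lemma \ref{lem: Skl are distinct}, and the forthcoming existence Theorem \ref{thm M2b} from Section \ref{sec6}. Since the counting parameter $ab$ decomposes as (number of choices for a positive residue $k \bmod n$) $\times$ (number of choices for a negative residue $\ell \bmod n$), the whole claim reduces to verifying that the map $(k,\ell) \mapsto \cS_{k\ell}$, defined on this index set of size $ab$, is a bijection onto the collection of stable sets of maximum-length MGSs.

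First I would argue the upper bound. Let $\cS$ be the set of stable modules of some maximal green sequence for $\widetilde A_{a,b}^\vare$ of length $\binom{a+b}{2}+ab$. By Theorem \ref{thm M2a}, $\cS = \cS_{k\ell}$ for some pair $(k,\ell)$ with $\vare_k = +$, $\vare_\ell = -$ and $k < \ell < k+n$. The data $(k,\ell)$ is naturally a pair of residue classes modulo $n$, one positive and one negative; there are $a$ positive and $b$ negative residues in $[1,n]$, so the index set has exactly $ab$ elements and hence at most $ab$ sets $\cS_{k\ell}$ arise.

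Second I would invoke Lemma \ref{lem: Skl are distinct}, which (under the hypothesis $(a,b)\neq(2,2)$) guarantees that distinct index pairs $(k,\ell)$ yield distinct sets $\cS_{k\ell}$. Combined with the previous step, this gives the sharp upper bound of $ab$ distinct stable sets when such MGSs exist.

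Third, to turn the upper bound into an equality, every $\cS_{k\ell}$ must actually occur as the stable set of some maximal green sequence. This is precisely the content of Theorem \ref{thm M2b}, proved in Section \ref{sec6} by explicitly constructing (via chord diagrams in the stability polygon) a green path whose stable set is $\cS_{k\ell}$ for each admissible $(k,\ell)$. Granting that construction, the three ingredients combine into the clean bijection ``$(k,\ell) \leftrightarrow \cS_{k\ell}$'', and the theorem follows. The only substantive input not already verified at this stage in the paper is the existence Theorem \ref{thm M2b}; that existence step is the real obstacle, since the upper bound and distinctness are in hand. Everything else in the proof of Theorem M2 is bookkeeping: counting residues and stringing together three cited results.
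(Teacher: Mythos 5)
Your proposal is correct and follows exactly the paper's own route: Theorem \ref{thm M2a} to reduce to the sets $\cS_{k\ell}$, Lemma \ref{lem: Skl are distinct} for distinctness when $(a,b)\neq(2,2)$, Theorem \ref{thm M2b} for realization, and the count of $a$ positive times $b$ negative residues. Nothing is missing.
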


The case $(a,b)=(2,2)$ is an easy exercise:

\begin{prop}\label{prop: Skl are not distinct for (2,2)}
For the quivers $\widetilde A_{2,2}^{+-+-}$, resp. $\widetilde A_{2,2}^{++--}$, there are $2$, resp. $3$, possible sets of stable modules for the maximal green sequences of the maximum length which is $10$.
\end{prop}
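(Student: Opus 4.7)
My plan is to prove the proposition by direct enumeration of the four sets $\cS_{k\ell}$ in each case and identifying the coincidences. By Theorem \ref{thm M2a} and the (forthcoming) Theorem \ref{thm M2b}, the stable sets of the maximum-length maximal green sequences on $\widetilde A^\vare_{2,2}$ are precisely the $\cS_{k\ell}$ with $\vare_k=+$, $\vare_\ell=-$ and $k<\ell<k+4$, and the maximum length is $\binom{4}{2}+4=10$. So the proposition reduces to counting distinct sets among the four candidates in each case.

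First I would list the four valid pairs. For $\widetilde A_{2,2}^{+-+-}$ the positive indices mod $4$ are $\{1,3\}$ and the negative are $\{2,4\}$, so the pairs are $(1,2)$, $(1,4)$, $(3,4)$, $(3,6)$. For $\widetilde A_{2,2}^{++--}$ the positive indices are $\{1,2\}$ and the negative are $\{3,4\}$, so the pairs are $(1,3)$, $(1,4)$, $(2,3)$, $(2,4)$. For each pair I would read off $A_{k\ell}$ and $B_{k\ell}$ from Definition \ref{def: Skell}, then tabulate the ten members of $\cS_{k\ell}$ as index pairs $(i,j)$ with $i<j$, normalising via $M_{ij}\cong M_{i+n,j+n}$ so that the resulting pairs are directly comparable across tables.

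Comparing the tables: for $+-+-$ I expect $\cS_{1,2}=\cS_{3,4}$ and $\cS_{1,4}=\cS_{3,6}$. Each equality arises because shifting both $k$ and $\ell$ by $2$ permutes the four constituent lists (1)--(4) of Definition \ref{def: Skell} among themselves once pairs $(i,j)$ with $i<0$ are rewritten as $(i+4,j+4)$. Distinctness of the two resulting sets I would witness by exhibiting one element in the symmetric difference, e.g.\ $M_{0,3}\in \cS_{1,2}\setminus \cS_{1,4}$. For $++--$ the same shift-by-period trick yields $\cS_{1,3}=\cS_{2,4}$, while $\cS_{1,4}$ and $\cS_{2,3}$ are each distinct from all the others: for instance $M_{1,4}\in \cS_{1,4}$ appears in neither $\cS_{1,3}$ nor $\cS_{2,3}$, and $M_{0,5}\in \cS_{2,3}$ appears in neither $\cS_{1,3}$ nor $\cS_{1,4}$, as direct inspection of the four tables shows.

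This is pure combinatorial bookkeeping, so there is no substantive obstacle. The only mild traps are (i) consistently applying the swap convention $M_{ij}=M_{ji}$ when $i>j$ in items (3) and (4) of Definition \ref{def: Skell}, and (ii) reducing modulo the translation $(i,j)\mapsto(i+n,j+n)$ before comparing pairs; some of the ``wrap-around'' modules produced in item (4), such as $M_{-2,3}$ in the $++--$ computation, have dimension vectors with an entry equal to $2$, and it is important to treat them as genuinely distinct indecomposables rather than identifying them with a $0/1$-vector module.
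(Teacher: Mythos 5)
Your proposal is correct and follows essentially the same route as the paper: enumerate the four essential pairs $(k,\ell)$ in each orientation, tabulate the ten modules of each $\cS_{k\ell}$ from Definition \ref{def: Skell} up to the translation $M_{ij}\cong M_{i+n,j+n}$, and observe the coincidences $\cS_{12}=\cS_{34}$, $\cS_{14}=\cS_{36}$ (for $+-+-$) and $\cS_{13}=\cS_{24}$ (for $++--$), with the remaining sets distinguished by explicit witnesses. All of your claimed identifications and witnesses check out against the paper's tables (the only caveat is that your ``shift-by-period'' heuristic is not literally a quiver symmetry in the $++--$ case — there the coincidence comes from $A_{24}=B_{13}+4$, $B_{24}=A_{13}$ — but your fallback of direct tabulation settles it regardless).
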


\begin{proof}
{
For $\widetilde A_{2,2}^{+-+-}$, the possible values of the essential pair $(k,\ell)$ are $(1,2),(1,4),(3,4),(3,6)$. But $A_{14}=\{3,4\}=B_{36}$ and both elements of $A_{36}=\{5,6\}$ are $n=4$ more than the elements of $B_{14}=\{1,2\}$. So, $\cS_{14}=\cS_{36}$. Similarly, $\cS_{12}=\cS_{34}$. So, there are only two distinct sets of the form $\cS_{k\ell}$.
}

{
For $\widetilde A_{2,2}^{++--}$, the formula for $\cS_{k\ell}$ gives: 
\[
\begin{array}{c|cc|cc}
	(k,\ell) & B_{k\ell} & A_{k\ell} & \text{$ij$ with $M_{ij}\in \cS_{k\ell}$}\\
	\hline
	(1,3) & \{1,0\} & \{3,2\} & 01,23,02,03,12,13,24,25,34,35 &=\cS_{24}\\
	(1,4) & \{1,3\} & \{4,2\} & 24,13, 12, 14, 23, 34, 25, 27, 01, 03 \\
	(2,3) & \{2,0\} & \{3,5\} & 02, 35, 03, 05, 23, 25, 34, 36, 01,12\\
	(2,4)& \{2,3\} & \{4,5\} & 23, 01, 24, 25, 34, 35, 02, 03, 12, 13 &=\cS_{13}
\end{array}
\]
Thus $\cS_{14}\neq \cS_{23}$ with neither equal to $\cS_{13}=\cS_{24}$. So, there are three distinct sets $\cS_{k\ell}$.
}
\end{proof}



\subsection{Deletion Lemma}

One immediate consequence of Theorem \ref{thm: stable iff blue above and red below} and Corollary \ref{cor: stability in terms of chords Vij}, two theorems we had earlier, is the observation that, when a curve $L_k$ is deleted, stable intersections of other curves remain stable. However, previously unstable intersections, possibly an infinite number of them, might become stable. Using Theorem \ref{converse of Finiteness Lemma} we can prevent this from happening.

We need some notation. For $Q$ a quiver of type $\widetilde A_{a,b}$ and $X$ a subset of the arrow set of $Q$ having at most $a+b-2$ elements, we say that $Q'$ is obtained by {\bf collapsing} the arrows in $X$ if $Q'$ is given by {deleting} each arrow $\alpha_i$ (from $i$ to $i+1$ or $i+1$ to $i$) in $X$ and identifying the source and target of $\alpha_i$. This induces an epimorphism 
\[
\overline\pi:Q_0=\{1,2,\cdots,n\}\to Q_0'=\{1,2,\cdots, n-|X|\}
\]
which is the reduction modulo $n-|X|$ of a map $\pi:\ZZ\onto \ZZ$ which is periodic in the sense that $\pi(i+n)=\pi(i)+n-|X|$ and so that $\pi(1)=1$, $\pi(i)\le \pi(j)$ when $i\le j$ and $\pi(i)=\pi(i+1)$ when $\alpha_i\in X$. Given a module $M_{ij}$ for $KQ$, let $\overline\pi(M_{ij})$ be the $KQ'$ module given by
\[
	\overline\pi(M_{ij})=\begin{cases} 0 & \text{if $X$ contains $i$ or $j$} \\
   M_{\pi(i)\pi(j)} & \text{otherwise}
    \end{cases}
\]

\begin{prop}[Deletion Lemma]\label{deletion lemma}
Let $M_1,\cdots,M_m$ be a maximal green sequence for $\Lambda=KQ$ for $Q$ a quiver of type $\widetilde A_{a,b}$. Let $\gamma$ be a corresponding green path. Let $Q'$ be a quiver obtained from $Q$ by collapsing a set of arrows $X$ in $Q$ where $X$ is disjoint from at least one essential pair for $\gamma$. Then there is a maximal green sequence for $KQ'$ containing as a subsequence the nonzero elements of the sequence $\overline\pi(M_1),\cdots,\overline\pi(M_m)$. Furthermore, if the first sequence is linear, so is the second.\qed
\end{prop}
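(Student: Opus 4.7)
My plan is to push the given green path $\gamma$ down to a green path $\gamma'$ for $KQ'$ whose wire diagram is obtained from that of $\gamma$ by deleting exactly those wires $L_i$ with $\alpha_i \in X$. Writing $\gamma(t) = t\bbb(t) - \aaa(t)$ as in Section \ref{sec4}, I would set $a'_p(t) := \sum_{i \in \pi^{-1}(p)} a_i(t)$ and $b'_p(t) := \sum_{i \in \pi^{-1}(p)} b_i(t)$ for each $p \in Q'_0$, and let $\gamma'(t) = t\mathbf{b}'(t) - \mathbf{a}'(t)$. Each $b'_p$ is a sum of positive numbers and hence positive, so $\gamma'$ is a legitimate green path for $KQ'$. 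A direct computation (choosing $m'_0 = m_0$) shows that the wire functions for $\gamma'$ satisfy $f'_{\pi(i)} = f_i$ whenever $\alpha_i \notin X$; in particular the surviving wires, together with all their crossing times $t_{ij}$, are literally unchanged.

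Second, I would transfer stability and then bound it. For any $\gamma$-stable $M_{ij}$ with $\alpha_i, \alpha_j \notin X$, uniqueness of $t_{ij}$ persists because deletion cannot create new crossings, and the characterization in Theorem \ref{thm: stable iff blue above and red below} passes intact to $\gamma'$: a sub-collection of the positive, resp.\ negative, wires between $L_i$ and $L_j$ continues to lie above, resp.\ below, the intersection point. Hence $\overline\pi(M_{ij}) = M_{\pi(i)\pi(j)}$ is $\gamma'$-stable. To rule out the danger (flagged in the paragraph preceding the lemma) that deletion introduces infinitely many new stable modules, I would invoke the essential pair hypothesis. Since $\pi^{-1}$ partitions $\{1,\ldots,n\}$, the total sums $\sum a_i$ and $\sum b_i$ are preserved, yielding $f'_{n'} - f'_0 = f_n - f_0$; therefore the critical line of $\gamma'$ coincides with that of $\gamma$, namely $t = c^0$. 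Given an essential pair $(k,\ell)$ for $\gamma$ with $\alpha_k, \alpha_\ell \notin X$, we then have $\vare'_{\pi(k)} = +$, $\vare'_{\pi(\ell)} = -$, and $f'_{\pi(k)}(c^0) = f_k(c^0) < f_\ell(c^0) = f'_{\pi(\ell)}(c^0)$, producing an essential pair for $\gamma'$. The Finiteness Theorem \ref{converse of Finiteness Lemma} then guarantees that $\gamma'$ determines a maximal green sequence of finite length.

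Finally, if $N_1, \ldots, N_{m'}$ is the MGS arising from $\gamma'$ at times $s_1 < \cdots < s_{m'}$, each nonzero $\overline\pi(M_r)$ is $\gamma'$-stable at the common time $t_r$ and therefore equals some $N_j$. Since $t_1 < \cdots < t_m$, the nonzero terms of $\overline\pi(M_1), \ldots, \overline\pi(M_m)$ appear as a subsequence of $(N_j)$. If $\gamma$ is linear then $\aaa$ and $\bbb$ are constant vectors, hence so are the summed $\mathbf{a}'$ and $\mathbf{b}'$, making $\gamma'$ linear as well. The main delicate point is the coincidence of critical lines provided by $f'_{n'} - f'_0 = f_n - f_0$: without this the essential pair hypothesis would not obviously transfer to $\gamma'$, and deletion could indeed spawn the infinite family of new stable modules warned against in the remark preceding the lemma.
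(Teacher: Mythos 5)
Your proposal is correct and follows essentially the same route the paper intends: the paper leaves the lemma with a \qed{} and justifies it in the preceding paragraph by exactly your argument, namely that deleting wires preserves stability of surviving crossings via Theorem \ref{thm: stable iff blue above and red below}, while the essential-pair hypothesis together with the Finiteness Theorem \ref{converse of Finiteness Lemma} rules out infinitely many new stable modules. Your explicit construction of $\gamma'$ by summing the $a_i,b_i$ over the fibers of $\pi$, and the observation that the critical line is unchanged, are faithful elaborations of that sketch (modulo routine bookkeeping of the base point when $\alpha_0\in X$).
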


\begin{rem}\label{collapsing Skl to Skl}
It is an easy exercise to show that collapsing arrows in $X$ sends $\cS_{k\ell}$ to $\cS_{k\ell}$ (a different set with the same name) as long as $X$ is disjoint from the essential pair $(k,\ell)$.
\end{rem}



\subsection{Cluster-tilted $D_n$}

The analysis above also applies to the case when $b=0$. Then the quiver is a single oriented cycle of length $n=a$. Call this quiver $Q_n$. We impose the condition $rad^{n-1}=0$, i.e., the composition of any $n-1$ arrows is zero. The path algebra $KQ_n$ modulo $rad^{n-1}$ is the ``Jacobian algebra'' of a quiver with potential $\Lambda_n=J(Q_n,W)$. This is ``cluster-tilted'' of type $D_n$. For more details, please see the paper \cite{PartII} which was written to explain cluster-tilted algebras of Dynkin type and the problem of finding a maximal green sequence for these algebras.
	
The algebra $\Lambda_n$ has, up to isomorphism, $n(n-1)$ indecomposable modules $M_{ij}$ where $1\le i\le n$ and $i<j<i+n$. Given any green path $\gamma(t)=t\bbb(t)-\aaa(t)$ for $\Lambda_n$ we have associated functions $f_i:\RR\to\RR^n$ given, just as in the case of $\widetilde A_{a,b}$, by Equation \eqref{functions fi}. The critical line $t=c^0$ is defined as before. The Finiteness Lemma does not hold since there are only finitely many stable modules. However Proposition \ref{prop: regular stable modules} holds by definition and the 2-, 3- and 4- point lemmas all hold with the same proofs as before. 

Maximal green sequences of maximal length for $\Lambda_n$ will arise from the collapsing process described in the Deletion Lemma \ref{deletion lemma} using the following observation.

\begin{rem}\label{rem: stable set Sk}
Let $\Lambda=KQ$ be the algebra of type $\widetilde A_{n,1}^\vare$ with sign function $\vare_i=-$ if and only if $i$ is a multiple of $n+1$. Then $Q_n$ is obtained from $Q$ by collapsing the unique edge in $Q$ with negative sign. The collapsing map $\pi$ from the Deletion Lemma sends the set of $\Lambda$-modules $\cS_{k,n+1}$ to the set of $\Lambda_n$-modules
\[
	\cS_k:=\{M_{ij}\,:\, k\le i<j\le k+n, j-i<n\}
\]
which has $\binom n2+n-1$ elements. We observe that the $n$ sets $\cS_k$ are distinct.
\end{rem}

\begin{thm}\label{thm: upper bound for MGS of Dn}
Maximal green sequences for $\Lambda_n=KQ_n/rad^{n-1}$ have length at most
\[
	\binom n2+n-1
\]
and each such maximal green sequence has stable set of modules $\cS_k$ for some $k$.
\end{thm}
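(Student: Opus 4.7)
The plan is to mirror the proof of Theorem \ref{thm M1a}, simplified because $Q_n$ has all arrows oriented the same way. I fix a generic green path $\gamma$ for $\Lambda_n$ and let $t = c^0 = t^0_{0,n}$ denote its critical line; this is well-defined because the periodicity identity $f_{i+n}(t) - f_i(t) = f_{j+n}(t) - f_j(t)$ is independent of the relation $\operatorname{rad}^{n-1} = 0$. Since every arrow of $Q_n$ carries the same sign, all $n$ points on the critical line are positive. Let $k_0 \in \{1, \ldots, n\}$ index the lowest of these points, and use $n$-periodicity to choose the representative of every other residue class inside the open interval $(k_0, k_0 + n)$.

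The count proceeds by grouping stable modules according to the unordered pair of residue classes $\{a \bmod n,\, b \bmod n\}$ indexing $M_{ab}$. For the $\binom{n-1}{2}$ pairs $\{i, j\}$ with both $i, j \in (k_0, k_0 + n)$, the 3-Point Lemma (Lemma \ref{3pt lem}) applied with witness $k = k_0$ allows at most one stable module, namely the unique $M_{ij}$ whose two endpoints are the chosen representatives in $(k_0, k_0 + n)$. For the $n - 1$ pairs $\{k_0, b\}$, the 2-Point Lemma (Lemma \ref{2pt lem}) applied with $i = k_0$ (whose sign is $+$) allows at most two stable modules, corresponding to the two representatives of $b$ modulo $n$ in $(k_0 - n, k_0 + n)$; these are $M_{k_0, b}$ and $M_{b - n, k_0}$, the latter being the same $\Lambda_n$-module as $M_{b, k_0 + n}$ after shifting by $n$. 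Summing these bounds yields
\[
  \binom{n-1}{2} + 2(n-1) = \binom{n}{2} + n - 1,
\]
which is the claimed upper bound.

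To identify the stable set with $\cS_{k_0}$, note that each candidate module listed above lies in $\cS_{k_0} = \{M_{ij} : k_0 \le i < j \le k_0 + n,\ j - i < n\}$: the 3-Point Lemma candidates exhaust the interior piece $k_0 < i < j < k_0 + n$, and the 2-Point Lemma candidates $M_{k_0, b}$, $M_{b, k_0 + n}$ exhaust the two boundary pieces $i = k_0$ and $j = k_0 + n$. A direct count gives $|\cS_{k_0}| = \binom{n}{2} + n - 1$, so any MGS attaining the upper bound must have stable set precisely $\cS_{k_0}$. The main obstacle is the $n$-periodic bookkeeping around the 2-Point Lemma: one must carefully verify that its two output representatives correspond to genuinely distinct $\Lambda_n$-modules, both lying in $\cS_{k_0}$. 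Notably, no analog of the Finiteness Lemma is needed, because the relation $\operatorname{rad}^{n-1} = 0$ forces $j - i < n$ automatically (so Proposition \ref{prop: regular stable modules} holds by construction), and the module category of $\Lambda_n$ is already finite.
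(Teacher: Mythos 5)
Your proof is correct and follows essentially the same route as the paper: the paper's own (two-sentence) proof also takes $k$ to be the lowest point on the critical line and invokes the 2-point and 3-point Lemmas to confine all stable modules to $\cS_k$, with the count $|\cS_k|=\binom n2+n-1$ recorded in Remark \ref{rem: stable set Sk}. Your version merely makes the residue-class bookkeeping and the arithmetic $\binom{n-1}{2}+2(n-1)=\binom n2+n-1$ explicit, which is a faithful elaboration rather than a different argument.
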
 

\begin{proof}
Let $f_k(c^0)$ be the lowest point on the critical line. By the 2-point Lemma \ref{2pt lem} and the 3-point Lemma \ref{3pt lem}, the only possible stable modules are the elements of $\cS_k$.
\end{proof}

We give a central charge $Z:K_0\Lambda_n\to\CC$ which realizes each maximal stability set $\cS_k$. Together with Theorem \ref{thm: upper bound for MGS of Dn} above, this will prove Corollary L4 from the introduction. (See Figure \ref{fig: C(Z) under parabola} for the corresponding chord diagram.)

\begin{cor}\label{cor L4, proved}
For every $k$ there is a standard linear stability condition on $\Lambda_n$ with stable set of modules $\cS_k$.
\end{cor}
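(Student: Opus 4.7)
The plan is to realize $\cS_k$ as the stable set of an explicit standard linear stability condition $Z$ by arranging the dual vertices of the stability polygon on a periodic parabola, as suggested by Figure \ref{fig: C(Z) under parabola}. Take $\bbb = (1,\ldots,1)$ and choose the coordinates of $\aaa$ so that the heights $y_i := a_1 + \cdots + a_i$ are given on one fundamental domain by
\[
    y_i = (i - k - n/2)^2, \qquad k \le i \le k+n.
\]
Because $y_k = y_{k+n}$, we have $\sum_{i=1}^n a_i = 0$, so $y$ extends to an $n$-periodic function on $\ZZ$ whose graph consists of congruent strictly convex parabolic arcs joined at peaks of height $(n/2)^2$ above each $i = k + pn$.

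Next I would verify stability via the chord criterion. Every arrow of $Q_n$ points $i \to i+1$, so the lifted sign function has $\vare_\ell = -$ for every $\ell$. Passing to the universal cover and applying Theorem \ref{thm: stable iff blue above and red below} (equivalently Corollary \ref{cor: stability in terms of chords Vij}) to the linear green path $\lambda_Z(t) = t\bbb - \aaa$, the module $M_{ij}$ is $Z$-stable precisely when every intermediate $p_\ell$ with $i < \ell < j$ lies strictly below the chord $V_{ij}$. For any pair with $k \le i < j \le k+n$, the points $p_i, p_\ell, p_j$ all lie on the strictly convex arc $\{(t,(t-k-n/2)^2): k\le t \le k+n\}$, so each intermediate $p_\ell$ lies below $V_{ij}$ and $M_{ij} \in \cS_k$ is $Z$-stable.

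Then I would rule out every other $\Lambda_n$-module. Any $\Lambda_n$-module $M_{ij}$ (with $1 \le j-i < n$) admits, after shifting both indices by a suitable multiple of $n$, an index interval $[i,j]$ that straddles some period boundary $k+pn$. At $i = k+pn$ the height $y_{k+pn} = (n/2)^2$ is a strict local maximum of the periodic parabola, while $p_i, p_j$ lie strictly lower on the two adjacent parabolic arcs (since $0 < i - (k+(p-1)n), (k+pn)-j, \ldots$ are all at most $n-1$). Linear interpolation between two heights strictly less than $(n/2)^2$ stays strictly less than $(n/2)^2$, so $p_{k+pn}$ lies strictly above $V_{ij}$ and the stability criterion fails. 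Hence $\cS_k$ is precisely the $Z$-stable set; by Theorem \ref{thm: upper bound for MGS of Dn} this gives a maximal green sequence of the maximum length $\binom{n}{2} + n - 1$.

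The main obstacle is that the chord-diagram language of Section \ref{sec1} is developed for linear type-$A$ quivers, whereas $\Lambda_n$ is cyclic. My plan is to perform the translation via the universal cover: arrange the lifted dual vertices $p_i$ for $i \in \ZZ$ as above, and exploit that every indecomposable $\Lambda_n$-module $M_{ij}$ with $j-i<n$ is detected by a pair of vertices at finite distance in the cover, so that the chord criterion of Section \ref{sec1} applies verbatim and the convexity argument above is legitimate.
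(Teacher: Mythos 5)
Your overall strategy --- place the dual vertices on a periodic parabola with extrema at the indices $k+pn$, use strict convexity/concavity of each arc to make every chord within one period stable, and use the vertex at the period boundary as a witness against every straddling module --- is exactly the idea behind the paper's one-line construction, and your write-up supplies the verification the paper leaves to the reader. However, there is a sign error that, as written, makes the construction produce the wrong stable set. The quiver $Q_n$ is the case $b=0$ of $\widetilde A_{a,b}$, so \emph{all} of its arrows are positive in the paper's convention ($\vare_i=+$, arrows $i\leftarrow i+1$); see Remark \ref{rem: stable set Sk}, where $Q_n$ is obtained from $\widetilde A_{n,1}$ by collapsing its unique negative arrow. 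You assert instead that every $\vare_\ell=-$. With the correct signs, Corollary \ref{cor: stability in terms of chords Vij} requires every intermediate dual vertex $p_m$, $i<m<j$, to lie strictly \emph{above} the chord $V_{ij}$, so the heights $y_i$ must trace a \emph{concave-down} arc between consecutive low points at $i\equiv k\pmod n$. Your choice $y_i=(i-k-n/2)^2$ is convex up with peaks at $i\equiv k$; under the correct orientation every intermediate vertex then lies strictly below its chord, so every $M_{ij}\in\cS_k$ with $j-i\ge 2$ would be \emph{unstable}, and the straddling modules are no longer all excluded. This is precisely why the paper takes $c_j=-(2k+n-2j)^2=-4(j-k-n/2)^2$: the negative of (four times) your height function; compare Figure \ref{fig: C(Z) under parabola}, where the arc between the two low points is concave down.

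The repair is immediate: replace $y_i$ by $-(i-k-n/2)^2$ (equivalently, negate $\aaa$). Then your argument goes through verbatim with ``above'' and ``below'' exchanged: strict concavity of each arc puts all intermediate vertices strictly above every chord $V_{ij}$ with $k+pn\le i<j\le k+(p+1)n$ and $j-i<n$, so all of $\cS_k$ is stable (and no three vertices of a parabolic arc are collinear, so stability rather than mere semistability holds); and for any $M_{ij}$ not in $\cS_k$ the open interval $(i,j)$ contains some $k+pn$, whose height is a strict minimum of the periodic height function and hence lies strictly below $V_{ij}$, witnessing instability. Your remaining points --- that $y_{k+n}=y_k$ forces $\sum a_i=0$ so the configuration is periodic and normalized, that $\bbb=(1,\dots,1)$ makes $Z$ standard, and that the type-$A$ chord criterion applies on the universal cover (the paper already notes that the $2$-, $3$- and $4$-point lemmas carry over to $\Lambda_n$) --- are all correct.
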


\begin{proof}
Take $\bbb=(1,1,\cdots,1)$ and let $\aaa=(a_1,\cdots,a_n)$ be given by $a_j=c_j-c_{j-1}$ where $c_j=-(2k+n-2j)^2$.
\end{proof}


%
%

\setcounter{section}5

\section{Chord diagrams for $\widetilde A_{a,b}$}\label{sec6}\label{chord diagrams for tilde A}

In this section we use (periodic) chord diagrams to construct linear stability conditions having the maximum number of stable modules. We also construct piecewise linear green paths having any given maximal set $\cS_{k\ell}$ of stable modules as given in Definition \ref{def: Skell} completing the proof of Theorem M2.

\subsection{Periodic stablility polygon $\widetilde C(Z)$}\label{ss periodic polygon}

The first step is to show that, without loss of generality, we may assume that $m=0$ and $c$, the critical slope of $Z$, is also zero. We have already noted that the value of $m$ is irrelevant. Setting $m=0$ makes:
\[
	f_i(t)=a_1+\cdots+a_i-t(b_1+\cdots+b_i)
\]
We translate all wires $L_i$ to the left by $c$ units by replacing $f_i$ with
\[
	f_i^c(t)=f_i(t+c)=(a_1-cb_1)+\cdots+(a_i-cb_i)-t(b_1+\cdots+b_i).
\]
This corresponds to a new central charge $Z^c$ given by $Z^c(x)=(\aaa-c\bbb)\cdot x+i\bbb\cdot x$. Then $Z^c$ has the same stable modules as does $Z$ with the slopes of all modules decreased by $c$. Thus, without loss of generality, we may replace $Z$ with $Z^c$ and assume that $c=0$. Equivalently, 
\[
	a_1+\cdots+a_n=0.
\]
We say that $Z$ is {\bf normalized} if it has these two properties ($m=c=0$).

As in the finite $A_n$ case we define the {\bf periodic dual vertices} $p_i\in\RR^2$ for $i\in\NN$ by
\[
	p_i=(b_1+\cdots+b_i,a_1+\cdots+a_i)
\]
Since $c=0$, $p_{i+n}=p_i+(B,0)$ for all $i\in\NN$ where $B=b_1+\cdots+b_n$. For $i<0$ we can then define $p_i=p_{i+kn}-k(B,0)$ for sufficiently large $k$.

For all integers $i<j$ the {\bf chord} $V_{ij}$ is the line segment with endpoints $p_i$, $p_j$. As in the finite case (Theorem \ref{thm: stability of chords}) we have the following.

\begin{thm}\label{thm: Z-stability in terms of chords}
For $Z$ a normalized central charge on $K\widetilde A_{ab}^\vare$, a module $M_{ij}$ is $Z$-semistable if and only if the chord $V_{ij}$ has no negative dual vertices above it and no positive vertices below it. $M_{ij}$ is $Z$-stable if, in addition, $V_{ij}$ has no dual vertices in its interior.\qed
\end{thm}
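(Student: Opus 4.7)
The plan is to adapt the proof of Theorem \ref{thm: stability of chords} from the $A_n$ case to the periodic setting, using the normalization of $Z$ to maintain the key slope identity. First I would establish that $\mathrm{slope}(V_{ij}) = \sigma_Z(M_{ij})$ for every $i < j$. Since $Z$ is normalized, $p_{i+n} - p_i = (B, 0)$, so the $y$-coordinates of dual vertices are $n$-periodic; a direct computation of $p_j - p_i$ then yields $\sum_{k=i+1}^j a_k$ in the $y$-component and $\sum_{k=i+1}^j b_k$ in the $x$-component (indices read modulo $n$ in $\{1,\dots,n\}$), whose ratio is $\sigma_Z(M_{ij})$ via the push-down description of $\undim M_{ij}$ recalled in Section \ref{ss affine A}. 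Crucially, this identity is valid even when $j - i > n$, precisely because the $y$-direction has zero average.

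Next I would enumerate the proper indecomposable submodules of $M_{ij}$ by lifting to the universal cover (the infinite $n$-periodic linear quiver) and invoking the combinatorial submodule criterion from Section \ref{sec1}: these are $M_{ik}$ for $i < k < j$ with $\vare_k = +$; $M_{\ell j}$ for $i < \ell < j$ with $\vare_\ell = -$; and $M_{\ell k}$ for $i < \ell < k < j$ with $\vare_\ell = -$ and $\vare_k = +$.

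For the forward direction of the semistability equivalence I would argue that semistability forces $\sigma_Z(M_{ik}) \ge \sigma_Z(M_{ij})$ and $\sigma_Z(M_{\ell j}) \ge \sigma_Z(M_{ij})$; since the $x$-coordinate of $p_k$ (resp.\ $p_\ell$) lies strictly between those of $p_i$ and $p_j$, each slope inequality translates directly into the assertion that $p_k$ lies weakly above $V_{ij}$ (resp.\ $p_\ell$ lies weakly below $V_{ij}$). The reverse direction requires only the additional observation that if $p_\ell$ is weakly below and $p_k$ is weakly above $V_{ij}$ with $\ell < k$, then the chord $V_{\ell k}$ automatically has slope $\ge \sigma_Z(M_{ij})$, handling the mixed submodules $M_{\ell k}$. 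The $Z$-stability statement then follows by upgrading all weak inequalities to strict ones, which is exactly the condition that no dual vertex lies in the interior of $V_{ij}$.

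The main subtlety, and the point deserving the most care, is the bookkeeping of the periodic extension: the full collection of dual vertices $\{p_k : k \in \ZZ\}$ is infinite, so I must verify that only those $p_k$ with $x$-coordinate strictly between $(p_i)_x$ and $(p_j)_x$ correspond to proper indecomposable submodules of $M_{ij}$, while the periodic translates outside this $x$-range are correctly excluded from the geometric test. Once this is properly set up, the remainder of the argument is a mechanical periodic analogue of the $A_n$ proof.
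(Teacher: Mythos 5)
Your proof is correct and takes essentially the same approach the paper intends: the theorem is stated there with no written proof as the periodic analogue of Theorem \ref{thm: stability of chords}, and your adaptation --- the slope identity $\mathrm{slope}(V_{ij})=\sigma_Z(M_{ij})$, the enumeration of indecomposable submodules $M_{ik}$, $M_{\ell j}$, $M_{\ell k}$, and the check that the paper's vertical notion of ``above/below'' automatically restricts the geometric test to the dual vertices $p_k$ with $i<k<j$ --- is exactly the right way to fill it in. One minor quibble: the slope identity holds for any central charge, not just normalized ones; the normalization is only needed for the periodicity $p_{i+n}=p_i+(B,0)$ of the dual vertices, not for the stability criterion itself.
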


For example, $M_{i,i+2n}$ is never $Z$-stable since $p_{i+n}$ is in the interior of $ V_{i,i+2n}$.

In the following corollary it is essential that $Z$ be normalized. Then $p_i,p_{i+n}$ have the same height $y_i=y_{i+n}$ for all $i\in\ZZ$. So, at most $n$ distinct heights are attained.

\begin{cor}\label{cor: cannot have all pluses above all minuses}
If the height $y_k$ of every positive dual vertex $p_k$ is greater than the height $y_\ell$ of every negative dual vertex $p_\ell$ then there are infinitely many nonisomorphic $Z$-stable modules $M_{ij}$.
\end{cor}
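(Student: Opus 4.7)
The plan is to reduce the statement to the Finiteness Lemma (Lemma \ref{finiteness lemma}) applied to the linear green path $\gamma = \lambda_Z$ determined by the central charge $Z$.

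First I would identify the critical line. Because $Z$ is normalized, we have $m = 0$ and $c = a_1 + \cdots + a_n = 0$. Hence for the linear path $\gamma(t) = t\bbb - \aaa$, the wires $L_0$ and $L_n$ are straight lines whose intersection $t^0_{0,n} = (a_1 + \cdots + a_n)/(b_1 + \cdots + b_n) = 0$. Thus $c^0 = 0$ and the critical line is simply $t = 0$. Evaluating the wire functions there gives
\[
f_i(0) = a_1 + a_2 + \cdots + a_i = y_i,
\]
the second coordinate of the dual vertex $p_i$. Consequently the order $\triangleleft$ on wires along the critical line agrees with the vertical order on dual vertices. In particular, the hypothesis that every positive $p_k$ is higher than every negative $p_\ell$ translates exactly into the statement that $k \triangleright \ell$ whenever $\vare_k = +$ and $\vare_\ell = -$, which is the hypothesis of Lemma \ref{finiteness lemma}.

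Next I would invoke Lemma \ref{finiteness lemma} for the green path $\gamma = \lambda_Z$. The lemma produces an infinite sequence of pairwise nonisomorphic $\gamma$-stable modules $M_{ij}$. Each of these is $Z$-stable by Proposition \ref{prop: relation between Z and D(M)}, which states that $M$ is $Z$-stable iff $\lambda_Z$ meets the interior of the wall $D(M)$. This yields infinitely many nonisomorphic $Z$-stable modules, as desired.

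I do not anticipate any serious obstacle. The only subtle point is the identification $c^0 = 0$, which relies on normalization; once that is in place, $f_i(c^0) = y_i$ is immediate and the corollary is a direct translation of the Finiteness Lemma from the wire-diagram language to the chord-diagram language.
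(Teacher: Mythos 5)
Your proposal is correct, but it takes a different route from the paper. The paper's own proof is a direct two-line construction in the chord-diagram language: take $p_k$ a positive dual vertex of \emph{minimum} height and $p_\ell$ a negative dual vertex of \emph{maximum} height; then every interior point of each chord $V_{k,\ell+sn}$ has $y$-coordinate strictly between $y_\ell$ and $y_k$, so no positive vertex lies below it, no negative vertex lies above it, and no dual vertex lies in its interior. Theorem \ref{thm: Z-stability in terms of chords} then makes the explicit infinite family $M_{k,\ell+sn}$, $s\in\ZZ$, $Z$-stable. You instead translate the hypothesis into the critical-line language ($c^0=0$ and $f_i(c^0)=y_i$ under normalization, so the vertical order of dual vertices is the order $\triangleleft$) and invoke the Finiteness Lemma \ref{finiteness lemma}; this is a valid reduction and your dictionary is exactly right. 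What the paper's argument buys is self-containedness and an explicit family of stable modules all sharing the index $k$; what yours buys is economy, since the work is delegated to Section \ref{sec5}. One small point you should acknowledge: the Finiteness Lemma is stated under the standing assumption of Section \ref{sec5} that $\gamma$ is a \emph{generic} green path, and $\lambda_Z$ need not be generic. This is harmless here --- for a linear path the wires are straight lines, so each $t_{ij}$ is automatically unique, and the lemma's proof (with its tie-breaking rule) produces strict inequalities and hence genuinely stable, not merely semistable, modules --- but a careful write-up should say so rather than cite the lemma silently outside its stated hypotheses.
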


\begin{proof}
Let $p_k$ be a positive dual vertex with minimum height $y_k$. Let $p_\ell$ be a negative dual vertex of maximum height $y_\ell$. Then $y_k>y_\ell=y_{\ell+sn}$ for all integers $s$. Then any interior point in any chord of the form $V_{k,\ell+sn}$ has $y$-coordinate greater than $y_\ell$ and less than $y_k$. By Theorem \ref{thm: Z-stability in terms of chords} the corresponding modules $M_{k,\ell+sn}$ are $Z$-stable for all integers $s$.
\end{proof}

In analogy with the finite case, the {\bf periodic stability polygon} $\widetilde C(Z)$ is defined to be the intersection 
\[
	\widetilde C(Z)=\widetilde C^+(Z)\cap\widetilde C^-(Z)
\]
where $\widetilde C^+(Z),\widetilde C^-(Z)$ are defined as follows. Let $p_{k_i}$ be the positive dual vertices with fixed indexing $k_i, i\in\ZZ$. For every pair of consecutive elements in this set, $p_{k_{i-1}},p_{k_i}$, let $\widetilde C_i^+(Z)$ be the union of the convex hull of the points $p_j$ for $k_{i-1}\le j\le k_i$ with the set of all points below this convex hull, i.e., points with $x$-coordinate equal and $y$-coordinate less than a point in this convex hull. Define $\widetilde C^+(Z)$ to be the union of these sets $\widetilde C_i^+(Z)$. Similarly, $\widetilde C^-(Z)$ is the union of sets $\widetilde C_j^-(Z)$ which are defined to be the convex hull of all dual vertices $p_i$ between and including two consecutive negative vertices union the set of all points above this convex hull. 

Theorem \ref{thm: Z-stability in terms of chords} is equivalent to the following affine analogue of Theorem \ref{thm: Mij Z-semistable iff Vij in C(Z)}.

\begin{thm}\label{thm: Z-stability using periodic stability polygon}
For a normalized central charge $Z$ on $K\widetilde A_{ab}^\vare$, the string module $M_{ij}$ is $Z$-semistable if and only if $V_{ij}\subset \widetilde C(Z)$. $M_{ij}$ is $Z$-stable if, in addition, the chord $V_{ij}$ contains no dual vertex $p_k$ in its interior.\qed
\end{thm}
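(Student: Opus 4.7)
The plan is to reduce this affine theorem to Theorem \ref{thm: Z-stability in terms of chords} by establishing periodic analogues of Lemmas \ref{lem: property of C+(Z)} and \ref{lem: property of C-(Z)}. Specifically, I would prove the following two statements:

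(A+) For $i<j$, the chord $V_{ij}\subset \widetilde C^+(Z)$ if and only if every positive dual vertex $p_k$ with $i<k<j$ lies on or above $V_{ij}$.

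(A--) For $i<j$, the chord $V_{ij}\subset \widetilde C^-(Z)$ if and only if every negative dual vertex $p_\ell$ with $i<\ell<j$ lies on or below $V_{ij}$.

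The proofs of (A+) and (A--) are essentially verbatim the proofs of Lemmas \ref{lem: property of C+(Z)} and \ref{lem: property of C-(Z)}. This works because $\widetilde C^+(Z)$ is built piecewise from the convex hulls $\widetilde C_i^+(Z)$ associated to consecutive positive dual vertices $p_{k_{i-1}}, p_{k_i}$, together with all points lying below each hull --- exactly the same construction as in the finite case, but with the indexing running over $\ZZ$. For the forward direction of (A+), observe that $\widetilde C^+(Z)$ contains no point lying strictly above any positive dual vertex $p_k$; hence if some $p_k$ with $i<k<j$ were strictly below $V_{ij}$, then the point of $V_{ij}$ directly above $p_k$ would escape $\widetilde C^+(Z)$. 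For the converse, let $L$ be the piecewise linear curve from $p_i$ to $p_j$ that bends at and only at the positive dual vertices $p_k$ with $i<k<j$; then $L \subset \widetilde C^+(Z)$ by construction, and since each such $p_k$ lies on or above $V_{ij}$, the entire curve $L$ lies on or above $V_{ij}$, so $V_{ij}$ lies on or below $L$ and hence inside $\widetilde C^+(Z)$. The argument for (A--) is symmetric.

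Once (A+) and (A--) are in hand, the theorem follows immediately by combining them with Theorem \ref{thm: Z-stability in terms of chords}: the chord $V_{ij}$ lies in $\widetilde C(Z) = \widetilde C^+(Z) \cap \widetilde C^-(Z)$ if and only if every positive dual vertex $p_k$ with $i<k<j$ is on or above $V_{ij}$ and every negative dual vertex $p_\ell$ with $i<\ell<j$ is on or below $V_{ij}$, which is exactly the $Z$-semistability criterion in Theorem \ref{thm: Z-stability in terms of chords}. The stable case follows by adding the condition that no dual vertex lies in the interior of $V_{ij}$.

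The only subtle point --- and the main thing to verify carefully --- is that the phrase ``between $p_i$ and $p_j$'' is unambiguous in the periodic setting. Since all coordinates of $\bbb$ are positive, the map $k \mapsto x_k = b_1 + \cdots + b_k$ is strictly increasing on $\ZZ$, so the set of dual vertices with $x$-coordinate strictly between $x_i$ and $x_j$ is exactly $\{p_k : i<k<j\}$, and the indexing of positive vertices $\{p_{k_r}\}_{r\in\ZZ}$ partitioning the real line into consecutive pieces $\widetilde C^+_r(Z)$ works without modification. Because $\gamma$ is generic we may assume no two dual vertices coincide, so the piecewise construction of $\widetilde C^\pm(Z)$ is well-defined and the case analysis in (A+), (A--) is identical to the finite case.
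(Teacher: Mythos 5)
Your proposal is correct and follows essentially the same route as the paper: the paper states Theorem \ref{thm: Z-stability using periodic stability polygon} as the immediate affine analogue of Theorem \ref{thm: Mij Z-semistable iff Vij in C(Z)}, obtained by combining the chord criterion of Theorem \ref{thm: Z-stability in terms of chords} with periodic versions of Lemmas \ref{lem: property of C+(Z)} and \ref{lem: property of C-(Z)}, which is exactly the reduction you carry out (and you correctly note that the only point needing care is that ``between $p_i$ and $p_j$'' is unambiguous because $k\mapsto b_1+\cdots+b_k$ is strictly increasing). Your write-up simply makes explicit the details the paper leaves to the reader.
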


\subsection{Linearity of maximal stable sets}

\begin{thm}\label{thm: existence of Z} Suppose $k<\ell<k+n$. Then, a sufficient condition for the stable set $\cS_{k\ell}$ to be linear is if, in the sequence of signs $\vare_j$ for $k<j<\ell$, all negative signs come before all positive signs. Another sufficient condition is if, in the sequence of signs $\vare_j$ for $\ell<j<k+n$, all positive signs come before all negative signs.
\end{thm}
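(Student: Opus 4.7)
The strategy is to construct, under each sign hypothesis, an explicit normalized central charge $Z$ whose periodic stability polygon $\widetilde C(Z)$ certifies that every module in $\cS_{k\ell}$ is $Z$-stable. Combined with Theorem \ref{thm M2a}, which bounds the stable set above by $\cS_{k\ell}$, this forces the $Z$-stable set to equal $\cS_{k\ell}$, establishing linearity. By Theorem \ref{thm: Z-stability using periodic stability polygon}, we may work directly at the level of the dual vertices $p_i = (x_i, y_i)$, with $x_i = b_1 + \cdots + b_i$ strictly increasing and $y_i = a_1 + \cdots + a_i$ periodic of period $n$ after the normalization $\sum a_i = 0$.

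For Condition 1, after a cyclic relabeling assume $k = 0$. Pin $y_0 = 0$ and $y_\ell = \delta$ for some small $\delta > 0$, take uniform $x$-spacing $b_i = 1$, and assign large positive heights to the positive indices $j \in (0,n) \setminus \{\ell\}$ and large negative heights to the negative indices $j \in (0,n) \setminus \{0\}$, in each case lying on a strictly convex arc. This gives an upper convex envelope through the positives (realizing $\widetilde C^+(Z)$) and a lower convex envelope through the negatives (realizing $\widetilde C^-(Z)$). The role of Condition 1 is that the pattern of $r$ negatives followed by $s$ positives on the arc $(0, \ell)$ allows the critical-line order of \eqref{max pattern of points on critical line} to be realized with $x$-coordinates in their forced increasing order: the negatives that must lie below $y_0 = 0$ occupy the positions $1, \ldots, r$ adjacent to $p_0$, and the positives that must lie above $y_\ell = \delta$ occupy the positions $r+1, \ldots, \ell-1$ adjacent to $p_\ell$. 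On the remaining arc $(\ell, n)$ we use any height assignment compatible with the two convex envelopes.

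The verification splits into the four types of Definition \ref{def: Skell}. Types (1) and (2) are chords between two vertices both supporting the upper convex envelope (respectively both in the lower), so they lie in $\widetilde C^+(Z)$ (respectively $\widetilde C^-(Z)$) and satisfy the polygon conditions by convexity. The essential cases are (3) and (4), the mixed chords and their periodic translates $V_{i, j-n}$. For a chord $V_{ij}$ with $i \in B_{k\ell}$ negative at low height and $j \in A_{k\ell}$ positive at high height, one must show that every intermediate positive vertex lies above the chord and every intermediate negative vertex below it. Under Condition 1, any intermediate index in $(0, \ell)$ is either a negative at small $x$ (well below the chord as it rises from $y_i$) or a positive at large $x$ (well above the chord as it approaches $y_j$), consistent with the chord's slope; outside this arc the convexity of the envelopes handles the remaining intermediate vertices.

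Condition 2 is handled by the mirror construction, with the monotone arc being $(\ell, k+n)$ rather than $(k, \ell)$. The main obstacle is the case analysis for the mixed chord types (3) and (4) and their translates under the periodicity $p_{j-n} = p_j - (B,0)$; each reduces to a slope inequality among the dual vertices, and the monotone sign hypothesis of Condition 1 (resp.\ Condition 2) is exactly what makes the needed inequalities simultaneously consistent. An interleaved sign pattern on the relevant arc would produce a positive vertex at an $x$-position where it must lie above the chord but cannot, or a negative vertex where the dual constraint fails.
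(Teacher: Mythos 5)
Your overall strategy is the same as the paper's: build one explicit periodic chord diagram (normalized central charge) whose stability polygon contains every chord of $\cS_{k\ell}$, then invoke the upper-bound results to conclude the stable set is exactly $\cS_{k\ell}$. The problem is that your explicit construction does not work, and the failure occurs precisely at the step you wave through. Taking \textbf{uniform $x$-spacing $b_i=1$} is fatal. Consider $\widetilde A_{3,3}$ with $\vare=(-,+,-,-,+,+)$ and the essential pair $k=6$, $\ell=9\equiv 3$, which satisfies your Condition 1 (the signs on $(6,9)$ are $-,+$). Here $B_{k\ell}=\{4,6,7\}$, $A_{k\ell}=\{8,9,11\}$, and the doubly paired modules coming from $(i,j)=(4,8)$ are $M_{48}$ (type (3)) and $M_{24}$ (type (4)). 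Semistability of $M_{48}$ requires $p_6$ on or above $V_{48}$, and semistability of $M_{24}$ requires $p_3$ on or below $V_{24}$. With $x_i=i$ the point $x_6$ is the midpoint of $[x_4,x_8]$ and $x_3$ is the midpoint of $[x_2,x_4]$, so both conditions read against the same number $(y_2+y_4)/2$, forcing $y_3\le (y_2+y_4)/2\le y_6$. But the critical-line pattern \eqref{max pattern of points on critical line}, which Theorem \ref{thm M1a} shows is \emph{forced} for any stability condition realizing $\cS_{69}$, demands $y_6<y_9=y_3$. No choice of heights can resolve this; the contradiction is independent of $\delta$ and of how "large" you make the positive and negative heights. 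The only escape is through the $b_i$, i.e., non-uniform horizontal spacing, which is exactly what the paper's explicit coordinates in Figure \ref{Fig: good chord diagram} provide ($b_1=5$ against $b_i\in\{1,2\}$ elsewhere).

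More broadly, the sentence ``each reduces to a slope inequality among the dual vertices, and the monotone sign hypothesis \dots is exactly what makes the needed inequalities simultaneously consistent'' is an assertion of the theorem, not a proof of it. The whole content of this theorem (in view of Theorem \ref{thm: nonexistence of Z}, whose proof shows these same paired inequalities \emph{cannot} be satisfied for $+++---$) lies in verifying that the type (3) and type (4) chords are simultaneously stable, and that verification genuinely depends on choosing the $x$-coordinates, not just the heights. Your treatment of types (1) and (2) by convexity is fine, and your framing via Theorem \ref{thm: Z-stability using periodic stability polygon} plus the upper bound is the right skeleton (you should also note finiteness of the stable set via the essential pair $y_k<y_\ell$), but the construction must be repaired: either reproduce explicit coordinates as in the paper's proof, or derive the inequalities of the form $\frac{x_k-x_i}{x_j-x_i}+\frac{x_\ell-x_j}{x_i+B-x_j}<1$ that the doubly paired chords impose and show a choice of $b_i$ satisfying all of them exists under Condition 1 (resp.\ Condition 2).
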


\begin{rem}
Theorem \ref{thm: nonexistence of Z} below shows that if neither of these conditions holds, the stable set $\cS_{k\ell}$ is nonlinear. Thus, these conditions, taken together, are necessary and sufficient.
\end{rem}

\begin{proof}
We prove the first statement. The second is analogous. The proof is given by drawing one periodic stability polygon (Figure \ref{Fig: good chord diagram}) which will generate all examples.
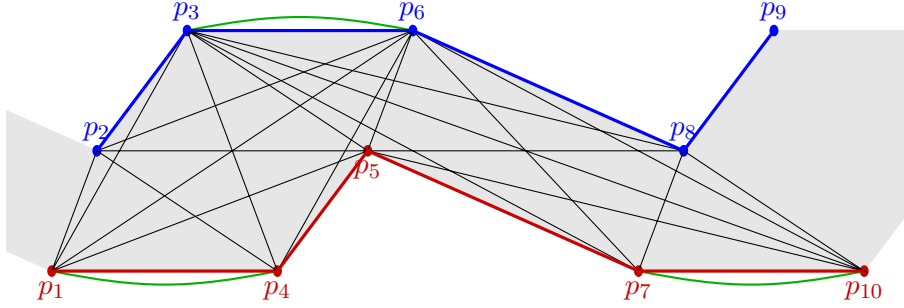
\begin{figure}[htbp]
\begin{center}
\begin{tikzpicture}[xscale=.6,yscale=.8]
\clip (-2,-3) rectangle (18,2.5);
\coordinate (P1a) at (1,-2.3); 
\coordinate (P1b) at (2,-2.3); 
\coordinate (P7a) at (14,-2.3); 
\coordinate (P7b) at (15,-2.3); 
\coordinate (P7) at (12,-2);
\coordinate (P1) at (-1,-2); 
\coordinate (P2) at (0,0);
\coordinate (P8) at (13,0);
\coordinate (P3) at (2,2);
\coordinate (P3a) at (4,2.3);
\coordinate (P3b) at (5,2.3);
\coordinate (P9) at (15,2);
\coordinate (P4) at (4,-2);
\coordinate (P10) at (17,-2);
\coordinate (P5) at (6,0);
\coordinate (P6) at (7,2);
\coordinate (P11) at (11,1.2);
\draw[color=gray!20!white,fill] (P1)--(P2)--(P3)--(P6)--(P5)--(P4)--(P1) (P5)--(P7)--(P8)--(P6)--(P5);
\begin{scope}[xshift=13cm]
\draw[color=gray!20!white,fill] (-1,-2)--(0,0)--(2,2)--(7,2)--(6,0)--(4,-2)--(-1,-2) ;
\end{scope}
\begin{scope}[xshift=-13cm]
\draw[color=gray!20!white,fill]  (6,0)--(12,-2)--(13,0)--(7,2)--(6,0);
\end{scope}
\draw[color=red!80!black,fill] (P1) circle[radius=2.5pt] node[below]{$p_1$};
\draw[color=red!80!black,fill] (P4) circle[radius=2.5pt] node[below]{$p_4$};
\draw[color=red!80!black,fill] (P5) circle[radius=2.5pt] node[below]{$p_5$};
\draw[color=red!80!black,fill] (P7) circle[radius=2.5pt]node[below]{$p_7$};
\draw[color=red!80!black,fill] (P10) circle[radius=2.5pt]node[below]{$p_{10}$};
\draw[color=blue,fill] (P2) circle[radius=2.5pt] node[above]{$p_2$};
\draw[color=blue,fill] (P3) circle[radius=2.5pt] node[above]{$p_3$};
\draw[color=blue,fill] (P6) circle[radius=2.5pt] node[above]{$p_6$};
\draw[color=blue,fill] (P8) circle[radius=2.5pt] node[above]{$p_8$};
\draw[color=blue,fill] (P9) circle[radius=2.5pt] node[above]{$p_9$};
\draw (P1)--(P5)--(P2)--(P1)--(P3)--(P4) (P1)--(P6);
\draw (P6)--(P3)--(P5)--(P6)--(P7) (P6)--(P4)--(P2)--(P6);
\draw (P5)--(P8)--(P10)--(P3)--(P7)--(P8)--(P3) (P5)--(P10)--(P6);
\draw[thick,color=green!70!black] (P1)..controls (P1a) and (P1b)..(P4);
\draw[thick,color=green!70!black] (P7)..controls (P7a) and (P7b)..(P10);
\draw[thick,color=green!70!black] (P3)..controls (P3a) and (P3b)..(P6);
\draw[very thick,color=blue] (P2)--(P3)--(P6)--(P8)--(P9);
\draw[very thick,color=red!80!black] (P1)--(P4)--(P5)--(P7)--(P10);
\end{tikzpicture}
\caption{This is a periodic chord diagram for $\widetilde A_{33}^{-++--+}$ with all 24 chords of $\cS_{25}$ being within the polygon and thus stable. Additional vertices can be placed along the green curves.}
\label{Fig: good chord diagram}
\end{center}
\end{figure}
The points $p_1,\cdots,p_6$ have coordinates $(-1,-2), (0,0), (2,2), (4,\vare-2), (6,\vare), (7,2+\vare), (12,-2)$, etc with $p_{i+6}=p_i+(13,0)$. Then $V_{12},V_{45}$ have slope 2, $V_{23}, V_{45}$ have slope 1, $V_{24},V_{35}$ have slope approximately $-1/2$ and $V_{57},V_{68}$ have slope approximately $-1/3$. In the drawing $\vare=0$. But any positive $\vare<2/5$ will make all 24 chords of $\cS_{25}$ stable.

To draw more general cases, we should insert vertices along the chords $V_{14}$ and $V_{36}$ along a curve (in green in Figure \ref{Fig: good chord diagram}) which is slightly concave up along $V_{14}$ and slightly concave down along $V_{36}$. As long as all the positive points between $\ell$ and $k+n$ come before all the negative points, such a figure will be accurate and make all chords in $\cS_{k\ell}$ stable.
\end{proof}

\begin{thm}\label{thm: nonexistence of Z}
The stable set of modules $\cS_{k\ell}$ is nonlinear if and only if there exist $k<k'<\ell'<\ell<\ell''<k''<k+n$ so that $\vare_{k'},\vare_{k''}$ are positive and $\vare_{\ell'},\vare_{\ell''}$ are negative.
\end{thm}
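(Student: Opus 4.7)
My plan is to prove the two implications separately. For ($\Rightarrow$), I argue by contrapositive: if no such quadruple exists, then either all negative signs precede all positive signs in $(k,\ell)$, or all positive signs precede all negative signs in $(\ell,k+n)$, so one of the two sufficient conditions of Theorem \ref{thm: existence of Z} is satisfied and $\cS_{k\ell}$ is linear.

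The substance lies in ($\Leftarrow$). Assume the quadruple $k',\ell',\ell'',k''$ exists and suppose for contradiction that some normalized linear central charge $Z$ realizes $\cS_{k\ell}$. Write $y_i$ for the ($n$-periodic) height of the dual vertex $p_i$ on the critical line $t=0$. By the critical-line pattern \eqref{max pattern of points on critical line} governing any maximum-length MGS (Theorem \ref{thm M2a}), the heights obey
\[
y_{k'},\ y_{k''}\ >\ y_\ell\ >\ y_k\ >\ y_{\ell'},\ y_{\ell''}.
\]
The contradiction will come from two pairs of stable chords in $\widetilde C(Z)$ drawn from Definition \ref{def: Skell}. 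For $(i,j)=(\ell''-n,\,k')$ the case~(3) chord $V_{\ell''-n,\,k'}$ and the case~(4) chord $V_{k'-n,\,\ell''-n}$ are both stable; by periodicity of $\widetilde C(Z)$ I shall work with the latter in its translated form $V_{k',\,\ell''}$. The positive vertex $p_k$ is intermediate on $V_{\ell''-n,\,k'}$ at $x=k$, forcing $y_k$ above the linear interpolant; the negative vertex $p_\ell$ is intermediate on $V_{k',\,\ell''}$ at $x=\ell$, forcing $y_\ell$ below its interpolant. Writing the six consecutive gaps
\[
a_1=k'-k,\ a_2=\ell'-k',\ a_3=\ell-\ell',\ a_4=\ell''-\ell,\ a_5=k''-\ell'',\ a_6=n+k-k''
\]
(so $\sum a_i = n$), and combining the two interpolant inequalities with $y_\ell>y_k$, the constraint collapses algebraically to
\[
(\mathrm{I})\qquad a_1 a_4\ >\ (a_2+a_3)(a_5+a_6).
\]
The mirror argument applied to $(i,j)=(\ell',\,k'')$, using the case~(3) chord $V_{\ell',\,k''}$ and the case~(4) chord $V_{k''-n,\,\ell'}$, yields by the symmetric calculation
\[
(\mathrm{II})\qquad a_3 a_6\ >\ (a_1+a_2)(a_4+a_5).
\]

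The main obstacle --- and the climax of the argument --- is to show (I) and (II) cannot simultaneously hold. Expanding both and adding them, the terms $a_1 a_4$ and $a_3 a_6$ cancel between the two sides, leaving
\[
0\ >\ a_5(a_1+a_2+a_3)\ +\ a_2(a_4+a_5+a_6),
\]
a strictly positive expression since every $a_i$ is a positive integer. This contradicts the hypothesis that $Z$ realizes $\cS_{k\ell}$, so $\cS_{k\ell}$ is nonlinear. The delicate point is that neither (I) nor (II) alone forces a contradiction for every choice of gaps --- depending on the $a_i$ either can hold in isolation --- and the success of the method rests on the symmetry of the periodic polygon that pairs the case~(3)/case~(4) chords from $(\ell''-n,\,k')$ with those from $(\ell',\,k'')$. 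A routine secondary check is that other (non-special) intermediate vertices on these four chords impose additional necessary conditions but do not interfere with the two extracted at $p_k$ and $p_\ell$.
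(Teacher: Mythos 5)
Your proposal is correct, and it uses the same essential mechanism as the paper --- the four stable chords $V_{\ell''-n,k'}$, $V_{k''-n,\ell'}$ (case (4), translated) and $V_{k',\ell''}$, $V_{\ell',k''}$ (case (3)) squeeze $p_k$ from below and $p_\ell$ from above in a way incompatible with $y_k<y_\ell$; in the paper's reduced coordinates these are exactly $V_{03},V_{14},V_{36},V_{47}$. Where you genuinely diverge is in the endgame. The paper first collapses to $\widetilde A_{3,3}^{+++---}$ via the Deletion Lemma and then argues geometrically, pairing the chords as $\{V_{03},V_{14}\}$ and $\{V_{36},V_{47}\}$ and deforming the polygon to show the intersection heights satisfy $t_2<s_2$. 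You instead pair each case-(3) chord with its case-(4) partner, extract from each pair a single homogeneous inequality in the six gaps, and observe that the sum of the two inequalities is $0>(\text{positive})$. This buys you a proof that works uniformly on the general quiver without the reduction step, and replaces the somewhat delicate ``move the vertices'' argument with a two-line algebraic identity; I verified that (I) and (II) do follow from the stated chord constraints together with $y_{k'},y_{k''}>y_\ell>y_k>y_{\ell'},y_{\ell''}$ (which is legitimately forced by Theorems \ref{thm M1a}, \ref{thm M2a} and Lemma \ref{lem: Skl are distinct}, since the hypothesis gives $a,b\ge3$), and that the cancellation leaves exactly $a_5(a_1+a_2+a_3)+a_2(a_4+a_5+a_6)>0$. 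One small correction: your gaps must be measured in the $x$-coordinates of the dual vertices, i.e.\ $a_1=b_{k+1}+\cdots+b_{k'}$ and so on, not the integer differences $k'-k$, since $\bbb$ need not be standard; this changes nothing in the argument because your final contradiction holds for arbitrary positive reals, but as written the definitions are only valid for standard $\bbb$ (and the symbol $a_i$ collides with the coordinates of $\aaa$ used throughout the paper).
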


\begin{proof}
If $k',k'',\ell',\ell''$ do not all exist then the stable set $\cS_{k\ell}$ is linear by Theorem \ref{thm: existence of Z}. Therefore, it suffices to show that, when they do exist, $\cS_{k\ell}$ is nonlinear. 

By the Deletion Lemma \ref{deletion lemma}, we may delete (collapse) all the other edges of the quiver and assume that the quiver is $\widetilde A_{33}^{+++---}$ with $k=2,\ell=5$. Consider the periodic stability polygon $\widetilde C(Z)$ with vertices $p_i$. If we write $p_i=(x_i,y_i)$ then
\[
	y_i=y_{i+6m}=a_1+\cdots+a_i=f_i(0).
\]
By construction of the maximal stability set $\cS_{k\ell}=\cS_{25}$ we therefore have:
\begin{equation}\label{eq: height pattern}
	{\color{red}y_0,y_4}<{\color{blue}y_2}<{\color{red}y_5}<{\color{blue}y_1,y_3}
\end{equation}
But $\cS_{25}$ contains $M_{03},M_{14},M_{36},M_{47}$. Since these are stable, the dual vertex $p_2$ must be above the chords $V_{03},V_{14}$. So, $y_2$ must be greater than the $y$-coordinate of $(s_1,s_2)=V_{03}\cap V_{14}$. Similarly, the $y$-coordinate of $(t_1,t_2)=V_{36}\cap V_{47}$ must be greater than $y_5$. So,
\[
	s_2<y_2<y_5<t_2.
\]
However, examination of Figure \ref{Fig: periodic stability polygon for A6} shows that $t_2<s_2$ which gives a contradiction. The proof is: move $p_1$ (and $p_7$) left until it is right above $p_0$ (and $p_7$ above $p_6$). Then move $p_3=(x_3,y_3)$ right until $x_3=x_4$. Both moves decreases $s_2$ and increase $t_2$. After the moves we have $s_2=t_2$. So, the original values must be related by $s_2>t_2$. This is a contradiction showing that $\cS_{25}$ is not the stability set corresponding to the height pattern \eqref{eq: height pattern}. But the analysis of the critical line shows that this height pattern is the only one which can give this stability set. Therefore $\cS_{25}$ is not the stability set of any central charge.
\end{proof}

\begin{figure}[htbp]
\begin{center}
\begin{tikzpicture}[yscale=.8]
\coordinate (P0) at (0,0.5);
\coordinate (P6) at (6,0.5);
\coordinate (L03) at (.4,1.5);
\coordinate (P1) at (0.4,3.8);
\coordinate (P7) at (6.4,3.8);
\coordinate (L14) at (2.9,1.5);
\coordinate (S) at (1.7,2.35);
\coordinate (P2) at (1.9,3.3);
\coordinate (P8) at (7.9,3.3);
\coordinate (P3) at (3.3,4.2);
\coordinate (P9) at (9.3,4.2);
\coordinate (L36) at (4.4,3.3);
\coordinate (P4) at (3.6,0);
\coordinate (P10) at (9.6,0);
\coordinate (L47) at (6.2,2.8);
\coordinate (T) at (5.1,1.9);
\coordinate (P5) at (5,1.2);
\coordinate (P11) at (11,1.2);
\draw (S)node[right]{$(s_1,s_2)$};
\draw (T)node[right]{$(t_1,t_2)$};
\draw[color=red!80!black,fill] (P0) circle[radius=2.5pt] node[below]{$p_0$};
\draw[color=red!80!black,fill] (P4) circle[radius=2.5pt] node[below]{$p_4$};
\draw[color=red!80!black,fill] (P5) circle[radius=2.5pt] node[below]{$p_5$};
\draw[color=red!80!black,fill] (P6) circle[radius=2.5pt]node[below]{$p_6$};
\draw[color=red!80!black,fill] (P10) circle[radius=2.5pt]node[below]{$p_{10}$};
\draw[color=blue,fill] (P1) circle[radius=2.5pt] node[above]{$p_1$};
\draw[color=blue,fill] (P2) circle[radius=2.5pt] node[above]{$p_2$};
\draw[color=blue,fill] (P3) circle[radius=2.5pt] node[above]{$p_3$};
\draw[color=blue,fill] (P7) circle[radius=2.5pt] node[above]{$p_7$};
\draw[color=blue,fill] (P8) circle[radius=2.5pt] node[above]{$p_8$};
\draw[color=blue,fill] (P9) circle[radius=2.5pt] node[above]{$p_9$};
\draw (L03) node{$V_{03}$};
\draw (L14) node{$V_{14}$};
\draw (L36) node{$V_{36}$};
\draw (L47) node{$V_{47}$};
\draw (P0)--(P3) (P1)--(P4);
\draw (P6)--(P3) (P7)--(P4);
\draw[very thick,color=blue] (P1)--(P2)--(P3)--(P7)--(P8)--(P9);
\draw[very thick,color=red!80!black] (P0)--(P4)--(P5)--(P6)--(P10);
\end{tikzpicture}
\caption{When $M_{03},M_{14}$ are stable, $p_2$ is higher than $(s_1,s_2)=V_{03}\cap V_{14}$. When $M_{36},M_{47}$ are stable, $p_5$ is lower than $(t_1,t_2)=V_{36}\cap V_{47}$. But $t_2<s_2$, contradicting the assumption that $p_2$ is lower than $p_5$.}
\label{Fig: periodic stability polygon for A6}
\end{center}
\end{figure}
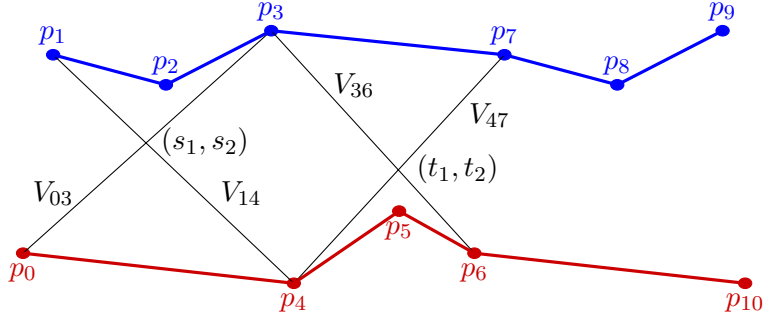

\begin{cor}
If either $a$ or $b$ is $\le 2$, all maximal stable sets of modules are linear.\qed
\end{cor}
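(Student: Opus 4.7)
The plan is to read this off directly from the characterization in Theorem \ref{thm: nonexistence of Z}. That theorem says $\cS_{k\ell}$ is nonlinear if and only if there exist indices $k<k'<\ell'<\ell<\ell''<k''<k+n$ with $\vare_{k'}=\vare_{k''}=+$ and $\vare_{\ell'}=\vare_{\ell''}=-$. Since the definition of $\cS_{k\ell}$ already requires $\vare_k=+$ and $\vare_\ell=-$, what I need to show is simply that when $a\le 2$ or $b\le 2$, such $k',k'',\ell',\ell''$ cannot all be produced at the same time.

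The key observation is a counting argument. The quiver $\widetilde A_{a,b}^\vare$ has exactly $a$ positive and $b$ negative signs in each length-$n$ period. Fix an admissible essential pair $(k,\ell)$ with $k<\ell<k+n$; then the cyclic interval of indices $\{k+1,k+2,\ldots,k+n-1\}$ contains $a-1$ positive signs (the positive signs other than $\vare_k$) and $b$ negative signs (including $\vare_\ell$). Removing $\ell$, the set $\{k+1,\ldots,k+n-1\}\setminus\{\ell\}$ contains exactly $a-1$ positive and $b-1$ negative indices.

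Now, the hypothetical witnesses $k'$ and $k''$ of nonlinearity are two distinct positive indices in this interval (with $k'<\ell<k''$), so one needs $a-1\ge 2$, that is, $a\ge 3$. Symmetrically, $\ell'$ and $\ell''$ are two distinct negative indices in this interval (with $\ell'<\ell<\ell''$), forcing $b-1\ge 2$, i.e., $b\ge 3$. Consequently, if either $a\le 2$ or $b\le 2$, the four required indices cannot all exist, so the nonlinearity criterion of Theorem \ref{thm: nonexistence of Z} fails for every admissible essential pair $(k,\ell)$. Hence every maximal stable set $\cS_{k\ell}$ is linear.

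There is no real obstacle here; the statement is an immediate counting corollary of Theorem \ref{thm: nonexistence of Z}. The only thing to be careful about is remembering that $k$ and $\ell$ themselves are, respectively, the one positive and one negative vertex already accounted for in the pattern \eqref{max pattern of points on critical line}, so the ``extra'' positive/negative witnesses demanded by the nonlinearity condition must come from the remaining $a-1$ positive and $b-1$ negative signs.
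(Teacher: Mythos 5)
Your proof is correct and is exactly the intended argument: the paper states this corollary with no written proof (just a \qed), treating it as immediate from Theorem \ref{thm: nonexistence of Z}, and your counting of the available positive and negative indices in $(k,k+n)$ (namely $a-1$ positives besides $k$ and $b-1$ negatives besides $\ell$) is precisely the justification that makes it immediate.
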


\subsection{Nonlinear maximal stable sets}

Let $\cS_{k\ell}$ be one of the maximal stability sets identified as being nonlinear in Theorem \ref{thm: nonexistence of Z}. We will construct a piecewise linear green path $\gamma$ which crosses the walls $D(\beta)$ for $\beta\in \cS_{k\ell}$ and no other walls. This green path will be formed from two linear green paths $\lambda$, $\lambda'$ with the same value at $t=0$: $\lambda(0)=\lambda'(0)$ by
\[
	\gamma(t)=\begin{cases} \lambda(t) & \text{if } t\le 0\\
   \lambda'(t) & \text{if } t>0
    \end{cases}
\]
We say that $\gamma$ is given by {\bf splicing together} $\lambda$ and $\lambda'$ at $t=0$.

Since linear green paths are given by $\lambda(t)=t\bbb-\aaa$, two linear green paths take the same value at $t=0$ if and only if they have the same vector $\aaa\in\RR^n$.

\begin{lem}\label{lem: splicing green paths}
Let $\gamma$ be given by splicing together linear green paths $\lambda_Z,\lambda_{Z'}$ for stability functions $Z,Z'$ with the same vector $\aaa$. Suppose that $Z,Z'$ have no semi-stable modules of slope $0$. Then the set of $\gamma$-stable, resp. $\gamma$-semistable, modules is the union of the following two sets.
\begin{enumerate}
\item The set of $Z$-stable, resp. $Z$-semistable, modules $M$ with negative slope.
\item The set of $Z'$-stable, resp. $Z'$-semistable, modules $N$ with positive slope.
\end{enumerate}
\end{lem}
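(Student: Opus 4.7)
The plan is to apply Proposition \ref{prop: relation between Z and D(M)} separately to each half of $\gamma$, after first verifying that $\gamma$ (suitably smoothed near $t=0$) is a genuine green path and that the transition point contributes no wall crossings.

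First I would check that $\gamma$ can be taken to be a green path. The hypothesis that $Z$ and $Z'$ have no semistable modules of slope $0$ says, by Proposition \ref{prop: relation between Z and D(M)}, that neither $\lambda_Z$ nor $\lambda_{Z'}$ meets any wall $D(M)$ at time $t=0$; equivalently, the common point $\lambda_Z(0)=\lambda_{Z'}(0)=-\aaa$ lies on no wall. Hence there is an open neighborhood $U$ of $t=0$ in which $\gamma(U)$ avoids every $D(M)$, and on $U$ we may replace $\gamma$ by a $C^1$-smoothing that still agrees with $\lambda_Z$ to the left and $\lambda_{Z'}$ to the right of $U$. Away from $U$, the velocity of $\gamma$ is either $\bbb$ or $\bbb'$, both of which have strictly positive coordinates, so every wall crossing is in the green direction (since $\undim M$ has nonnegative entries, not all zero). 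Thus conditions (1)--(3) of Definition \ref{def: green path} hold.

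Next, I would characterize the $\gamma$-(semi)stable pairs. Suppose $(M,t_0)$ is $\gamma$-(semi)stable, i.e.\ $\gamma(t_0)\in D(M)$ (resp.\ $\text{int}D(M)$). By the previous paragraph, $t_0\ne 0$. If $t_0<0$ then $\gamma(t_0)=\lambda_Z(t_0)$, so $\lambda_Z$ crosses $D(M)$ (resp.\ $\text{int}D(M)$) at time $t_0$; by Proposition \ref{prop: relation between Z and D(M)} this means $M$ is $Z$-semistable (resp.\ $Z$-stable) with slope $\sigma_Z(M)=t_0<0$. If instead $t_0>0$, the same argument applied to $\lambda_{Z'}$ shows $M$ is $Z'$-semistable (resp.\ $Z'$-stable) with slope $\sigma_{Z'}(M)=t_0>0$.

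Conversely, if $M$ is $Z$-(semi)stable with $\sigma_Z(M)<0$, then by Proposition \ref{prop: relation between Z and D(M)} the linear path $\lambda_Z$ meets $D(M)$ (resp.\ $\text{int}D(M)$) at $t_0=\sigma_Z(M)<0$, and since $\gamma$ agrees with $\lambda_Z$ on $(-\infty,0]$, $(M,t_0)$ is $\gamma$-(semi)stable. Symmetrically for the $Z'$ case with positive slope. This establishes both inclusions and proves the lemma. The main (very mild) obstacle is just handling the non-smoothness of $\gamma$ at $t=0$, which the hypothesis on slope $0$ semistability eliminates at once.
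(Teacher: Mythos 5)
Your identification of the $\gamma$-(semi)stable modules is correct, and since the paper states this lemma without proof, your argument is essentially the intended one: split at $t=0$, apply Proposition \ref{prop: relation between Z and D(M)} to each linear piece, and use the slope-$0$ hypothesis to rule out anything happening at the splice point. One thing worth making explicit: the sign of $\sigma_Z(M)$ equals the sign of $\aaa\cdot\undim M$, hence equals the sign of $\sigma_{Z'}(M)$; this is what guarantees that the spliced path meets each hyperplane $H(M)$ exactly once, on the correct side of $t=0$, so that no module can be picked up by the ``wrong'' half of $\gamma$.

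The one step that is not justified is the assertion that some neighborhood $U$ of $t=0$ has $\gamma(U)$ disjoint from every wall. In the affine case there are infinitely many walls $D(M)$, and the fact that $-\aaa$ lies on none of them does not by itself yield an open neighborhood missing all of them: the hyperplanes $H(M_{i,j+pn})$ accumulate onto $H(\eta)$ as $p\to\infty$, and in the normalized situation of Section \ref{chord diagrams for tilde A} the point $-\aaa$ lies on $H(\eta)$. Moreover, even granting such a $U$, a $C^1$-smoothing supported there could a priori enter walls whose hyperplanes pass through the tube around $\gamma(U)$, since there is no uniform lower bound on the distance from the compact set $\gamma(U)$ to infinitely many walls. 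Fortunately none of this is needed: the smoothing serves only to satisfy the literal $C^1$ requirement of Definition \ref{def: green path}, and the paper itself works with piecewise linear green paths (Theorem \ref{thm M2b}). Alternatively, the stability test of Theorem \ref{thm: linear approximation} (equivalently Theorem \ref{thm: characterization of stability using slope}) compares $\gamma$ at the single time $t_0$ with a linear path agreeing with $\gamma(t_0)$, which for $t_0<0$ is $\lambda_Z$ and for $t_0>0$ is $\lambda_{Z'}$, and makes no use of differentiability at $t=0$. I would delete the smoothing step and run the rest of your argument verbatim.
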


\begin{rem}
Having the same vector $a$ means that, in the periodic chord diagrams $\widetilde C(Z),\widetilde C(Z')$, the corresponding dual vertices $p_i,p_i'$ have the same $y$-coordinates. The stable/semistable chords of $\gamma$ are the stable/semistable chords of $\widetilde C(Z)$ of negative slope and the stable/semistable chords of $\widetilde C(Z')$ of positive slope.
\end{rem}

\begin{thm}\label{thm M2b}
Given $\Lambda=K\widetilde A_{ab}^\vare$ and $k<\ell<k+n$ with $\vare_k=+,\vare_\ell=-$ there exists a piecewise linear green path $\gamma$, given by splicing together two linear green paths as described above, so that all $\gamma$-semistable modules are stable and the set of $\gamma$-stable modules is $\cS_{k\ell}$.
\end{thm}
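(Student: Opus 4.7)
The plan is to split into the case where $\cS_{k\ell}$ is already linear (by Theorem \ref{thm: existence of Z}) and the remaining nonlinear case, and in both cases to package the construction as a splicing of two linear green paths at $t=0$.

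If one of the two sign conditions of Theorem \ref{thm: existence of Z} is satisfied, then $\cS_{k\ell}$ is realized by a single linear central charge $Z$. Taking $\lambda=\lambda'=\lambda_Z$, the ``spliced'' path is $\lambda_Z$ itself, and Theorem \ref{thm: Z-stability using periodic stability polygon}, together with a generic perturbation of $\bbb$ preserving $\aaa$, ensures that no chord $V_{ij}$ for $M_{ij}\in\cS_{k\ell}$ passes through an extra dual vertex, so each $\gamma$-semistable module is $\gamma$-stable and the stable set is exactly $\cS_{k\ell}$.

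In the remaining nonlinear case I would construct two linear central charges $Z,Z'$ sharing a common vector $\aaa$ and splice $\lambda_Z,\lambda_{Z'}$ at $t=0$; by Lemma \ref{lem: splicing green paths} the stable modules of the resulting $\gamma$ are the $Z$-stable modules of negative slope together with the $Z'$-stable modules of positive slope. First choose $\aaa$ so that the associated heights $y_i=a_1+\cdots+a_i$ realize the critical-line pattern \eqref{max pattern of points on critical line} forced by Theorem \ref{thm M1a}; after the normalization of \S\ref{ss periodic polygon} a chord $V_{ij}$ has slope with the same sign as $y_j-y_i$, and this partitions $\cS_{k\ell}=S_-\sqcup S_+$ into chords going down and chords going up. Now choose $\bbb$ (which controls only the horizontal positions $x_i=b_1+\cdots+b_i$ of the dual vertices, leaving the heights fixed) so that on the negative-slope half of $\widetilde C(Z)$ the dual vertices lie along a slightly concave arc, as in the green curves of Figure \ref{Fig: good chord diagram}; in that half the local sign ordering mimics the first condition of Theorem \ref{thm: existence of Z}, so the proof of that theorem shows every chord of $S_-$ lies in $\widetilde C(Z)$ and is $Z$-stable. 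Choose $\bbb'$ symmetrically for the positive-slope side, realizing every chord of $S_+$ as $Z'$-stable.

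The main obstacle is ensuring that no module outside $\cS_{k\ell}$ is $Z$-stable with negative slope, and likewise for $Z'$ on the positive side, so that splicing does not introduce extra stable modules. For this I would invoke the upper-bound arguments of Section \ref{sec5}: the 2-, 3-, and 4-point lemmas (\ref{2pt lem}, \ref{3pt lem}, \ref{4pt lem}) depend only on the sign ordering of the points on the critical line, and once the common height pattern \eqref{max pattern of points on critical line} is fixed they force any $Z$- or $Z'$-stable module whatsoever to lie in $\cS_{k\ell}$, as in the proof of Theorem \ref{thm: Skell are the longest sets}; in particular no wrong-side stable module can arise. A final genericity choice of $\bbb,\bbb'$ (making no three dual vertices collinear away from the prescribed alignments) upgrades semistable to stable on each chord of $S_\pm$, completing the proof.
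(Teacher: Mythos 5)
Your overall architecture is the same as the paper's: splice two linear central charges $Z,Z'$ sharing the height vector $\aaa$, partition $\cS_{k\ell}$ into negative-slope and positive-slope chords (which is well defined since the sign of the slope of $V_{ij}$ depends only on $\aaa$), realize each half by one of the two charges, and use the critical-line lemmas of Section \ref{sec5} (the heights $f_i(c^0)=y_i$ at $t=c^0=0$ depend only on $\aaa$) to conclude that no module outside $\cS_{k\ell}$ can be stable. That last part is sound and matches the paper's (terser) justification.

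The gap is in the middle step: you assert that a single linear $Z$ with the prescribed heights makes \emph{all} of $S_-$ stable because ``the local sign ordering mimics the first condition of Theorem \ref{thm: existence of Z}.'' That theorem's hypothesis (all negative signs before all positive signs between $k$ and $\ell$, or dually) fails precisely in the nonlinear cases you are treating, and it is a statement about making all of $\cS_{k\ell}$ stable, not about making the negative-slope half stable without any sign hypothesis. Your intuition is right that the obstruction of Theorem \ref{thm: nonexistence of Z} pairs a positive-slope chord with a negative-slope chord (e.g.\ $V_{03}$ and $V_{14}$ in the $\widetilde A_{3,3}^{+++---}$ example), so restricting to one slope sign evades the \emph{known} obstruction; but evading the known obstruction is not a construction. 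The paper closes exactly this point with explicit coordinates (Figure \ref{Fig: nonlinear periodic stability polygon for Skl}): the negative-slope chords of $\cS_{k\ell}$ are split into those joining two points of $A\cup B\cup\{p_\ell,p_{k+n}\}$, which lie in a convex region (verified by comparing the slopes $-4/3$ and $-4$ of the bounding chords), and the chords $V_{ij}$ with $k<i<j<\ell$, $\vare_i=+$, $\vare_j=-$, which lie in a narrow vertical strip; both families are then checked to be $Z$-stable. Some such concrete verification (or at least a stated lemma with proof) is needed where you currently have an analogy; a ``slightly concave arc'' of dual vertices does not by itself guarantee that the long chords from the high positive vertices down to $p_\ell$, $p_{k+n}$ and the low negative vertices clear all intermediate vertices on the correct sides.
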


\begin{proof}
We will construct the periodic stability polygons $\widetilde C(Z),\widetilde C(Z')$ having the required properties. The functions $Z,Z'$ are determined by the coordinates of the dual vertices $p_i$, $p_i'$ of $\widetilde C(Z),\widetilde C(Z')$, resp. As required, $p_k,p_\ell$ will have $y$-coodinates $y_k<y_\ell$. Then all modules of length $\ge 2n$ will be unstable. So, the set of stable objects is finite and cannot be it greater than $\cS_{k\ell}$. We assume that $p_{k-1},p_{k+1}$ are positive and $p_{\ell-1},p_{\ell+1}$ are negative. There is no loss of generality since, by the Deletion Lemma \ref{deletion lemma}, we can delete these dual vertices and the remaining objects in $\cS_{k\ell}$ will remain stable and give the set $\cS_{k\ell}$ for the smaller quiver.

The dual vertices of $\widetilde C(Z)$ are given as follows (See Figure \ref{Fig: nonlinear periodic stability polygon for Skl}):
\begin{enumerate}
\item $p_k=(-14,-1)$ and $p_{k+n}=(26,-1)$. Thus $p_{i+n}=p_i+(40,0)$ for all $i\in\ZZ$.
\item $p_\ell=(-5,1)$ and $p_{\ell+n}=(35,1)$.
\item All positive dual vertices $p_i$ for $k<i<k+n$ lie on a parabolic curve from $p_{k+1}=(-10,21)$ to $p_{k+n-1}=(11,19)$ which is concave down with slope between 0 and -1 at all points. Also, the $x$-coordinates of these $p_i$ lie in $[-10,-9]\cup [10,11]$.
\item All negative dual vertices $p_j$ for $\ell<j<\ell+n$ lie on a parabolic curve from $p_{\ell+1}=(10,-19)$ to $p_{\ell+n-1}=(31,-21)$ which is concave up with slope between 0 and -1 at all points. Also, the $x$-coordinates of these $p_j$ lie in $[10,11]\cup [30,31]$.
\end{enumerate}
Let $A,B$ be the sets of positive and negative dual vertices listed in (3),(4) above.
The parallel chords $V_{\ell, \ell+1}$ and $V_{k+n-1,k+n}$ have slope $-4/3$ which is $<-1$ and greater than the slopes of $V_{k+1,\ell},V_{k+n,\ell+n-1}$ which are both $-4$. Therefore, all chords between any two elements of the set $A\cup B\cup \{p_\ell,p_{k+n}\}$ are $Z$-stable. These lie in the shaded region of the left hand diagram in Figure \ref{Fig: nonlinear periodic stability polygon for Skl}. All chords of $\cS_{k\ell}$ of negative slope lie either in this set or have the form $V_{ij}$ where $k<i<j<\ell$, $\vare_i=+,\vare_j=-$. These lie in the yellow strip in the left side of Figure \ref{Fig: nonlinear periodic stability polygon for Skl}. Thus, these are also $Z$-stable. Thus, all chords in $\cS_{k\ell}$ with negative slope are $Z$-stable.

The dual vertices of $\widetilde C(Z')$ are the same as those of $\widetilde C(Z')$ except that $p_k$ moves 10 units left and $p_\ell$ moves 10 units right (so $p_k'=(-24,-1)$ and $p_\ell'=(5,1)$). Let $B'$ be the set of negative dual vertices $p_j'=p_j$ with $\ell-n<j<\ell$. Then, all chords between any two points in $A\cup B'\cup \{p_k,p_\ell\}$ lie in the convex shaded region in $\widetilde C(Z')$ in Figure \ref{Fig: nonlinear periodic stability polygon for Skl}. Together with the positive sloped chords $V_{ij}$ for $\ell<i<j<k+n$ with $\vare_i=-,\vare_j=+$, these are all the positive sloped chords in $\cS_{k\ell}$.

Finally, note that no stable chords of $Z$ or $Z'$ are horizontal. By Lemma \ref{lem: splicing green paths}, the green path obtained by splicing $\lambda_Z$ and $\lambda_{Z'}$ makes all elements of $\cS_{k\ell}$ stable.
\end{proof}

\begin{figure}[htbp]
\begin{center}
\begin{tikzpicture}[scale=.9]
\coordinate (P0) at (-3,-1.9);
\coordinate (P4) at (-.9,-2.1);
\coordinate (P04a) at (-2.2,-2.1);
\coordinate (P04b) at (-1.8,-2.1);
\coordinate (P04c) at (-2,-2); 
\coordinate (P6) at (1,-1.9);
\coordinate (P10) at (3.1,-2.1);
\coordinate (P60a) at (1.8,-2.1);
\coordinate (P60b) at (2.2,-2.1);
\coordinate (P60c) at (2,-2); 
\coordinate (P1) at (-2.9,1.9);
\coordinate (P2) at (-1.4,-.1); 
\coordinate (P2p) at (-2.4,-.1); 
\coordinate (P8) at (2.6,-.1); 
\coordinate (P8p) at (1.6,-.1); 
\coordinate (P3) at (-1,2.1);
\coordinate (P7) at (1.1,1.9);
\coordinate (P37a) at (-.2,2.1);
\coordinate (P37b) at (.2,2.1);
\coordinate (P37c) at (0,2); 
\coordinate (P3L) at (-1,-2.1);
\coordinate (P9) at (3,2.1);
\coordinate (P4U) at (-.9,2.1);
\coordinate (P5) at (-.5,.1); 
\coordinate (P5p) at (.5,.1); 
\coordinate (P11) at (11,1.2);
{
{
\draw[fill,color=yellow] (P3)--(P3L)--(P4)--(P4U)--(P3);
\draw (P3)--(P3L) (P4)--(P4U);
\draw[fill,color=gray!20!white] (P3)--(P5)--(P6)..controls (P60a) and (P60b)..(P10)--(P8)--(P7)..controls (P37b) and (P37a)..(P3);
\draw (-3,-.3) node{$\widetilde C(Z)$};
\draw[color=red!80!black,fill] (P0) circle[radius=2.5pt] node[below]{$p_{\ell-n+1}$};
\draw[color=red!80!black,fill] (P4) circle[radius=2.5pt] node[below]{$p_{\ell-1}$};
\draw[color=red!80!black,fill] (P5) circle[radius=2.5pt] node[above]{$p_\ell$};
\draw[color=red!80!black,fill] (P6) circle[radius=2.5pt]node[below]{$p_{\ell+1}$};
\draw[color=red!80!black,fill] (P10) circle[radius=2.5pt]node[below]{$p_{\ell+n-1}$};
\draw[color=blue,fill] (P1) circle[radius=2.5pt] node[above]{$p_{k-1}$};
\draw[color=blue,fill] (P2) circle[radius=2.5pt] node[below]{$p_k$};
\draw[color=blue,fill] (P3) circle[radius=2.5pt] node[above]{$p_{k+1}$};
\draw[color=blue,fill] (P7) circle[radius=2.5pt] node[above]{$p_{k+n-1}$};
\draw[color=blue,fill] (P8) circle[radius=2.5pt] node[right]{$p_{k+n}$};
\draw[color=blue,fill] (P9) circle[radius=2.5pt] node[above]{$p_{k+n+1}$};
\draw[very thick,color=blue] (P1)--(P2)--(P3)..controls (P37a) and (P37b)..(P7)--(P8)--(P9) (P37c)node[below]{$A$};
\draw[very thick,color=red!80!black] (P0)..controls (P04a) and (P04b)..(P4)--(P5)--(P6)..controls (P60a) and (P60b)..(P10) (P60c)node[above]{$B$};
}
\begin{scope}[xshift=8.3cm]
\draw[fill,color=yellow] (1,-1.9)--(1,1.9)--(1.1,1.9)--(1.1,-1.9)--(1,-1.9);
\draw (1,-1.9)--(1,1.9) (1.1,1.9)--(1.1,-1.9);
\draw[fill,color=gray!20!white] (-3,-1.9)--(-2.4,-.1)--(-1,2.1)..controls (-.2,2.1) and (.2,2.1)..(1.1,1.9)--(.5,.1)--(-.9,-2.1)..controls (-1.8,-2.1) and (-2.2,-2.1)..(-3,-1.9);
\draw (3,-.3) node{$\widetilde C(Z')$};
\draw[color=red!80!black,fill] (-3,-1.9) circle[radius=2.5pt] node[below]{$p_{\ell-n+1}$};
\draw[color=red!80!black,fill] (-.9,-2.1) circle[radius=2.5pt] node[below]{$p_{\ell-1}$};
\draw[color=red!80!black,fill] (.5,.1) circle[radius=2.5pt] node[above]{$p_\ell'$};
\draw[color=red!80!black,fill] (1,-1.9) circle[radius=2.5pt]node[below]{$p_{\ell+1}$};
\draw[color=red!80!black,fill] (3.1,-2.1) circle[radius=2.5pt]node[below]{$p_{\ell+n-1}$};
\draw[color=blue,fill] (-2.9,1.9) circle[radius=2.5pt] node[above]{$p_{k-1}$};
\draw[color=blue,fill] (-2.4,-.1) circle[radius=2.5pt] node[left]{$p_k'$};
\draw[color=blue,fill] (-1,2.1) circle[radius=2.5pt] node[above]{$p_{k+1}$};
\draw[color=blue,fill] (1.1,1.9) circle[radius=2.5pt] node[above]{$p_{k+n-1}$};
\draw[color=blue,fill] (1.6,-.1) circle[radius=2.5pt] node[below]{$p_{k+n}'$};
\draw[color=blue,fill] (3,2.1) circle[radius=2.5pt] node[above]{$p_{k+n+1}$};
\draw[very thick,color=blue] (-2.9,1.9)--(-2.4,-.1)--(-1,2.1)..controls (-.2,2.1) and (.2,2.1)..(1.1,1.9)--(1.6,-.1)--(3,2.1) (-.1,2)node[below]{$A$};
\draw[very thick,color=red!80!black] (-3,-1.9)..controls (-2.2,-2.1) and (-1.8,-2.1)..(-.9,-2.1)--(.5,.1)--(1,-1.9)..controls (1.8,-2.1) and (2.2,-2.1)..(3.1,-2.1) (-1.8,-2)node[above]{$B'$};
\end{scope}
}
\end{tikzpicture}
\caption{Since each of the shaded (gray) regions is convex, the $\binom n2$ chords in each shaded region are stable. All chords of negative slope in $\cS_{k\ell}$ are in $\widetilde C(Z)$ either in the shaded region or in the yellow strip, both convex. Similarly, all chords in $\cS_{k\ell}$ of positive slope are in the shaded region or yellow strip in $\widetilde C(Z')$. ``Splicing'' these makes all chords in $\cS_{k\ell}$ stable.}
\label{Fig: nonlinear periodic stability polygon for Skl}
\end{center}
\end{figure}
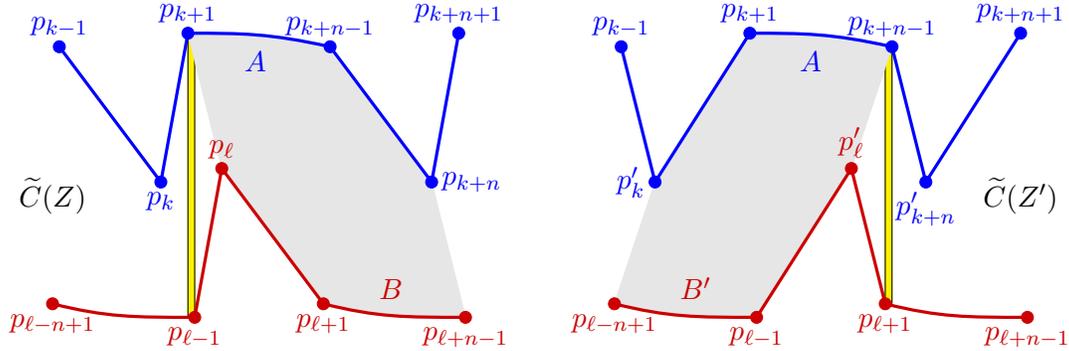

Combining Theorems \ref{thm M2b}, \ref{thm M2a} we get the following.

\begin{cor}\label{cor M2}
Given $\Lambda=K\widetilde A_{ab}^\vare$ and $k<\ell<k+n$ with $\vare_k=+,\vare_\ell=-$ there exists a maximal green sequence with stable module set equal to $\cS_{k\ell}$. This set has $\binom{a+b}2+ab$ elements. These are all the sets of stable modules of a maximal green sequence of this length and there are no maximal green sequences of greater length.
\end{cor}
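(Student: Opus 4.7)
The plan is to observe that this corollary is essentially a summary of three results already established earlier in the section: the upper bound Theorem \ref{thm M1a}, the characterization Theorem \ref{thm M2a}, and the existence Theorem \ref{thm M2b}. So the proof is mainly a matter of assembling these and verifying the cardinality count.

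First I would invoke Theorem \ref{thm M2b} directly to produce, for each pair $(k,\ell)$ with $k<\ell<k+n$, $\vare_k=+$, $\vare_\ell=-$, a piecewise linear green path $\gamma$ whose set of $\gamma$-stable modules (which equals the $\gamma$-semistable set) is exactly $\cS_{k\ell}$. Since that stable set is finite, $\gamma$ is a finite green path, so the modules listed in $\cS_{k\ell}$, ordered by their crossing times $t_M$, form a maximal green sequence. This settles existence.

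Next I would compute $|\cS_{k\ell}|$ from Definition \ref{def: Skell}. Since $|A_{k\ell}|=a$ and $|B_{k\ell}|=b$, types (1) and (2) contribute $\binom a2$ and $\binom b2$ respectively, and types (3) and (4) each contribute $ab$ (they yield distinct modules because $j\neq j-n$, so the two indexings by $(i,j)$ with $j\in A_{k\ell}$, $i\in B_{k\ell}$ never collide). Hence
\[
|\cS_{k\ell}|=\binom a2+\binom b2+2ab=\binom{a+b}2+ab,
\]
using $\binom{a+b}2=\binom a2+\binom b2+ab$.

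Finally, to see that no maximal green sequence is longer than this and that every maximal green sequence of this length has stable set of the form $\cS_{k\ell}$, I would quote Theorem \ref{thm M1a} (upper bound on length) and Theorem \ref{thm M2a} (identification of the stable set of any length-$\binom{a+b}{2}+ab$ MGS with some $\cS_{k\ell}$). There is no real obstacle here: all the actual content, including the delicate chord-diagram construction in Figure \ref{Fig: nonlinear periodic stability polygon for Skl} that drives Theorem \ref{thm M2b}, the 2-, 3-, and 4-point lemmas feeding Theorem \ref{thm M1a}, and the bookkeeping in Theorem \ref{thm M2a}, has already been carried out. The corollary is a direct packaging of those ingredients, so the proof amounts to the cardinality check above plus citation of the three theorems.
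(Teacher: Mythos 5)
Your proposal is correct and matches the paper exactly: the paper derives this corollary by simply combining Theorem \ref{thm M2b} (existence of a green path realizing $\cS_{k\ell}$) with Theorems \ref{thm M1a} and \ref{thm M2a} (upper bound and classification), and your cardinality count $\binom a2+\binom b2+2ab=\binom{a+b}2+ab$ is the right bookkeeping for Definition \ref{def: Skell}.
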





For the cluster-tilted algebra $\Lambda_n=J(Q_n,W)=KQ_n/rad^{n-1}$, the linear maximal green sequence of Corollary \ref{cor L4, proved} is illustrated in Figure \ref{fig: C(Z) under parabola} in the case $n=5, k=1$.

\begin{figure}[htbp]
\begin{center}
\begin{tikzpicture}[scale=.6]
\coordinate (n3L) at (-3,-3);
\coordinate (n3) at (-3,-.5);
\coordinate (n2) at (-2,-2);
\coordinate (n1) at (-1,-.5); 
\coordinate (c0) at (0,0);
\coordinate (c1) at (1,0);
\coordinate (c2) at (2,-.5); 
\coordinate (c3) at (3,-2);
\coordinate (c4) at (4,-.5);
\coordinate (c5) at (5,0);
\coordinate (c5L) at (5,-3);

\draw[fill,color=gray!20!white] (n3L)--(n3)--(n2)--(n1)--(c0)--(c1)--(c2)--(c3)--(c4)--(c5)--(c5L)--(n3L);
\draw[very thick,color=blue] (n3)--(n2)--(n1)--(c0)--(c1)--(c2)--(c3)--(c4)--(c5);
\foreach\x in {n3,n2,n1,c0,c1,c2,c3,c4,c5} 
\draw[fill] (\x) circle [radius=3pt];
\draw (n3) node[left]{$p_0$};
\draw (n2) node[below]{$p_1$}; 
\draw (n1) node[above]{$p_2$};
\draw (c0) node[above]{$p_3$};
\draw (c1) node[above]{$p_4$};
\draw (c2) node[above]{$p_5$};
\draw (c3) node[below]{$p_6$}; 
\draw (c4) node[above]{$p_7$};
\draw (c5) node[above]{$p_8$};
\draw (n1)--(c1)--(c3) (n2)--(c0)--(c2) (n2)--(c1) (n1)--(c2) (n2)--(c2) (n1)--(c3) (c0)--(c3);
\end{tikzpicture}
\caption{The $\binom n2+n-1=14$ chords $V_{ij}$ for $1\le i<j\le 6$ are stable.}
\label{fig: C(Z) under parabola}
\end{center}
\end{figure}
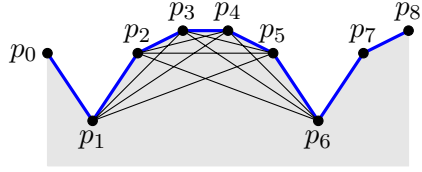
%

%


%
%

\setcounter{section}6

\section{Summary of definitions}\label{sec7}

We summarize the definitions and constructions for \emph{linear stability conditions}. We summarize the changes in the nonlinear case at the end.

\begin{description}
\item[$\bf Z$] The {\bf central charge} (\ref{central charge}) $Z:K_0\Lambda\to \CC$ is given by 
\[
	Z(x)=\aaa\cdot x+i\bbb\cdot x= r(x)e^{i\theta(x)}.
\]
The {\bf standard value} of $\bbb$ is $\bbb=(1,1,\cdots,1)$.
\item[$\bf\sigma_Z$] The {\bf slope} of a module $M$ is
\[
\begin{tikzpicture}
\coordinate (A) at (0,-.5);
\coordinate (A1) at (0.7,-.5);
\coordinate (AB) at (1,-.5);
\coordinate (B) at (2,-.5);
\coordinate (BC) at (2,.2);
\coordinate (C) at (2,1);
\coordinate (D) at (-4,0);
\begin{scope}
\draw (D) node{\large$\sigma_Z(M)=\displaystyle\frac{\aaa\cdot \undim M}{\bbb\cdot \undim M}=\cot\theta(M) $};
\end{scope}
	\draw[thick] (A) --(B)--(C)--(A);
	\draw (AB) node[below]{$\aaa\cdot \undim M$};
	\draw (BC) node[right]{$\bbb\cdot \undim M$};
	\draw (A1) node[above]{$\theta$};
\end{tikzpicture}
\]
\item[$\bf\lambda_Z$] The associated {\bf linear green path} $\lambda_Z:\RR\to \RR^n$ is given by
\[
	\lambda_Z(t)=t\bbb-\aaa.
\]
\item[$\bf D(M)$] The {\bf wall} or {\bf semistability set} for a module $M$ (\ref{ss set}) is 
\[
	D(M):=\{
	x\in\RR^n\,:\, x\cdot \undim M=0,\,x\cdot\undim M'\le0\ \text{for all } M'\subseteq M
	\}.
\]
\end{description}

In the special case when $\Lambda=KA_n^\vare$ is the path algebra of a quiver of type $A_n$:
\begin{description}
\item[$\bf\varepsilon$] The {\bf sign function} is a mapping $\varepsilon:[0,n]\to \{+,0,-\}$, written $\vare(i)=\vare_i$, so that $\varepsilon_i=0$ iff $i=0$ or $n$.
\item[$\bf A_n^\varepsilon$] The quiver of type $A_n$ with vertices $1,2,\cdots,n$, arrows $i\to i+1$ for $\vare_i=-$ and arrows $i\leftarrow i+1$ when $\vare_i=+$. $\vare_0=\vare_n=0$ are dropped from the notation, e.g.,
\[
	A_3^{-+}:1\to 2\leftarrow 3.
\]
\item[$\bf M_{ij}$] The indecomposable module with support at vertices $i+1,\cdots,j$.
\item[$\bf \beta_{ij}$] $=\undim M_{ij}=e_{i+1}+e_{i+2}+\cdots+e_j$.
\item[$\bf p_i$] The {\bf dual vertices} (\ref{dual vertices}) of the quiver, $p_0,\cdots,p_n$, are
\[
	p_i=(b_1+\cdots+b_i,a_1+\cdots+a_i)\in \RR^2.
\]
\item[$\bf C(Z)$] The {\bf stability polygon} of $Z$ has vertices $p_i$.
\item[$\bf V_{ij}$] The {\bf chords} are $V_{ij}=\overline{p_ip_j}$.
\item[$\bf f_i$]$\RR\to \RR$ are the linear functions (\ref{sec2}) given by
\[
	f_i(t)=t(m-b_1-\cdots-b_i)+a_1+\cdots+a_i
\]
where $m$ is any convenient real number. 
\item[$\bf L_i$] The {\bf wires} $L_i\subseteq\RR^2$ are the graphs of the linear functions $f_i$.
\item[$\bf t_{ij}$] This is the $x$-coordinate of the {\bf intersection point} $L_i\cap L_j$. Also, 
\[
t_{ij}=\sigma_Z(M_{ij})=\text{slope of } V_{ij}.
\]
This is independent of the choice of $m$.
\end{description}

For $\Lambda=\widetilde A_{ab}^\vare$ (\ref{ss affine A}) we have:

\begin{description}
\item[$\bf \widetilde A_{ab}^\vare$] The quiver with vertices $1,2,\cdots,n$, taken to be modulo $n$, with $a$ arrows $k\leftarrow k+1$ for $\vare(k)=+$ and $b$ arrows $\ell\to \ell+1$ for $\vare(\ell)=-$. Equivalently, this is an $n$-periodic quiver with vertex set $\ZZ$.
\item[$\bf Q_n$] $=``\widetilde A_{n0}$" is a single oriented cycle of length $n$ modulo $rad^{n-1}$. I.e., the composition of any $n-1$ arrows is zero. This is cluster-tilted of type $D_n$ for $n\ge4$.
\item[$\bf\vare$] The {\bf periodic sign function} $\vare:\ZZ\to \{+,-\}$ so that $\vare_i=\vare_{i+n}$ and $\vare(k)=+$ for $a$ values of $k$ in $[1,n]$ and $\vare(\ell)=-$ for $b$ values of $\ell$ in $[1,n]$.
\item[$\bf \eta$] The {\bf null root} $\eta=(1,1,\cdots,1)$.
\item[$\bf c$] $\bf=t_{0n}$, the {\bf critical slope} (\ref{ss critical}), is the slope of the null root
\[
	c=\sigma_Z(\eta)=\frac{a_1+\cdots+a_n}{b_1+\cdots+b_n}=\text{slope of } V_{0n}=\text{$x$-coordinate of } L_0\cap L_n
\]
\item[$\bf  p_i, V_{ij},f_i,t_{ij}$] {\bf Periodic} version of these are defined for all $i<j\in\ZZ$ with the same formulas as in the finite $A_n$ case.
\item[$\bf \widetilde C(Z)$] The {\bf periodic stability polygon} (\ref{ss periodic polygon}) has $p_i$ the periodic dual vertices as vertices.
\end{description}

In the nonlinear case we make the following modifications:
\begin{description}
\item[$\bf a$] $\RR\to \RR^n$, ${\bf b:}\RR\to (0,\infty)^n$ are $C^1$ curves with velocity $\aaa'(t)=\bbb'(t)=0$ for $|t|$ large.
\item[$\bf f_i$] $\RR\to \RR$ are the smooth functions given by
\[
	f_i(t)=t(m-b_1(t)-\cdots-b_i(t))+a_1(t)+\cdots+a_i(t)
\]
where $m$ is any convenient fixed real number.
\item[$\bf L_i$] $\subseteq\RR^2$ are the graphs (now smooth curves) of the functions $f_i:\RR\to \RR$.
\item[$\bf T_{ij}$] The set of $x$-coordinates $t_{ij}$ of the elements of $L_i\cap L_j\subset \RR^2$.
\item[$\bf t^0_{ij}, t_{ij}^1$] The minimum and maximum values of $t_{ij}$.
\item[$\bf c^0=t^0_{0n}$] The (smallest) critical slope.
\item[$\bf \cS_{k\ell}$] The nonlinear stability sets of maximum size (\ref{def: Skell}) for any $k<\ell<k+n$ with $\vare_k=+,\vare_\ell=-$ consisting of: 
\begin{enumerate}
\item $M_{ij}$ for all $i,j$ in the set $A$ of size $a$ consisting of $\ell$ and all $k<t<k+n$ with sign $\vare_t=+$
\item $M_{ij}$ for all $i,j$ in the set $B$ of size $b$ consisting of $k$ and all $\ell-n<s<\ell$ with sign $\vare_s=-$
\item $M_{ij}$ and $M_{j,i+n}$ for all $i\in B,j\in A$.
\end{enumerate}
\end{description}

The main result of this paper is to show that the sets $\cS_{k\ell}$ are the only stability sets of the maximum size $\binom a2+\binom b2+2ab$ and to determine which are given by linear stability conditions. For $(a,b)\neq (2,2)$ the $ab$ sets $\cS_{k\ell}$ are distinct.

The three kinds of diagrams used in this paper are based on the following theorem proved in the text.

\begin{thm} Let $\Lambda$ be $KA_n^\vare$, $K\widetilde A_{ab}^\vare$ or $KQ_n/rad^{n-1}$. For a rigid indecomposable $\Lambda$-module $M=M_{ij}$ and central charge $Z:K_0\Lambda\to\CC$, the following are equivalent.
\begin{enumerate}
\item[(0)] {\bf definition} (\ref{def: semistable}): $M_{ij}$ is {\bf$Z$-semistable}, i.e., $
	\sigma_Z(M')\ge \sigma_Z(M)$ for all $M'\subseteq M$
\item {\bf wall crossing} (\ref{prop: relation between Z and D(M)}): $\gamma_Z$ goes through $D(M_{ij})$, i.e., $\gamma_Z(t)\in D(M_{ij})$ for some $t\in\RR$.
\item {\bf chord diagram} (\ref{sec1}, \ref{sec6}): For all $i<k,\ell<j$ with $\vare_k=+,\vare_\ell=-$, the point $p_k$ is above or on the chord $V_{ij}$ and the point $p_\ell$ is below or on $V_{ij}$.
\item {\bf wire diagram} (\ref{sec2}, \ref{sec4}): For all $i<k,\ell<j$ with $\vare_k=+,\vare_\ell=-$, 
\[
	f_\ell(t_{ij})\le f_i(t_{ij})= f_j(t_{ij})\le f_k(t_{ij})
\]
i.e., the intersection point $L_i\cap L_j$ is on or below $L_k$ and on or above $L_\ell$.
\end{enumerate}
Furthermore, $t$ in $(1)$ is equal to $\sigma_Z(M_{ij})=t_{ij}$ in $(3)$ is equal to the slope of $V_{ij}$ in $(2)$.
\end{thm}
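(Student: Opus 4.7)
The plan is to prove the four-way equivalence by reducing it to results already established in the paper, handling each implication separately rather than going around a single loop. The equivalence $(0)\Leftrightarrow(1)$ has already been done in full generality for any finite dimensional $\Lambda$, so I would simply cite Proposition \ref{prop: relation between Z and D(M)}: it says that $\lambda_Z=\gamma_Z$ meets $D(M)$ precisely at time $t=\sigma_Z(M)$, and that this happens iff $M$ is $Z$-semistable. This step is uniform across all three algebra classes and needs no case analysis.

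For $(0)\Leftrightarrow(2)$ I would split into the three families. In the $KA_n^\vare$ case this is exactly Theorem \ref{thm: stability of chords} (the semistable version, which is noted to follow by the same argument). In the $K\widetilde A_{ab}^\vare$ case this is Theorem \ref{thm: Z-stability in terms of chords}, phrased in the ``no negative vertex above, no positive vertex below'' form that matches condition $(2)$. For $KQ_n/rad^{n-1}$ the only candidate submodules of $M_{ij}$ (with $j-i<n$) are again of the form $M_{ik}$, $M_{\ell j}$, and $M_{\ell k}$ with $\ell<k$ indexed cyclically, so the same ``slope comparison'' argument used in the $A_n$ proof applies verbatim, and moreover one can invoke the Deletion Lemma \ref{deletion lemma} to collapse a $\widetilde A_{n,1}$ diagram down to $Q_n$ and transport the equivalence from the affine case.

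For $(0)\Leftrightarrow(3)$ I would again go by family: Theorem \ref{thm: stability for linear wires} in the $A_n$ case and Theorem \ref{thm: stable iff blue above and red below} (via Corollary \ref{cor: stability in terms of chords Vij}) in the $\widetilde A_{ab}$ case. In both, the key identity $t_{ij}=\sigma_Z(M_{ij})=$ slope of $V_{ij}$ is precisely what translates the ``above/below the chord'' condition of $(2)$ into the ``$f_k(t_{ij})\ge f_i(t_{ij})=f_j(t_{ij})\ge f_\ell(t_{ij})$'' condition of $(3)$. Since $Z$ is a fixed linear stability condition here, the functions $f_i$ are linear and $t_{ij}$ is unique, so one does not need the more delicate nonlinear theorems \ref{thm: 4lines}. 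The $KQ_n/rad^{n-1}$ subcase again follows either by direct repetition of the $A_n$ argument or by Deletion from $\widetilde A_{n,1}$.

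The one mild obstacle is bookkeeping: the theorem as stated asserts semistability but also records the equality $t=\sigma_Z(M_{ij})=$ slope of $V_{ij}$ (note the paper's numbering $(1),(2),(3)$ in the last sentence is cross-referenced to the previously labeled items). I would close with the one-line observation that the unique $t$ for which $\lambda_Z(t)\in H(M_{ij})$ solves $t\bbb\cdot \undim M_{ij}=\aaa\cdot \undim M_{ij}$, giving $t=\sigma_Z(M_{ij})$, which by Lemma \ref{lem: Tij=gamma-1 H(Mij)} (and its $A_n$ analogue from Section \ref{sec2}) equals $t_{ij}$ and, by the formulae in Section \ref{sec1} for $p_i$, equals the slope of $V_{ij}$. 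With that the proof is just an organized appeal to the cited results.
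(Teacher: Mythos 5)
Your proposal is correct and matches the paper's own treatment: the theorem is stated as a summary ``proved in the text,'' and the intended proof is exactly the assembly you describe --- Proposition \ref{prop: relation between Z and D(M)} for $(0)\Leftrightarrow(1)$, Theorems \ref{thm: stability of chords} and \ref{thm: Z-stability in terms of chords} for $(2)$, Theorems \ref{thm: stability for linear wires} and \ref{thm: stable iff blue above and red below} for $(3)$, with the $KQ_n/rad^{n-1}$ case handled by the paper's own remark that the arguments carry over verbatim. The only cosmetic point is that several cited results are phrased for stability (strict inequalities) while the summary theorem asserts semistability, but the weak-inequality versions follow by the same arguments, as the paper itself notes.
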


The relation between chord diagrams and mixed cobinary trees is explained in \cite[Sec 8]{IO}. This will be extended to periodic trees in a revised version of \cite{ITW}. We also expect that $m$-noncrossing trees \cite{mtrees} and periodic versions of such trees can be used to find the maximal lengths of $m$-maximal green sequences using \cite{IZ}.

%
%

%

\end{document}
